\documentclass[12pt]{article}
%%%%%%%%%%%%%%%%%%%%%%%%%%%%%%%%%%%%%%%%%%%%%%%%%%%%%%%%%%%%%%%%%%%%%%%%%%%%%%%%%%%%%%%%%%%%%%%%%%%%%%%%%%%%%%%%%%%%%%%%%%%%%%%%%%%%%%%%%%%%%%%%%%%%%%%%%%%%%%%%%%%%%%%%%%%%%%%%%%%%%%%%%%%%%%%%%%%%%%%%%%%%%%%%%%%%%%%%%%%%%%%%%%%%%%%%%%%%%%%%%%%%%%%%%%%%
\usepackage{amsthm,amstext}
\usepackage{amsmath}
\usepackage{graphicx}
\usepackage{amsfonts}
\usepackage{amssymb}

\setcounter{MaxMatrixCols}{10}
%TCIDATA{OutputFilter=Latex.dll}
%TCIDATA{Version=5.50.0.2953}
%TCIDATA{<META NAME="SaveForMode" CONTENT="1">}
%TCIDATA{BibliographyScheme=Manual}
%TCIDATA{LastRevised=Sunday, September 20, 2009 05:30:44}
%TCIDATA{<META NAME="GraphicsSave" CONTENT="32">}

\theoremstyle{plain}

\makeatletter
 \textwidth
14cm \topmargin 0.1cm \textheight 22cm
\newcommand{\singlespacing}{\let\CS=\@currsize\renewcommand{\baselinestretch}{1}\tiny\CS}
\newcommand{\doublespacing}{\let\CS=\@currsize\renewcommand{\baselinestretch}{1}\tiny\CS}
\doublespacing \singlespacing
\theoremstyle{plain}
\newtheorem{Theorem}{Theorem}[section]
\theoremstyle{definition}
\newtheorem{Definition}[Theorem]{Definition}

\newtheorem{Corollary}[Theorem]{Corollary}

\newtheorem{Lemma}[Theorem]{Lemma}

\newtheorem{Remark}[Theorem]{Remark}

\newtheorem{Proposition}[Theorem]{Proposition}

\begin{document}
%\twocolumn[
%\begin{@twocolumnfalse}
\title{\bf  Fuzzy $h$-ideals of a $\Gamma$-hemiring and its operator hemirings}
\author{ \bf S. K. Sardar$^a$, B. Davvaz$^b$, D. Mandal$^{a,}$\footnote{The research is
funded by CSIR, Govt. of India.} \\ \\
$^a$Department of Mathematics, Jadavpur
University, \\
Kolkata - 700 032, India\\
 sksardarjumath@gmail.com\\
 dmandaljumath@gmail.com
  \\ \\
$^b$Department of Mathematics,
Yazd University, \\
Yazd, Iran\\
 davvaz@yazduni.ac.ir\\ bdavvaz@yahoo.com
}

\date{}
\maketitle
\begin{abstract}
Various correspondence between fuzzy $h$-ideals of a $\Gamma$-hemiring and fuzzy $h$-ideals of its
operator hemirings are established and some of their characterization are given using lattice structure and cartesian
product.\\ \\
\textit{AMS Mathematics Subject Classification (2000):} 16Y60, 16Y99, 03E72.\\ \\
\textit{Keywords and Phrases-}\textit{\ }$\Gamma$-hemiring, cartesian product, fuzzy $h$-ideal ($h$-bi-ideal, $h$-quasi-ideal), operator hemiring.
\end{abstract}
\section{Introduction}
Hemirings \cite{Golan} which provide a common generalization of rings and distributive lattices arise naturally
in such diverse areas of mathematics as combinatorics, functional analysis, graph theory, automata theory,
mathematical modelling and parallel computation systems etc.(for example, see \cite{Golan}, \cite{weinert}). Hemirings
have also been proved to be an important algebraic tool in theoretical computer science, see for instance
\cite{weinert}, for some detail and example.
   Ideals of semiring(hemiring) play a central role in the structure theory and
   useful for many purposes. However they do not in general coincide
   with the usual ring ideals and for this reason, their use is somewhat
    limited in trying to obtain analogues of ring theorems for semiring.
    To solve this problem, Henriksen \cite{Henriksen},
    defined a more restricted class of ideals,
    which are called $k$-ideals. A still more restricted class of
    ideals in hemirings are given by Iizuka \cite{Iizuka}, which are called
     $h$-ideals. LaTorre \cite{LaTorre}, investigated $h$-ideals and $k$-ideals in
      hemirings in an effort to obtain analogues of ring theorems for
      hemiring.
      The theory of $\Gamma$-semiring was introduced by Rao \cite{Rao} as a generalization of semiring.
The notion of  $\Gamma$-semiring theory has been enriched by the
introduction of operator semirings of a $\Gamma$-semiring by Dutta
and Sardar\cite{re:Dutta}. To make operator semirings effective in
the study of $\Gamma$-semirings Dutta et. al. \cite{re:Dutta}
established a correspondence between the ideals of a
$\Gamma$-semiring S and the ideals of the operator semirings of $S$.\\
The concept of fuzzy set was introduced by Zadeh\cite{Zadeh} and
has been applied to many branches of mathematics.  The theory of fuzzy $h$-ideals in hemiring was introduced and studied by Jun et. al. \cite{YBjun}, Zhan et.
al. \cite{Zhan}. As a continuation of this Sardar et al\cite{Sardar} studied those
  properties in $\Gamma$-hemiring in terms of fuzzy $h$-ideals. Recently Ma et. al. \cite{Ma} investigated some properties
  of fuzzy $h$-ideals in $\Gamma$-hemirings. In this paper we establish various correspondence between the fuzzy $h$-ideals
  of a $\Gamma$-hemiring $S$ and the fuzzy $h$-ideals of
the operator hemirings of $S$.
%%%%%%%%%%%%%%%%%%%%%%%%%%%%%%%%%%%%%%%%%%%%%%%%%%%%%%%%%%%%%%%%%%%%%%%%%%%%%%%%%%%%%%%%%%%%%%%%%%%%%%%%%%%%%%%%%%%%%%%%%%%%%%%%%%%%%%%%%%%%%%%%%%%%%%%%

\section{Preliminaries}
We recall the following definition from \cite{Golan}.\\
\indent  A {\it hemiring} (respectively, {\it semiring}) is
a non-empty set $S$ on which operations addition and multiplication
have been defined such that $(S,+)$ is a commutative monoid with
identity 0, $(S,\cdot )$ is a semigroup (respectively, monoid with
identity $1_{S}$), multiplication distributes over addition from
either side, $1_{S}\neq0$ and $0s=0=s0$ for all $s\in S$.

Let $S$ and
$\Gamma$ be two additive commutative semigroups with zero. According to \cite{Sardar}, $S$
is called a {\it $\Gamma$-hemiring} if there exists a mapping $S
\times \Gamma \times S \longrightarrow S $ by $ (a,\alpha,b) \mapsto a\alpha b$
satisfying the following conditions:
\begin{itemize}
\item[(1)]  $(a+b) \alpha c =a \alpha c+ b \alpha c$,
\item[(2)] $ a \alpha
(b+c)= a \alpha b+a \alpha c $,
\item[(3)]  $a (\alpha+\beta)b=a \alpha
b +a \beta b$,
\item[(4)] $ a \alpha (b \beta c)=(a \alpha b) \beta c $,
\item[(5)] $ 0_{S} \alpha a=0_{S} =a \alpha 0_{S},$
\item[(6)] $ a0_{\Gamma}b=0_{S}=b 0_{\Gamma} a$,
\end{itemize}
for all $a,b,c \in S $ and for all $\alpha, \beta \in \Gamma $.

For simplification we write 0 instead of $0_{S}$ and $0_{\Gamma}$.

Let $S$ be the set of all $m \times n$ matrices
over $\mathbf{ Z_{0}^{-}}$ (the set of all non-positive integers)
and $\Gamma$ be the set of all $n \times m$ matrices over
$\mathbf{ Z_{0}^{-}}$. Then $S$ forms a $\Gamma$-hemiring with usual
addition and multiplication of matrices.

Now, we recall the following definitions from \cite{re:Dutta}.

Let $S$ be a $\Gamma$-hemiring and $F$ be the
free additive commutative semigroup generated by $ S \times \Gamma
$. We define a relation $\rho $ on $F$ as follows: $$
\displaystyle{\sum_{i=1}^{m}}(x_{i},\alpha_{i}) \ \rho \
\displaystyle{\sum_{j=1}^{n}}(y_{j}, \beta_{j}) \ {\rm  if \ and \ only \ if} \
\displaystyle{\sum_{i=1}^{m}}x_{i} \alpha_{i} a=
\displaystyle{\sum_{j=1}^{n}}y_{j} \beta_{j} a, $$
 for all $a \in
S$ ($m,n \in \mathbf{Z^{+}}$). Then $\rho$ is a congruence relation on $F$. We denote
the congruence class containing
$\displaystyle{\sum_{i=1}^{m}}(x_{i},\alpha_{i})$ by
$\displaystyle{\sum_{i=1}^{m}}[x_{i},\alpha_{i}]$. Then $F/ \rho$
is an additive commutative semigroup. Now,  $F/ \rho$ forms a
hemiring with the multiplication defined by $$
\displaystyle{\left (\sum^{m}_{i=1}[x_{i},\alpha_{i}]\right )
\left (\sum^{n}_{j=1}[y_{j},\beta_{j}]\right )=\sum_{i,j}[x_{i}\alpha_{i}y_{j},\beta_{j}]}.$$
We denote this hemiring by $L$ and call it the {\it left operator
hemiring} of the $\Gamma$-hemiring $S$.
Dually we define the {\it right operator hemiring} $R$ of the
$\Gamma$-hemiring $S$.
Let $S$ be a $\Gamma$-hemiring and $L$ be the
left operator hemiring and $R$ be the right one. If there exists an
element $\displaystyle{\sum^{m}_{i=1}[e_{i},\delta_{i}]} \in L
({\rm resp.}~\displaystyle{\sum^{n}_{j=1}}[\gamma_{j},f_{j}] \in R)$
such that $\displaystyle{\sum^{m}_{i=1}e_{i}\delta_{i}a=a}$
(respectively, $\displaystyle{\sum^{n}_{j=1}a \gamma_{j}f_{j}=a)}$ for
all $a \in S $, then $S$ is said to have the {\it left unity}
$\displaystyle{\sum^{m}_{i=1}[e_{i},\delta_{i}]}$ (respectively, the {\it right
unity}
$\displaystyle{\sum^{n}_{j=1}[\gamma_{j},f_{j}]})$.

Throughout this paper unless otherwise mentioned for different elements of $L$ (respectively, $R$) we take the same
index say $'i'$ whose range is finite that is from $1$ to $n$, for some positive integer $n$.\\

 Let $S$ be a $\Gamma$-hemiring, $L$ be the
left operator hemiring and $R$ be the right one. If there exists an
element $[e, \delta] \in L~({\rm respectively}, ~[\gamma,f] \in R)$ such that $e
\delta a=a~~({\rm respectively}, ~a \gamma f=a)$ for all $a \in S $,  then $S$ is
said to have the {\it strong left unity} $[e,\delta]$ (respectively,  strong
right unity $[\gamma, f]$)  \cite{Rao}.

Let $S$ be a $\Gamma$-hemiring, $L$ be the
left operator hemiring and $R$ be the right one. Let $ P\subseteq
L~(\subseteq R)$. According to \cite{re:Dutta}, we define $P^{+}=\{a \in S: [a,\Gamma] \subseteq
P\}$ (respectively, $P^{*}=\{a \in S:[\Gamma,a] \subseteq P\}$) and for $Q \subseteq S$,
$$Q^{+'}=\left \{ \displaystyle{\sum_{i=1}^{m}} [x_{i}, \alpha_{i}] \in L
:~\left (\displaystyle{\sum_{i=1}^{m}} ([x_{i}, \alpha_{i}]\right )S\subseteq Q
\right \},$$
where $\left (\displaystyle{\sum_{i=1}^{m}} [x_{i}, \alpha_{i}]\right )S$
denotes the set of all finite sums
$\displaystyle{\sum_{i,k}}x_{i}\alpha_{i}s_{k},~~s_{k} \in S$ and
$$Q^{*'}=\left \{ \displaystyle{\sum_{i=1}^{m}} [\alpha_{i},x_{i}] \in R
:~ S \left (\displaystyle{\sum_{i=1}^{m}} ([ \alpha_{i},x_{i}]\right )\subseteq Q
\right \},$$
where $S\left (\displaystyle{\sum_{i=1}^{m}} [x_{i}, \alpha_{i}]\right )$
denotes the set of all finite sums
$\displaystyle{\sum_{i,k}}s_{k}\alpha_{i}x_{i},~~s_{k} \in S$.

A {\it fuzzy subset} $\mu$ of a non-empty set $S$ is
 a function $\mu:S\longrightarrow[0,1]$.

Let $\mu$ be a non-empty
fuzzy subset of a $\Gamma$-hemiring $S$ (i.e., $\mu(x) \neq 0 $ for
some $x \in S
 $). Then $\mu $ is called a {\it fuzzy left ideal} (respectively, {\it fuzzy right ideal}) of $S$ \cite{Sardar} if
 \begin{itemize}
 \item[(1)] $\mu(x+y) \geq \min [\mu(x), \mu(y)]$,
 \item[(2)] $\mu(x \gamma y) \geq \mu(y) $ (respectively, $\mu(x \gamma y) \geq
  \mu(x)$),
\end{itemize}
for all $ x,y \in S$ and $ \gamma \in \Gamma$.
A {\it fuzzy ideal} of a $\Gamma$-hemiring $S$ is a non-empty fuzzy subset
of $S$ which is a fuzzy left ideal as well as a fuzzy right ideal of
$S$. Note that if $\mu$ is a fuzzy left or right ideal of a
$\Gamma$-hemiring $S$, then $\mu(0)$ $\geq$ $\mu(x)$ for all $x \in S$.\\
A left ideal $A$ of a $\Gamma$-hemiring $S$ is called a {\it left
$h$-ideal} if for any $x, z \in S$ and $a, b \in A$, $$x + a + z = b +
z \Longrightarrow x \in A.$$
A {\it right $h$-ideal} is defined analogously. A fuzzy left ideal $\mu$
of a $\Gamma$-hemiring $S$ is called a {\it fuzzy left $h$-ideal} if for all
$a, b, x, z \in$ S, $$x + a + z = b + z \Longrightarrow \mu(x)\geq
\min \{\mu(a), \mu(b)\}.$$
A {\it fuzzy right $h$-ideal} is defined similarly.
By a {\it fuzzy $h$-ideal} $\mu$, we mean that $\mu$ is both fuzzy left
and
fuzzy right $h$-ideal.

For example, let $S$ be the
additive commutative semigroup of all non-positive integers and
$\Gamma$ be the additive commutative semigroup of all non-positive
even integers. Then $S$ is a $\Gamma$-hemiring if $a \gamma b$
denotes the usual multiplication of integers $ a, \gamma, b $
where $ a, b \in S $ and $ \gamma \in \Gamma $. Let $\mu$ be a
fuzzy subset of $S$, defined as follows
$$ \mu(x)= \left
\{\begin{array}{ll}
 1 & {\rm if}\ x=0 \\ 0.7  & {\rm if} \  x
\ {\rm is \ even}
\\  0.1 & {\rm if} \   x\  { \rm is \ odd}
\end{array} \right . $$
The fuzzy subset $\mu$ of $S$ is both a fuzzy ideal and a fuzzy $h$-ideal of $S$.

Let $S$ be a $ \Gamma $-hemiring and $
\mu_{1}, \mu_{2}$ be two fuzzy subsets of $S$ . Then the
sum $ \mu_{1}\oplus \mu_{2} $ is defined as follows:
$$ (\mu_{1} \oplus \mu_{2})(x)= \left \{
\begin{array}{l}
\displaystyle{\sup_{x=u+v}} \{ \min \{
\mu_{1}(u), \mu_{2}(v) \}\ :\ u, v \in S\}  \\
 \ \  0 \ \  {\rm if} \  x \ {\rm can \ not \ be \ expressed \ as \ above}.
 \end{array}
 \right.
 $$

Let $\mu $ and $\theta$ be two fuzzy subsets of
a $\Gamma$-hemiring $S$. We define {\it generalized $h$-product} of $\mu $ and $\theta$ by
$$
\mu
\circ_{h}\theta(x)=\left \{
\begin{array}{l}
\sup\left \{\underset{i}{\min}\{\mu(a_{i}),\mu(c_{i}),\theta(b_{i}),
\theta(d_{i})\}\ :x+\displaystyle{\sum_{i=1}^{n}a_{i}\gamma_{i}b_{i}+z=
\sum_{i=1}^{n}c_{i}\delta_{i}d_{i}+z} \right \}\\
 0 \ \  {\rm if} \  x \ {\rm can \ not \ be \ expressed \ as \ above},
 \end{array}
 \right.
 $$
where $x,z,a_{i},b_{i},c_{i},d_{i}\in S$ and
$\gamma_{i},\delta_{i}\in \Gamma,$ for $i=1,\ldots ,n$.\\
Ma et. al. \cite{Ma} also defined simple $h$-product by
$$
\mu
\Gamma_{h}\theta(x)=\left \{
\begin{array}{l}
\sup \left \{\min
\{\mu(a),\mu(c),\theta(b),
\theta(d)\} \ : x+a\gamma b+z=
c\delta d+z\right \}\\
 0 \ \  {\rm if} \  x \ {\rm can \ not \ be \ expressed \ as \ above},
 \end{array}
 \right.
 $$
where $x,z,a,,c,d\in S$ and
$\gamma,\delta\in \Gamma.$\\
We now recall following two definitions from \cite{Ma}\\
A fuzzy left(right) $h$-ideal $\zeta$ of a
$\Gamma$-hemiring S is said to be {\it prime} if $\zeta$ is a non-constant
function and for any two fuzzy left(right) $h$-ideals $\mu$ and $\nu$
of S, $\mu\Gamma_{h}\nu\subseteq \zeta$ implies $\mu\subseteq\zeta$
 or $\nu\subseteq \zeta.$

Similarly, we can define {\it semiprime} fuzzy $h$-ideal.

A fuzzy subset $\mu$ of a $\Gamma$-hemiring $S$ is
 called {\it fuzzy $h$-bi-ideal} if for all $x,y,z,a,b\in$S and
 $\alpha,\beta\in\Gamma$ we have
 \begin{enumerate}
  \item $\mu(x+y)\geq\min\{\mu(x),\mu(y)\}$,
  \item $\mu(x\alpha y)\geq\min\{\mu(x),\mu(y)\}$,
  \item $\mu(x\alpha y\beta z)\geq\min\{\mu(x),\mu(z)\}$,
  \item $x+a+z=b+z \ \Rightarrow\ \mu(x)\geq\min\{\mu(a),\mu(b)\}$.
\end{enumerate}
A fuzzy subset $\mu$ of a $\Gamma$-hemiring $S$ is
 called {\it fuzzy $h$-quasi-ideal} if for all $x,y,z,a,b\in S$ we have
 \begin{enumerate}
   \item $\mu(x+y)\geq\min\{\mu(x),\mu(y)\}$,
   \item $(\mu o_{h}\chi_{S})\cap(\chi_{S} o_{h}\mu)\subseteq\mu$,
   \item $x+a+z=b+z \ \Rightarrow \ \mu(x)\geq\min\{\mu(a),\mu(b)\}$.
 \end{enumerate}

For more preliminaries of semirings (hemirings) and
$\Gamma$-semirings we refer to \cite{Golan}
and\cite{re:Dutta}, respectively. Also,
for more results on fuzzy $h$-ideals in $\Gamma$-hemirings we refer to \cite{Sardar}.
 Throughout this paper unless otherwise mentioned S denotes a $\Gamma$-hemiring with left
unity and right unity and  FLh-I$(S)$, FRh-I$(S)$ and Fh-I$(S)$ denote
respectively the set of all fuzzy left $h$-ideals, the set of all
fuzzy right $h$-ideals and the set of all fuzzy $h$-ideals of the $
\Gamma $-hemiring $S$. Similar is the meaning of FLh-I$(L)$, FLh-I$(R)$,
FRh-I$(L)$, FRh-I$(R)$, Fh-I$(L)$, Fh-I$(R)$, where L and R are
respectively the left operator and right operator hemirings of the
$\Gamma$-hemiring $S$. Also, in this section we assume that
$\mu(0)=1$ for a fuzzy left $h$-ideal (respectively, fuzzy right $h$-ideal,
fuzzy $h$-ideal) $\mu$ of a $\Gamma$-hemiring $S$. Similarly, we assume
that $\mu(0_{L})=1$ (respectively,  $\mu(0_{R})=1$) for a fuzzy left
$h$-ideal (respectively, fuzzy right $h$-ideal, fuzzy $h$-ideal) $\mu$ of the
left operator hemiring (respectively, right operator hemiring $R$) of a
$\Gamma$-hemiring $S$.
%%%%%%%%%%%%%%%%%%%%%%%%%%%%%%%%%%%%%%%%%%%%%%%%%%%%%%%%%%%%%%%%%%%%%%%%%%%%%%%%%%%%%%%%%%%%%%%%%%%%%%%%%%%%%%%%%%%%%%%%%%%%%%%%%%%%
\section{Correspondence of Fuzzy $h$-ideals}
 Throughout this section $S$ denotes a $\Gamma$-hemiring, $R$ denotes the
  right operator hemiring and
  $L$ denotes the left operator
hemiring of the $\Gamma$-hemiring $S$.
Now, we recall the following definitions from \cite{SardarGo}.
\begin{Definition} Let $ \mu$ be a fuzzy subset of $L$. We
define a fuzzy subset $\mu^{+}$ of $S$ by
$$ \mu^{+}(x)=
\displaystyle {\inf_{\gamma \in \Gamma}} \left \{ \mu \left ([x, \gamma] \right )\right \} ,$$ where
$x \in S $.
If $\sigma$ is a fuzzy subset of $S$, we define a fuzzy
subset $\sigma^{+'}$ of $L$ by $$
\sigma^{+'}\left (\displaystyle{\sum_{i}}[x_{i}, \alpha_{i}]\right )
=\displaystyle{\inf_{s \in S}} \left \{ \sigma \left (\displaystyle{\sum_{i}}
x_{i} \alpha_{i}s\right )\right \},$$ where $ \displaystyle{\sum_{i}} [x_{i},
\alpha_{i}] \in L $. \label{Def:3.1}
\end{Definition}

\begin{Definition} Let $\delta$ be a fuzzy subset of $R$. We
define a fuzzy subset $\delta^{*}$ of $S$ by
$$ \delta^{*}(x)
=\displaystyle{\inf_{\gamma \in \Gamma}} \left \{ \delta([ \gamma, x])\right \},$$
where $ x \in S $.
If $\eta$ is a fuzzy subset of $S$, we define a fuzzy subset
$\eta{*'}$ of $R$ by
$$
\eta^{*'}\left (\displaystyle{\sum_{i}}[\alpha_{i},
x_{i}]\right )=\displaystyle \inf_{s \in S} \left \{ \eta \left (\sum_{i} s
\alpha_{i}x_{i}\right )\right \},$$ where $\displaystyle{\sum_{i}} [ \alpha_{i},
x_{i}] \in R $.\label{Def:3.2}
\end{Definition}
\begin{Lemma} If $\{ \mu_{i}: i \in I \}$ is a collection of
fuzzy subsets of $L$, then $\displaystyle{ \bigcap_{i \in I}
\mu_{i}^{+}=\left (\bigcap_{i \in I}
\mu_{i}\right )^{+}}$.\label{Lemma:3.3}
\end{Lemma}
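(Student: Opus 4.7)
The plan is to verify the identity pointwise: fix $x \in S$ and show that the two fuzzy subsets on either side assign the same value to $x$. Since both sides are fuzzy subsets of $S$, pointwise equality is what we need.

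First I would unfold the left-hand side. By the definition of intersection of fuzzy subsets (pointwise infimum) and by Definition~\ref{Def:3.1},
\[
\left(\bigcap_{i \in I} \mu_i^{+}\right)(x) \;=\; \inf_{i \in I} \mu_i^{+}(x) \;=\; \inf_{i \in I}\, \inf_{\gamma \in \Gamma} \mu_i([x,\gamma]).
\]
Next I would unfold the right-hand side. Again by Definition~\ref{Def:3.1} applied to the fuzzy subset $\bigcap_{i \in I}\mu_i$ of $L$, and then by the definition of pointwise intersection evaluated at the element $[x,\gamma] \in L$,
\[
\left(\bigcap_{i \in I} \mu_i\right)^{+}(x) \;=\; \inf_{\gamma \in \Gamma}\left(\bigcap_{i \in I} \mu_i\right)([x,\gamma]) \;=\; \inf_{\gamma \in \Gamma}\, \inf_{i \in I} \mu_i([x,\gamma]).
\]

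The only remaining observation is that a double infimum over independent index sets can be taken in either order, i.e.\ $\inf_{i \in I}\inf_{\gamma \in \Gamma} = \inf_{\gamma \in \Gamma}\inf_{i \in I}$ when applied to the same non-negative doubly-indexed family $\{\mu_i([x,\gamma])\}_{i,\gamma}$. This gives equality of the two expressions and hence of the two fuzzy subsets. There is no genuine obstacle here: the lemma is essentially a bookkeeping statement that the $+$ operator, being defined as an infimum over $\Gamma$, commutes with arbitrary intersections; no property of $h$-ideals, of the congruence $\rho$, or of the operator hemiring structure is needed beyond the definitions.
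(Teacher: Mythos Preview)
Your proof is correct and is exactly the straightforward verification the paper has in mind; the paper simply writes ``It is straightforward'' without spelling out the two unfoldings and the interchange of infima that you supply.
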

\begin{proof} It is straightforward.
\end{proof}
\begin{Proposition} If $\mu \in$ Fh-I$(L)$, then $ \mu^{+} \in$
Fh-I$(S)$.\label{Prop:3.4}
\end{Proposition}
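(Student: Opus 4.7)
The plan is to verify each of the four defining conditions of a fuzzy $h$-ideal for $\mu^{+}$ by lifting computations in $S$ to computations about the bracket symbols $[x,\gamma]\in L$, using the fact that in $L$ we have the identities $[x+y,\gamma]=[x,\gamma]+[y,\gamma]$ and $[x,\alpha]\cdot[y,\beta]=[x\alpha y,\beta]$. Nonemptiness is immediate: since $0\alpha a=0$ for all $a,\alpha$, every symbol $[0,\gamma]$ equals the zero $0_L$, so $\mu^{+}(0)=\inf_{\gamma}\mu(0_L)=1$ by the standing assumption.

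For additivity, fix $x,y\in S$. For each $\gamma\in\Gamma$, the equality $[x+y,\gamma]=[x,\gamma]+[y,\gamma]$ in $L$ combined with the fuzzy-ideal axiom for $\mu$ gives $\mu([x+y,\gamma])\ge\min\{\mu([x,\gamma]),\mu([y,\gamma])\}\ge\min\{\mu^{+}(x),\mu^{+}(y)\}$. Taking infimum over $\gamma$ yields $\mu^{+}(x+y)\ge\min\{\mu^{+}(x),\mu^{+}(y)\}$. For the fuzzy left ideal condition, given $x,y\in S$ and $\alpha\in\Gamma$, for each $\beta\in\Gamma$ we write $[x\alpha y,\beta]=[x,\alpha]\cdot[y,\beta]$ in $L$; since $\mu$ is a fuzzy left $h$-ideal of $L$ it is in particular a fuzzy left ideal, so $\mu([x,\alpha]\cdot[y,\beta])\ge\mu([y,\beta])$, and taking inf over $\beta$ gives $\mu^{+}(x\alpha y)\ge\mu^{+}(y)$.

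The fuzzy right ideal condition is the step that looks most delicate, because factoring $[x\alpha y,\beta]$ as $[x,\alpha]\cdot[y,\beta]$ loses the parameter $\beta$ on the left factor. The way around this is to observe that for each fixed $\gamma\in\Gamma$ and every $\beta\in\Gamma$, the right-ideal property of $\mu$ gives $\mu([x\gamma y,\beta])=\mu([x,\gamma]\cdot[y,\beta])\ge\mu([x,\gamma])$; taking infimum over $\beta$ we obtain $\mu^{+}(x\gamma y)\ge\mu([x,\gamma])\ge\mu^{+}(x)$, where the last inequality just uses that $\mu^{+}(x)$ is an infimum. No unity is needed for this step.

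Finally, for the $h$-condition, suppose $x+a+z=b+z$ in $S$. For every $\gamma\in\Gamma$, bracketing each term with the same $\gamma$ gives the identity
\[
[x,\gamma]+[a,\gamma]+[z,\gamma]=[x+a+z,\gamma]=[b+z,\gamma]=[b,\gamma]+[z,\gamma]
\]
in $L$. Applying the $h$-property of $\mu$ in $L$ yields $\mu([x,\gamma])\ge\min\{\mu([a,\gamma]),\mu([b,\gamma])\}\ge\min\{\mu^{+}(a),\mu^{+}(b)\}$, and taking infimum over $\gamma$ gives $\mu^{+}(x)\ge\min\{\mu^{+}(a),\mu^{+}(b)\}$. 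Combining the four steps shows $\mu^{+}\in$ Fh-I$(S)$. I expect the right-ideal verification to be the only spot where someone might get stuck, and the idea there is simply to bound $\mu^{+}(x\gamma y)$ below by the single value $\mu([x,\gamma])$ (rather than trying to get $\mu^{+}(x)$ directly under the infimum).
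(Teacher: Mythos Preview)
Your proof is correct and follows essentially the same route as the paper: both verify nonemptiness via $[0_S,\gamma]=0_L$, additivity via $[x+y,\gamma]=[x,\gamma]+[y,\gamma]$, the left and right ideal conditions via the factorization $[x\alpha y,\gamma]=[x,\alpha][y,\gamma]$ (bounding below by $\mu([y,\gamma])$ and $\mu([x,\alpha])$ respectively), and the $h$-condition by bracketing the relation $x+a+z=b+z$ with a common $\gamma$. The ``delicate'' right-ideal step you flag is handled in the paper exactly as you do it, bounding $\inf_\gamma\mu([x,\alpha][y,\gamma])\ge\mu([x,\alpha])\ge\mu^{+}(x)$.
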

\begin{proof} Let $\mu \in$ Fh-I$(L)$. Then $\mu(0_{L})=1$.
Since for all $\gamma \in \Gamma ,~[0_{S}, \gamma]
$ is the zero element of $L$, we have
 $\mu^{+}(0_{S})= \displaystyle{\inf_{\gamma \in \Gamma} \left \{\mu([0_{S},
\gamma])\right \}} =1$.
So $\mu^{+}$ is non empty and $\mu^{+}(0_{S})=1$.
Let $x, y \in S$ and $\alpha \in \Gamma$.
Then
$$
\begin{array}{ll}
 \mu^{+}(x+y)& =\displaystyle{\inf_{\gamma \in \Gamma}}\left \{\mu([x+y, \gamma])\right \}\\
 &=\displaystyle{\inf_{\gamma \in
\Gamma}}\left \{\mu([x, \gamma]+[y, \gamma])\right \}\\
&
\geq\displaystyle{\inf_{\gamma \in \Gamma}}\left \{\min \{\mu([x, \gamma]),\mu([y,\gamma])\} \right \}\\
&=\min\left \{\displaystyle{\inf_{\gamma
\in \Gamma}} \left \{\mu([x, \gamma])\right \}, \displaystyle{\inf_{\gamma \in
\Gamma}}\left \{\mu([y, \gamma]) \right \} \right \}
\\
& =\min \{\mu^{+}(x), \mu^{+}(y)\}.
\end{array}
$$
Therefore, $\mu^{+}(x+y) \geq \min \{\mu^{+}(x), \mu^{+}(y)\}$.
Also, we have
$$
\mu^{+}(x \alpha y)  =\displaystyle{\inf_{\gamma \in \Gamma}}
\{\mu([x \alpha y, \gamma])\}
\geq \displaystyle{\inf_{\gamma \in \Gamma}}\{\mu([x, \alpha][y,
\gamma])\}\geq \underset{\gamma\in\Gamma}{\inf}
\{\mu[y, \gamma]\}=\mu^{+}(y)
$$ and
$$\mu^{+}(x \alpha y)=\displaystyle{\inf_{\gamma \in
\Gamma}\{\mu([x \alpha y,\gamma])\}}= \displaystyle{\inf_{\gamma \in \Gamma}}\{\mu([x,
\alpha][y, \gamma])\}
\geq\mu([x, \alpha]) \geq
\displaystyle{\inf_{\delta \in \Gamma}}\{\mu([x,
\delta])\}=\mu^{+}(x).$$
Hence, $\mu^{+}$ is a fuzzy ideal of $S$.
Now, suppose that $x+a+z=b+z$, for $x,a,b,z\in S$. Then
$$
\begin{array}{ll}
\mu^{+}(x) &=\displaystyle{\inf_{\gamma \in \Gamma}\{\mu([x,
\gamma])\}}\\
&\geq\displaystyle{ \inf_{\gamma \in \Gamma} \{\min\{\mu([a,
\gamma]),\mu([b,\gamma])\}\}}\\
&=\min\left \{ \displaystyle{\inf_{\gamma \in \Gamma}\left \{\mu \left ([a,
\gamma] \right ) \right \},\inf_{\gamma \in \Gamma}\left \{\mu([b,\gamma])\right \} } \right \}\\
&=\min\{\mu^{+}(a),\mu^{+}(b)\}.
\end{array}
$$
Therefore, $\mu^{+}$ is a fuzzy $h$-ideal of $S$.
\end{proof}
\begin{Proposition}\label{$h$-ideal+} If $\sigma \in$ Fh-I$(S)$ (respectively,  FRh-I$(S)$,
FLh-I$(S)$), then $\sigma^{+'} \in $ Fh-I$(L)$ (respectively, FRh-I$(L)$,
FLh-I$(L)$).\label{Prop:3.5}\end{Proposition}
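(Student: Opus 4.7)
The strategy is entirely parallel to Proposition~\ref{Prop:3.4}: push each defining inequality for $\sigma$ on $S$ through the infimum-over-$S$ in the definition of $\sigma^{+'}$, which automatically transfers each axiom from $S$ to $L$. Before anything else, I would dispose of well-definedness: if $\sum_i [x_i,\alpha_i] = \sum_j [y_j,\beta_j]$ in $L$, then by the very definition of the congruence $\rho$ we have $\sum_i x_i \alpha_i s = \sum_j y_j \beta_j s$ in $S$ for every $s \in S$, so the two infima agree and $\sigma^{+'}$ is a bona fide function on $L$.

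To streamline the remaining verifications, abbreviate $\xi \cdot s := \sum_i x_i \alpha_i s \in S$ for $\xi = \sum_i [x_i,\alpha_i] \in L$ and $s \in S$, so that $\sigma^{+'}(\xi) = \inf_{s \in S} \sigma(\xi \cdot s)$. Then $\sigma^{+'}(0_L) = \inf_s \sigma(0_S) = 1$ by the blanket hypothesis $\sigma(0)=1$. For additivity, $(\xi + \eta)\cdot s = \xi\cdot s + \eta\cdot s$, so applying the sum-axiom for $\sigma$ inside the infimum gives $\sigma^{+'}(\xi + \eta) \geq \min\{\sigma^{+'}(\xi), \sigma^{+'}(\eta)\}$. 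For the $h$-condition, if $\xi + \zeta_1 + \omega = \zeta_2 + \omega$ in $L$, then by $\rho$ we get $\xi\cdot s + \zeta_1\cdot s + \omega\cdot s = \zeta_2\cdot s + \omega\cdot s$ in $S$ for every $s$, and the $h$-axiom for $\sigma$ followed by $\inf_s$ delivers $\sigma^{+'}(\xi) \geq \min\{\sigma^{+'}(\zeta_1),\sigma^{+'}(\zeta_2)\}$.

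The only step carrying any real content is the ideal condition on $L$, which rests on the identity $(\xi\eta)\cdot s = \xi\cdot(\eta\cdot s)$ in $S$ (immediate from the $\Gamma$-hemiring axioms and the definition of multiplication in $L$). For the right-ideal direction, since $\eta\cdot s$ is some element of $S$ we have $\sigma((\xi\eta)\cdot s) = \sigma(\xi\cdot(\eta\cdot s)) \geq \inf_{u \in S}\sigma(\xi\cdot u) = \sigma^{+'}(\xi)$, and then $\inf_s$ yields $\sigma^{+'}(\xi\eta)\geq\sigma^{+'}(\xi)$; notably this half does not even invoke a right-ideal property of $\sigma$. For the left-ideal direction, set $t := \eta\cdot s$ and apply the left-ideal inequality for $\sigma$ summand-by-summand: $\sigma(\xi\cdot t) \geq \min_i \sigma(x_i\alpha_i t) \geq \sigma(t) = \sigma(\eta\cdot s)$, and $\inf_s$ gives $\sigma^{+'}(\xi\eta)\geq \sigma^{+'}(\eta)$. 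The one-sided and two-sided versions of the proposition then follow by combining whichever of these two multiplication inequalities is needed.

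I do not expect a genuine obstacle. The only points demanding care are well-definedness on $\rho$-classes and the identity $(\xi\eta)\cdot s = \xi\cdot(\eta\cdot s)$ that lets the right-ideal inequality be read off directly from the definition of $\sigma^{+'}$; everything else is bookkeeping of infima.
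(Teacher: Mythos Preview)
Your proof is correct and follows the same overall template as the paper's: verify $\sigma^{+'}(0_L)=1$, then push each of the additivity, ideal, and $h$-closure inequalities for $\sigma$ through the infimum defining $\sigma^{+'}$. Your abbreviation $\xi\cdot s$ and the identity $(\xi\eta)\cdot s=\xi\cdot(\eta\cdot s)$ package this neatly, and your explicit well-definedness check on $\rho$-classes is a welcome addition the paper leaves implicit.

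There is one genuine, if minor, difference worth flagging. For the right-ideal inequality $\sigma^{+'}(\xi\eta)\ge\sigma^{+'}(\xi)$, the paper expands $(\xi\eta)\cdot s=\sum_{i,j}x_i\alpha_i y_j\beta_j s$, applies additivity over $j$ and then the \emph{right}-ideal property of $\sigma$ to strip off $\beta_j s$, landing on $\min_j\sigma(\sum_i x_i\alpha_i y_j)\ge\inf_u\sigma(\sum_i x_i\alpha_i u)$. Your route is shorter and, as you note, does not touch any ideal hypothesis on $\sigma$ at all: since $\eta\cdot s\in S$, the value $\sigma(\xi\cdot(\eta\cdot s))$ is already one of the terms in the infimum defining $\sigma^{+'}(\xi)$. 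This is a cleaner argument and in fact shows that $\sigma^{+'}$ is a fuzzy right ideal of $L$ for \emph{any} fuzzy subset $\sigma$ of $S$, a small strengthening the paper's computation does not make visible. The left-ideal direction and the remaining steps match the paper's reasoning essentially line for line.
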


\begin{proof} Let $\sigma \in$ Fh-I$(S)$. Then $\sigma(0_{S})=1$.
Now, we have
$$\sigma^{+'}([0_{S}, \gamma])= \displaystyle{\inf_{s \in
S}\{\sigma(0_{S} \gamma s)\} =\inf_{s \in S}\{
\sigma(0_{S})\}}=1,$$
for all $\gamma \in \Gamma$.
Therefore, $\sigma^{+'}$ is non-empty and $\sigma^{+'}(0_{L})=1$ as $[0_{S},\gamma]$ is the zero element of $L$.
Let $\displaystyle{\sum_{i} [x_{i}, \alpha_{i}],~~\sum_{j} [y_{j}, \beta_{j}]} \in L
$. Then
$$
\begin{array}{l}
\sigma^{+'}\displaystyle{\left (\sum_{i}[x_{i}, \alpha_{i}]+ \sum_{j}[y_{j},
\beta_{j}]\right )}\\
=\displaystyle{\inf_{s \in S}\left \{\sigma \left (\sum_{i} x_{i} \alpha_{i}s
+\sum_{j}y_{j} \beta_{j}s \right )\right \}}\\
\geq \displaystyle{\inf_{s \in S}\left \{\min \left \{\sigma \left (\sum_{i} x_{i} \alpha_{i}s \right ),
 \sigma \left (\sum_{j} y_{j} \beta_{j}s \right )\right \} \right \}}
\\
=\min \left \{\displaystyle{\inf_{s \in S}\left \{\sigma \left (\sum_{i} x_{i} \alpha_{i}s \right )\right \},
\inf_{s \in S}\left \{\sigma \left (\sum_{j} y_{j} \beta_{j}
s \right )\right \}} \right \}\\
=\min\left \{ \sigma^{+'}\left (\displaystyle \sum_{i}[x_{i},
\alpha_{i}]\right ), \sigma^{+'} \left (\displaystyle \sum_{j}[y_{j},
\beta_{j}]\right ) \right \}.
\end{array}
$$
Also, we have
$$
\begin{array}{l}
\sigma^{+'}\left (\displaystyle{\sum_{i}[ x_{i}, \alpha_{i}] \sum_{j}[ y_{j}, \beta_{j}]} \right )
\\
=\sigma^{+'}\displaystyle{\left (\sum_{i,j}[ x_{i} \alpha_{i} y_{j},
\beta_{j}]\right )}\\

=\displaystyle{\inf_{s \in S}} \left \{ \sigma \left (\displaystyle{\sum_{i,j}} x_{i}
\alpha_{i} y_{j} \beta_{j} s \right ) \right \}\\
\geq \displaystyle{\inf_{s \in S}
 \left \{\min \left \{ \sigma \left (\sum_{i} x_{i} \alpha_{i}y_{1} \right ),
\sigma \left (\sum_{i}~~ x_{i} \alpha_{i}y_{2} \right ), \sigma \left (\sum_{i} x_{i}
\alpha_{i}y_{3} \right ),\ldots \right \} \right \}}\\
\geq
\displaystyle\min \left \{ \sigma \left (\sum_{i}~~ x_{i} \alpha_{i}y_{1} \right ),
\sigma \left (\sum_{i}~~ x_{i} \alpha_{i}y_{2} \right ), \sigma \left (\sum_{i} x_{i}
\alpha_{i}y_{3} \right ),\ldots \right \}\\
\geq
\displaystyle{\inf_{s \in S}\left \{ \sigma \left (\sum_{i}(x_{i} \alpha_{i}
s)\right )\right \}}\\
=\sigma^{+'}\left (\displaystyle{\sum_{i}}[x_{i}, \alpha_{i}]\right ).
\end{array}
$$
Similarly, we can show that $$\displaystyle{\sigma^{+'} \left (\sum_{i}[ x_{i}, \alpha_{i}] \sum_{j}[ y_{j},
\beta_{j}] \right ) \geq
\sigma^{+'}\left (\sum_{j}[y_{j},\beta_{j}]\right )}.$$
Thus, $\sigma^{+'}$ is a fuzzy ideal of $L$.
Now, suppose that $$\displaystyle{\sum_{i}} [x_{i},
e_{i}]+\displaystyle{\sum_{i}} [a_{i},
\alpha_{i}]+\displaystyle{\sum_{i}} [z_{i},
\delta_{i}]=\displaystyle{\sum_{i}} [b_{i},
\beta_{i}]+\displaystyle{\sum_{i}} [z_{i}, \delta_{i}],$$ where $
\displaystyle{\sum_{i}} [x_{i}, e_{i}],\displaystyle{\sum_{i}}
[a_{i}, \alpha_{i}], \displaystyle{\sum_{i}} [z_{i},
\delta_{i}], \displaystyle{\sum_{i}} [b_{i}, \beta_{i}]\in L$.
Then
$$
\begin{array}{ll}
\sigma^{+'}\displaystyle{ \left (\sum_{i}[x_{i}, e_{i}]\right )}
&=\displaystyle \inf_{s \in S}\left \{\sigma \left (\sum_{i} x_{i}
e_{i}s \right ) \right \} \\
&\geq \displaystyle \inf_{s \in S} \left \{\min \left \{\sigma \left (\sum_{i} a_{i} \alpha_{i}s \right ),
 \sigma \left (\sum_{j} b_{i} \beta_{i}s \right ) \right \} \right \}
\\
&=\min \left \{\displaystyle\inf_{s \in S} \left \{\sigma \left (\sum_{i} a_{i} \alpha_{i}s \right ) \right \},
\inf_{s \in S}\left \{\sigma \left (\sum_{j} b_{i} \beta_{i}
s \right )\right \} \right \}\\
&=\min \left \{ \sigma^{+'} \left (\displaystyle \sum_{i}[a_{i},
\alpha_{i}] \right ), \sigma^{+'} \left (\displaystyle \sum_{j}[b_{i},
\beta_{i}] \right ) \right \}.
\end{array}
$$
Therefore, $\sigma^{+'}$ is a fuzzy $h$-ideal of $L$.
\end{proof}
Similarly, we can prove the following propositions.

\begin{Proposition} If $\delta \in$ Fh-I$(R)$ (respectively, FRh-I$(R)$,
FLh-I$(R)$), then
$\delta^{*} \in$ Fh-I$(S)$ (respectively,  FRh-I$(S)$,
FLh-I$(S)$).\label{Prop:3.6}\end{Proposition}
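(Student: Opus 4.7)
The plan is to mirror the proof of Proposition \ref{Prop:3.4} step by step, replacing the left operator hemiring $L$ and the bracket $[x,\gamma]$ by the right operator hemiring $R$ and its bracket $[\gamma,x]$, and swapping the roles of the left and right ideal conditions where appropriate. Throughout, the identity I will use repeatedly is the dual of the defining identities for $L$: for $R$, the zero element is $[\gamma,0_S]$, addition satisfies $[\gamma,x+y]=[\gamma,x]+[\gamma,y]$, and multiplication is given by $[\gamma,x][\alpha,y]=[\gamma,x\alpha y]$.

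First I would check that $\delta^{*}$ is non-empty with $\delta^{*}(0_S)=1$, using $\delta(0_R)=1$ together with the fact that $[\gamma,0_S]$ is the zero of $R$ for every $\gamma\in\Gamma$. Next I would verify additivity
\[
\delta^{*}(x+y)\;\geq\;\min\{\delta^{*}(x),\delta^{*}(y)\}
\]
by writing $[\gamma,x+y]=[\gamma,x]+[\gamma,y]$, applying the additivity of the fuzzy $h$-ideal $\delta$ pointwise in $\gamma$, and then interchanging $\min$ with $\inf_{\gamma\in\Gamma}$, exactly as in Proposition \ref{Prop:3.4}.

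For the ideal conditions, I would use the multiplication rule in $R$ to write $[\gamma,x\alpha y]=[\gamma,x][\alpha,y]$. If $\delta\in\mathrm{FRh\text{-}I}(R)$, then $\delta([\gamma,x][\alpha,y])\geq \delta([\gamma,x])$, and taking $\inf_\gamma$ yields $\delta^{*}(x\alpha y)\geq \delta^{*}(x)$; so $\delta^{*}\in\mathrm{FRh\text{-}I}(S)$. If instead $\delta\in\mathrm{FLh\text{-}I}(R)$, then $\delta([\gamma,x][\alpha,y])\geq \delta([\alpha,y])$ for every $\gamma$, hence $\delta^{*}(x\alpha y)\geq \delta([\alpha,y])\geq \inf_{\beta\in\Gamma}\delta([\beta,y])=\delta^{*}(y)$, giving $\delta^{*}\in\mathrm{FLh\text{-}I}(S)$. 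Combining both yields the Fh-I$(S)$ case.

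Finally, for the $h$-property, suppose $x+a+z=b+z$ in $S$. Applying the $R$-bracket $[\gamma,\cdot]$ and using its additivity, we obtain $[\gamma,x]+[\gamma,a]+[\gamma,z]=[\gamma,b]+[\gamma,z]$ in $R$ for every $\gamma\in\Gamma$. Since $\delta$ is a fuzzy $h$-ideal of $R$, this forces $\delta([\gamma,x])\geq\min\{\delta([\gamma,a]),\delta([\gamma,b])\}$; taking $\inf_\gamma$ and commuting it with $\min$ gives $\delta^{*}(x)\geq\min\{\delta^{*}(a),\delta^{*}(b)\}$. The only real subtlety—and the step most likely to trip up a reader—is getting the transposition between left and right ideal conditions correct under the duality $L\leftrightarrow R$, since the factor $[\gamma,x]$ now sits on the \emph{left} of the product $[\gamma,x][\alpha,y]$ in $R$; once this identification is made, the remainder is a routine transcription of the argument for $\mu^{+}$.
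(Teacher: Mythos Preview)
Your proposal is correct and is exactly the argument the paper has in mind: the paper does not write out a proof of Proposition~\ref{Prop:3.6} at all, but merely says ``Similarly, we can prove the following propositions,'' referring back to the proof of Proposition~\ref{Prop:3.4}. Your dualisation is accurate---in particular the multiplication law $[\gamma,x][\alpha,y]=[\gamma,x\alpha y]$ in $R$ and the resulting correspondences $\mathrm{FRh\text{-}I}(R)\to\mathrm{FRh\text{-}I}(S)$ and $\mathrm{FLh\text{-}I}(R)\to\mathrm{FLh\text{-}I}(S)$ (no left/right swap in the conclusion) are exactly right.
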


\begin{Proposition} If $\eta \in $ Fh-I$(S)$ (respectively, FRh-I$(S)$, FLh-I$(S)$),
then
$\eta^{*'} \in $ Fh-I$(R)$ (respectively, FRh-I$(R)$,
FLh-I$(R)$).\label{Prop:3.7}\end{Proposition}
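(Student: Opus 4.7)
The plan is to mirror the proof of Proposition~\ref{Prop:3.5}, adapted to the dual construction of the right operator hemiring $R$. Recall that the multiplication in $R$ is given by $\left(\sum_i[\alpha_i,x_i]\right)\left(\sum_j[\beta_j,y_j]\right)=\sum_{i,j}[\alpha_i,x_i\beta_j y_j]$ (the ``inner'' component is the second entry, dual to $L$), and $[\alpha,0_S]$ is the zero of $R$ for any $\alpha\in\Gamma$. Non-emptiness and $\eta^{*'}(0_R)=1$ follow immediately from $\eta^{*'}([\alpha,0_S])=\inf_{s\in S}\eta(s\alpha 0_S)=\eta(0_S)=1$. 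Additivity is straightforward: for $U=\sum_i[\alpha_i,x_i]$ and $V=\sum_j[\beta_j,y_j]$,
$$\eta^{*'}(U+V)=\inf_{s\in S}\eta\Big(\sum_i s\alpha_i x_i+\sum_j s\beta_j y_j\Big)\ge\min\{\eta^{*'}(U),\eta^{*'}(V)\},$$
using that $\eta$ is closed under addition up to min and the standard exchange of $\inf$ with $\min$.

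For the multiplicative property, using the formula for $UV$ together with associativity and distributivity in $S$,
$$\eta^{*'}(UV)=\inf_s\eta\Big(\sum_{i,j}s\alpha_i x_i\beta_j y_j\Big)=\inf_s\eta\Big(\sum_j t_s\beta_j y_j\Big),\qquad t_s:=\sum_i s\alpha_i x_i\in S.$$
Since $\sum_j t_s\beta_j y_j$ already has the form defining $\eta^{*'}(V)$ (taking $s'=t_s$), the inner expression is $\ge\eta^{*'}(V)$, whence $\eta^{*'}(UV)\ge\eta^{*'}(V)$ with no ideal hypothesis on $\eta$. For $\eta^{*'}(UV)\ge\eta^{*'}(U)$ I invoke the fuzzy right-ideal property of $\eta$: $\eta(\sum_j t_s\beta_j y_j)\ge\min_j\eta(t_s\beta_j y_j)\ge\eta(t_s)=\eta(\sum_i s\alpha_i x_i)$, and $\inf_s$ yields the required inequality. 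In the FRh-I$(S)$ case only this second inequality is needed, in the FLh-I$(S)$ case only the first, and in the Fh-I$(S)$ case both are available.

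Finally, for the $h$-ideal step, suppose $U+A+Z=B+Z$ in $R$ with $U=\sum_i[\alpha_i,x_i]$, $A=\sum_j[\alpha'_j,a_j]$, $B=\sum_l[\epsilon_l,b_l]$, $Z=\sum_k[\delta_k,z_k]$. By the definition of the congruence used to construct $R$, this equality is equivalent to the statement that, for every $s\in S$,
$$\sum_i s\alpha_i x_i+\sum_j s\alpha'_j a_j+\sum_k s\delta_k z_k=\sum_l s\epsilon_l b_l+\sum_k s\delta_k z_k$$
holds in $S$. Applying the $h$-ideal property of $\eta$ pointwise in $s$ and then taking $\inf_s$ produces $\eta^{*'}(U)\ge\min\{\eta^{*'}(A),\eta^{*'}(B)\}$. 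The main potential obstacle is keeping the multiplication convention in $R$ correct (it is dual to the one in $L$, acting inside the second slot), and recognising that the congruence defining $R$ promotes the single equation $U+A+Z=B+Z$ to a whole family of equations in $S$ parametrised by $s$, which is exactly what is needed to invoke the pointwise $h$-ideal property of $\eta$.
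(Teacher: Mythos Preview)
Your proof is correct and follows essentially the same approach as the paper, which simply states that Proposition~\ref{Prop:3.7} is proved by dualising the argument of Proposition~\ref{Prop:3.5}. Your observation that $\eta^{*'}(UV)\ge\eta^{*'}(V)$ follows from the infimum structure alone (since $t_s=\sum_i s\alpha_i x_i$ ranges over $S$) is a minor but pleasant simplification over the direct dual of the paper's computation in Proposition~\ref{Prop:3.5}, which would invoke the left-ideal property of $\eta$ at that step.
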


\begin{Theorem} The lattice of all fuzzy $h$-ideals of $S$ and the lattice of all fuzzy $h$-ideals of $L$ are isomorphic via the mapping $\sigma \mapsto
\sigma^{+'}$, where $\sigma \in$ Fh-I$(S)$ and $\sigma^{+'} \in$ Fh-I$(L)$.\label{Th:3.8}\end{Theorem}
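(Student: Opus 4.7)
The plan is to exhibit a two-sided inverse for $\Phi\colon\sigma\mapsto\sigma^{+'}$ and verify that both $\Phi$ and its inverse are order-preserving; since the lattice structure on either side is determined by inclusion (meets are pointwise minima, which remain fuzzy $h$-ideals by a straightforward check, and joins are the intersections of all fuzzy $h$-ideals lying above the given candidates), an order isomorphism will automatically be a lattice isomorphism. Proposition~\ref{Prop:3.5} shows $\Phi$ maps Fh-I$(S)$ into Fh-I$(L)$, and Proposition~\ref{Prop:3.4} shows the candidate inverse $\Psi\colon\mu\mapsto\mu^{+}$ maps Fh-I$(L)$ into Fh-I$(S)$. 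The substantive work is to establish the identities $\Psi\circ\Phi=\mathrm{id}_{\text{Fh-I}(S)}$ and $\Phi\circ\Psi=\mathrm{id}_{\text{Fh-I}(L)}$.

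For $\Psi\circ\Phi$, fix $\sigma\in$ Fh-I$(S)$ and $x\in S$. Unfolding definitions gives $(\sigma^{+'})^{+}(x)=\inf_{\gamma\in\Gamma}\inf_{s\in S}\sigma(x\gamma s)$. Since $\sigma$ is a fuzzy right ideal, $\sigma(x\gamma s)\geq\sigma(x)$ for all $\gamma,s$, hence $(\sigma^{+'})^{+}(x)\geq\sigma(x)$. For the reverse inequality, invoke the right unity $\sum_{j}[\gamma_{j},f_{j}]\in R$ satisfying $x=\sum_{j}x\gamma_{j}f_{j}$; specialising $s=f_{j}$ in each slot yields $(\sigma^{+'})^{+}(x)\leq\min_{j}\sigma(x\gamma_{j}f_{j})\leq\sigma\bigl(\sum_{j}x\gamma_{j}f_{j}\bigr)=\sigma(x)$, the penultimate step being additivity of $\sigma$.

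For $\Phi\circ\Psi$, fix $\mu\in$ Fh-I$(L)$ and $A=\sum_{i}[x_{i},\alpha_{i}]\in L$. The identity $[x_{i}\alpha_{i}s,\gamma]=[x_{i},\alpha_{i}][s,\gamma]$, which follows directly from the congruence $\rho$, allows the rewriting $(\mu^{+})^{+'}(A)=\inf_{s,\gamma}\mu\bigl(A[s,\gamma]\bigr)$. Since $\mu$ is a fuzzy right ideal of $L$, $\mu(A[s,\gamma])\geq\mu(A)$ gives $(\mu^{+})^{+'}(A)\geq\mu(A)$. For the reverse inequality, the key observation is that the left unity $\sum_{j}[e_{j},\delta_{j}]\in L$ produces the decomposition $A=\sum_{j}A[e_{j},\delta_{j}]$ in $L$; one checks this via $\rho$ using $\sum_{j}x_{i}\alpha_{i}e_{j}\delta_{j}a=\sum_{j}x_{i}\alpha_{i}(\sum_{j}e_{j}\delta_{j}a)=x_{i}\alpha_{i}a$. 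Additivity of $\mu$ then yields $\mu(A)\geq\min_{j}\mu(A[e_{j},\delta_{j}])\geq(\mu^{+})^{+'}(A)$, completing the equality.

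Finally, both $\Phi$ and $\Psi$ are order-preserving, as is immediate from the infimum form of their definitions: $\sigma_{1}\subseteq\sigma_{2}$ forces $\sigma_{1}^{+'}\subseteq\sigma_{2}^{+'}$, and symmetrically for $\Psi$. Hence $\Phi$ is an order isomorphism between the lattices Fh-I$(S)$ and Fh-I$(L)$, and therefore a lattice isomorphism. The main obstacle is recognising and justifying the decomposition $A=\sum_{j}A[e_{j},\delta_{j}]$ through the underlying congruence; without it, one cannot upgrade the easy inequality $(\mu^{+})^{+'}(A)\geq\mu(A)$ to equality using only that $\mu$ is a fuzzy $h$-ideal.
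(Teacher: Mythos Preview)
Your argument is correct and mirrors the paper's proof for the core bijection: both use the right unity $\sum_{j}[\gamma_{j},f_{j}]$ to obtain $(\sigma^{+'})^{+}=\sigma$ and the left unity $\sum_{j}[e_{j},\delta_{j}]$ (acting as a right identity in $L$) to obtain $(\mu^{+})^{+'}=\mu$, and both check order preservation in each direction from the infimum definitions. The one divergence is in the final step: the paper verifies directly that $(\sigma_{1}\oplus\sigma_{2})^{+'}=\sigma_{1}^{+'}\oplus\sigma_{2}^{+'}$ and $(\sigma_{1}\cap\sigma_{2})^{+'}=\sigma_{1}^{+'}\cap\sigma_{2}^{+'}$, whereas you appeal to the general principle that an order isomorphism between lattices automatically preserves meets and joins. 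Your route is shorter and entirely legitimate; the paper's explicit computation has the side benefit of identifying the lattice operations concretely as $\oplus$ and $\cap$, but that is not needed for the theorem as stated.
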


\begin{proof} First, we  show that $(\sigma^{+'})^{+}=\sigma
$, where $\sigma \in$ Fh-I$(S)$. Let $x \in S $. Then
$$\begin{array}{l}
((\sigma^{+'})^{+})(x)= \displaystyle{\inf_{\gamma \in \Gamma}
\{\sigma^{+'}([x, \gamma])\}=\inf_{\gamma \in \Gamma}\left \{ \inf_{s \in S
}\{\sigma(x \gamma s)\} \right \}}
\geq
\displaystyle{\inf_{\gamma \in \Gamma}\left \{\inf_{s \in
S}\{\sigma(x)\} \right \}}=\sigma(x) .
\end{array}
$$
So $\sigma \subseteq (\sigma^{+'})^{+}$.
Now, let $\displaystyle{\sum_{i}} [\gamma_{i}, f_{i}]$ be the right unity of $S$. Then $\displaystyle{\sum_{i}} x \gamma_{i}
f_{i}=x $ for all $x \in S$. We have
$$\sigma(x)=\sigma\left (\displaystyle{\sum_{i}} x \gamma_{i} f_{i}\right )
\geq \min \{\sigma(x \gamma_{1} f_{1}),\sigma(x \gamma_{2}
f_{2}),\ldots \}
\geq \displaystyle{\inf_{\gamma
\in \Gamma} \left \{ \inf_{s \in S} \{\sigma(x \gamma s)\} \right \}}=
(\sigma^{+'})^{+}(x).$$
Therefore, $(\sigma^{+'})^{+} \subseteq \sigma $ and so $ (\sigma^{+'})^{+}=\sigma
$. Now, let $\mu \in$ Fh-I$(L)$. Then
$$
\begin{array}{ll}
 ((\mu^{+})^{+'})\left (\displaystyle{\sum_{i}}[x_{i}, \alpha_{i}]\right )
 &=\displaystyle{\inf_{s \in S}\left \{\mu^{+}\left (\sum_{i} x_{i} \alpha_{i}
s \right ) \right \}}\\
&=\displaystyle{\inf_{s \in S} \left \{\inf_{\gamma \in \Gamma}\left \{\mu \left (\sum_{i} [x_{i}
 \alpha_{i} s, \gamma] \right ) \right \} \right \}}\\
&=\displaystyle{\inf_{s \in S} \left \{\inf_{\gamma \in \Gamma}\left \{\mu \left (\sum_{i}
[ x_{i}, \alpha_{i}][ s, \gamma] \right ) \right \} \right \}}\\
& \geq \displaystyle  \inf_{ s \in S}\left \{\inf_{\gamma \in \Gamma}
\left \{\mu \left (\displaystyle \sum_{i} [x_{i}, \alpha_{i}] \right ) \right \} \right \}\\
 &=\mu \left (\displaystyle \sum_{i}[x_{i},
\alpha_{i}] \right ).
\end{array}
$$
So $\mu \subseteq (\mu^{+})^{+'}$.
Let $\displaystyle{\sum_{i}}[e_{i}, \delta_{i}]$ be the left unity of $S$. Then
$$
\begin{array}{ll}
\displaystyle \mu \left (\sum_{j}[x_{j}, \alpha_{j}] \right ) & = \mu \left (\displaystyle \sum_{j}[x_{j}, \alpha_{j}]\displaystyle \sum_{i}[e_{i}, \delta_{i}]
\right )\\
& \geq \displaystyle{\min \left \{\mu \left (\sum_{j}[x_{j}, \alpha_{j}][e_{1}, \delta_{1}] \right ),
\mu \left (\sum_{j}[x_{j}, \alpha_{j}][e_{2},
\delta_{2}] \right ),\ldots \right \}}\\
&\geq \displaystyle\inf_{ s
\in S}\left \{\inf_{\gamma \in \Gamma} \left \{\mu \left (\sum_{j}[x_{j}, \alpha_{j}][s,
\gamma] \right ) \right \} \right \}\\
&=(\mu^{+})^{+'} \left (\displaystyle{\sum_{j}}[x_{j},
\alpha_{j}] \right ).
\end{array}
$$
Thus, $(\mu^{+})^{+'} \subseteq \mu $ and so $(\mu^{+})^{+'} = \mu
$. Therefore, the correspondence $\sigma \mapsto \sigma^{+'}$ is a
bijection.

Now, let $ \sigma_{1}, \sigma_{2} \in$ Fh-I$(S)$ be such that $\sigma_{1} \subseteq
\sigma_{2}$. Then
$$\displaystyle{\sigma_{1}^{+'}\left (\sum_{i}[x_{i},
\alpha_{i}]\right )}
= \displaystyle{\inf_{s \in S}\left \{\sigma_{1}\left (\sum_{i}
x_{i} \alpha_{i}s \right ) \right \}}
\leq \displaystyle{\inf_{s \in S}\left \{\sigma_{2} \left (\sum_{i} x_{i}
\alpha_{i}s \right )\right \}}
=\sigma_{2}^{+'}\left (\displaystyle{\sum_{i}}[x_{i}, \alpha_{i}] \right ),$$ for all
$\displaystyle{\sum_{i}}[x_{i}, \alpha_{i}] \in L $.
Thus, $\sigma_{1}^{+'} \subseteq \sigma_{2}^{+'}$.
Similarly, we can deduce that if $\mu_{1} \subseteq \mu_{2}$, where $\mu_{1}, \mu_{2} \in$
Fh-I$(L)$. Then $\mu_{1}^{+} \subseteq \mu_{2}^{+}$.
We  show that $(\sigma_{1} \oplus \sigma_{2})^{+'}=\sigma_{1}^{+'} \oplus
\sigma_{2}^{+'}$ and $(\sigma_{1} \cap
\sigma_{2})^{+'}=\sigma_{1}^{+'} \cap \sigma_{2}^{+'}$.

Let $\displaystyle{ \sum_{i}[a_{i}, \alpha_{i}]} \in L $. Then
$$
\begin{array}{l}
((\sigma_{1} \oplus \sigma_{2})^{+'})\left (\displaystyle{\sum_{i}}[a_{i},
\alpha_{i}]\right ) \\
=\displaystyle{\inf_{s \in S}\left \{ (\sigma_{1} \oplus
\sigma_{2})\left (\sum_{i} a_{i} \alpha_{i}s \right ) \right \}}\\

 =\displaystyle\inf_{s \in S}\left \{ \sup \left \{\min \left \{  \sigma_{1}\left (\sum_{k} x_{k}
\delta_{k}s), \sigma_{2}(\sum_{j} y_{j} \beta_{j}s \right ) \right \} \ :
\sum_{i}a_{i} \alpha_{i}s=\sum_{k} x_{k} \delta_{k}s+\sum_{j}
y_{j} \beta_{j}s \right \} \right \}
\\
=\displaystyle{\sup \left \{\min \left \{\inf_{s \in S} \left \{ \sigma_{1}\left (\sum_{k} x_{k} \delta_{k}s \right ) \right \} ,
\inf_{s \in S} \left \{ \sigma_{2}\left (\sum_{j}y_{j} \beta_{j}s \right )\right \} \right \} \right \} }\\
=\displaystyle{\sup \left \{ \min \left \{ \sigma^{+'}_{1}\left (\sum_{k}[x_{k}, \delta_{k}]\right ),
 \sigma^{+'}_{2}\left (\sum_{j}[y_{j}, \beta_{j}]\right ) \right \} \right \}}
 \\
 =\left (\sigma^{+'}_{1} \oplus \sigma^{+'}_{2}\right )
 \left (\displaystyle{\sum_{i}}[a_{i}, \alpha_{i}]\right ).
 \end{array}
 $$
 Thus $(\sigma_{1} \oplus \sigma_{2})^{+'}=\sigma^{+'}_{1} \oplus
\sigma^{+'}_{2}$.
Again, we have
$$
\begin{array}{ll}
\displaystyle(\sigma_{1} \cap \sigma_{2})^{+'}\left (\displaystyle \sum_{i}[a_{i},
\alpha_{i}] \right ) &= \displaystyle \inf_{s \in S}\left \{\left (\sigma_{1} \cap \sigma_{2}\right )\left (\sum_{i}
a_{i}\alpha_{i}s \right )\right \}\\
&=\displaystyle{\inf_{s \in
S}\left \{\min \left \{\sigma_{1}\left (\sum_{i}a_{i}\alpha_{i}s \right ),\sigma_{2}\left (\sum_{i}a_{i}\alpha_{i}s \right ) \right \} \right \}}
\\
&=\displaystyle{\min \left \{\inf_{s \in S}\left \{ \sigma_{1}\left (\sum_{i}a_{i}\alpha_{i}s \right ) \right \},
\inf_{s \in S}\left \{ \sigma_{2}\left (\sum_{i}a_{i}\alpha_{i}s \right )\right \}\right  \}}\\
&=\displaystyle{\min \left \{\sigma_{1}^{+'}\left (\sum_{i}[a_{i},\alpha_{i}] \right ),
\sigma_{2}^{+'}\left (\sum_{i}[a_{i},\alpha_{i}]\right )\right \}}
\\
&=\left (\sigma_{1}^{+'} \cap
\sigma_{2}^{+'}\right )\left (\displaystyle{\sum_{i}}[a_{i}, \alpha_{i}]\right ).
\end{array}
$$
So $(\sigma_{1} \cap \sigma_{2})^{+'}=\sigma_{1}^{+'} \cap
\sigma_{2}^{+'}$.
Therefore, the mapping $\sigma \mapsto \sigma^{+'}$ is a lattice
isomorphism.
\end{proof}
Similarly, we can obtain the following theorem.

\begin{Theorem} The lattice of all fuzzy $h$-ideals of $S$ and the lattice of all fuzzy $h$-ideals of $R$ are isomorphic via the mapping $\sigma \mapsto
\sigma^{*'}$, where $\sigma
\in$ Fh-I$(S)$ and $\sigma^{*'} \in$ Fh-I$(R)$.\label{Th:3.9}\end{Theorem}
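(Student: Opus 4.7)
The plan is to imitate the proof of Theorem \ref{Th:3.8} almost verbatim, with the substitutions $L\leftrightarrow R$, $(\cdot)^{+}\leftrightarrow(\cdot)^{*}$, and $(\cdot)^{+'}\leftrightarrow(\cdot)^{*'}$ throughout, and with the roles of left and right interchanged where appropriate. First, Propositions \ref{Prop:3.6} and \ref{Prop:3.7} already give well-defined maps $\sigma\mapsto\sigma^{*'}$ from Fh-I$(S)$ to Fh-I$(R)$ and $\delta\mapsto\delta^{*}$ from Fh-I$(R)$ to Fh-I$(S)$, so the remaining task is to show these are mutually inverse lattice homomorphisms.

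Next I would verify $(\sigma^{*'})^{*}=\sigma$ for $\sigma\in$ Fh-I$(S)$. Unfolding the definitions gives $(\sigma^{*'})^{*}(x)=\inf_{\gamma\in\Gamma}\inf_{s\in S}\sigma(s\gamma x)$. The inclusion $\sigma\subseteq(\sigma^{*'})^{*}$ is immediate from $\sigma$ being a fuzzy left ideal, since $\sigma(s\gamma x)\ge\sigma(x)$. The reverse inclusion uses the left unity $\sum_{i}[e_{i},\delta_{i}]$: writing $x=\sum_{i}e_{i}\delta_{i}x$ and applying the sum property yields $\sigma(x)\ge\min_{i}\sigma(e_{i}\delta_{i}x)\ge\inf_{\gamma}\inf_{s}\sigma(s\gamma x)=(\sigma^{*'})^{*}(x)$, exactly mirroring the first half of Theorem \ref{Th:3.8}.

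The main obstacle is the other composition $(\delta^{*})^{*'}=\delta$ for $\delta\in$ Fh-I$(R)$. Unfolding gives $(\delta^{*})^{*'}(\sum_{j}[\alpha_{j},x_{j}])=\inf_{s}\inf_{\gamma}\delta(\sum_{j}[\gamma,s][\alpha_{j},x_{j}])$; the inclusion $\delta\subseteq(\delta^{*})^{*'}$ then follows because $\delta$ is a fuzzy left ideal of $R$. For $(\delta^{*})^{*'}\subseteq\delta$ I would first verify the dual of the identity used in Theorem \ref{Th:3.8}: namely that the right unity $\sum_{i}[\gamma_{i},f_{i}]$ of $S$ is a left multiplicative identity of $R$. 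Computing the product $(\sum_{i}[\gamma_{i},f_{i}])(\sum_{j}[\alpha_{j},x_{j}])=\sum_{i,j}[\gamma_{i},f_{i}\alpha_{j}x_{j}]$ and pairing with an arbitrary $s\in S$ gives $\sum_{i,j}s\gamma_{i}f_{i}\alpha_{j}x_{j}=\sum_{j}s\alpha_{j}x_{j}$ by $\sum_{i}s\gamma_{i}f_{i}=s$, so the two sides agree in $R$ by the defining congruence. With this in hand, the sum property of $\delta$ delivers $\delta(\sum_{j}[\alpha_{j},x_{j}])\ge\min_{i}\delta\bigl([\gamma_{i},\sum_{j}f_{i}\alpha_{j}x_{j}]\bigr)$, and the right-hand side is $\ge(\delta^{*})^{*'}(\sum_{j}[\alpha_{j},x_{j}])$ because each $[\gamma_{i},\sum_{j}f_{i}\alpha_{j}x_{j}]$ appears among the terms over which the defining double infimum is taken.

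Finally I would verify lattice preservation. Monotonicity of $\sigma\mapsto\sigma^{*'}$ and of its inverse is immediate from the definition as an infimum, and the equalities $(\sigma_{1}\cap\sigma_{2})^{*'}=\sigma_{1}^{*'}\cap\sigma_{2}^{*'}$ and $(\sigma_{1}\oplus\sigma_{2})^{*'}=\sigma_{1}^{*'}\oplus\sigma_{2}^{*'}$ reduce to the same sup/inf/min rearrangements used in Theorem \ref{Th:3.8}, with $\sum_{i}x_{i}\alpha_{i}s$ replaced everywhere by $\sum_{i}s\alpha_{i}x_{i}$ in the arguments of $\sigma_{1}$ and $\sigma_{2}$. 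The only genuinely new ingredient beyond dualising the Theorem \ref{Th:3.8} argument is the identity-in-$R$ verification above; once that is done, every remaining step is a symbol-for-symbol dualisation.
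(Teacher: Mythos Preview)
Your proposal is correct and follows exactly the approach the paper intends: the paper's own proof of Theorem~\ref{Th:3.9} consists solely of the sentence ``Similarly, we can obtain the following theorem,'' and your argument is precisely the symbol-for-symbol dualisation of Theorem~\ref{Th:3.8} that this sentence points to. Your explicit verification that the right unity $\sum_{i}[\gamma_{i},f_{i}]$ acts as a left identity in $R$ is the correct dual of the paper's use of the left unity as a right identity in $L$, and the rest matches step for step.
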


\begin{Corollary} FLh-I$(L)$, FRh-I$(L)$, FLh-I$(R)$ and FRh-I$(R)$ are complete
lattices.\label{Cor:3.10}\end{Corollary}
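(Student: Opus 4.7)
The plan is to derive Corollary 3.10 from one-sided analogs of the bijective correspondences in Theorems 3.8 and 3.9, combined with the elementary fact that the fuzzy left (resp.\ right) $h$-ideals of any $\Gamma$-hemiring form a complete lattice under inclusion.

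First I would verify that FLh-I$(S)$ is a complete lattice: the constant function $\chi_S \equiv 1$ is the top element, and the pointwise infimum $\bigl(\bigcap_i \sigma_i\bigr)(x) := \inf_i \sigma_i(x)$ of any family of fuzzy left $h$-ideals is again one, since the three defining inequalities (additive, left-absorptive, $h$-cancellation) each pass through $\inf$ by standard $\min$--$\inf$ manipulations. These two facts together yield the complete-lattice structure, with joins defined as meets of upper bounds. The identical argument handles FRh-I$(S)$.

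Next I would establish the one-sided analogs of Theorems 3.8 and 3.9: that $\sigma \mapsto \sigma^{+'}$ restricts to order-preserving bijections FLh-I$(S) \to$ FLh-I$(L)$ and FRh-I$(S) \to$ FRh-I$(L)$, and $\sigma \mapsto \sigma^{*'}$ gives analogous bijections onto FLh-I$(R)$ and FRh-I$(R)$. That these maps are well defined is already Propositions 3.5 and 3.7. For the inverse maps, the proof of Proposition 3.4 splits naturally into two halves: the computation $\mu^+(x\alpha y) \geq \mu^+(y)$ uses only the left ideal property of $\mu$, while the symmetric inequality uses only the right ideal property; hence the one-sided analog of Proposition 3.4 is immediate. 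That the two compositions recover the identity is essentially the first half of the proof of Theorem 3.8; that calculation invokes the left and right unities of $S$ (both assumed throughout the paper) but does not depend on two-sidedness of the ideals involved. Order preservation in both directions carries over verbatim from the closing paragraph of the same proof.

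Putting these ingredients together, each of FLh-I$(L)$, FRh-I$(L)$, FLh-I$(R)$ and FRh-I$(R)$ is order-isomorphic to a complete lattice and is therefore itself a complete lattice. The main obstacle is mostly bookkeeping—checking that every step of Theorem 3.8's proof remains valid under the weaker one-sided hypothesis. The one place that warrants genuine attention is the derivation of the one-sided version of Proposition 3.4; once that single observation is in hand, the rest of the argument is essentially free.
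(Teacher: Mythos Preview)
Your proposal is correct and follows essentially the same route as the paper's proof, which simply invokes the preceding isomorphism theorems together with the fact (cited from \cite{Sardar}) that FLh-I$(S)$, FRh-I$(S)$ and Fh-I$(S)$ are complete lattices. You are more explicit than the paper in noting that one-sided analogs of Theorems~\ref{Th:3.8} and~\ref{Th:3.9} are required and in deriving the complete-lattice structure of FLh-I$(S)$ directly rather than by citation, but the underlying strategy is identical.
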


\begin{proof} The corollary follows from the above theorems and
the fact that FLh-I$(S)$, FRh-I$(S)$, Fh-I$(S)$ are complete
lattices \cite{Sardar}.
\end{proof}
Now, by routine verification the following lemmas can be obtained.

\begin{Lemma} Let $I$ be an $h$-ideal (left $h$-ideal, right $h$-ideal) of a
$\Gamma$-hemiring $S$ and
$\lambda_{I}$ be the characteristic function of $I$. Then
$(\lambda_{I})^{+'}=\lambda_{(I^{+'})}$.\label{Lemma:3.11}\end{Lemma}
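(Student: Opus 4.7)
The plan is to unfold both sides of the claimed equality at an arbitrary element $\sum_i[x_i,\alpha_i]\in L$ and show that they take the same value. Since the characteristic function $\lambda_I$ is $\{0,1\}$-valued, the infimum in the definition of $(\lambda_I)^{+'}$ simplifies drastically: it equals $1$ exactly when every value in the infimand is $1$, and equals $0$ otherwise. So the whole argument reduces to matching this dichotomy against the indicator $\lambda_{I^{+'}}$.

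Concretely, I would write
\[
(\lambda_I)^{+'}\!\left(\sum_i[x_i,\alpha_i]\right)=\inf_{s\in S}\lambda_I\!\left(\sum_i x_i\alpha_i s\right),
\]
and then split into two cases. First, if $\sum_i[x_i,\alpha_i]\in I^{+'}$, then by the very definition of $I^{+'}$ we have $\sum_i x_i\alpha_i s\in I$ for every $s\in S$, so $\lambda_I(\sum_i x_i\alpha_i s)=1$ for all $s$, and the infimum is $1$; meanwhile $\lambda_{I^{+'}}(\sum_i[x_i,\alpha_i])=1$. Second, if $\sum_i[x_i,\alpha_i]\notin I^{+'}$, then there exists some $s_0\in S$ with $\sum_i x_i\alpha_i s_0\notin I$; hence $\lambda_I(\sum_i x_i\alpha_i s_0)=0$, forcing the infimum to $0$, and also $\lambda_{I^{+'}}(\sum_i[x_i,\alpha_i])=0$.

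Both cases yield equality at the arbitrary element $\sum_i[x_i,\alpha_i]$, so $(\lambda_I)^{+'}=\lambda_{(I^{+'})}$ as fuzzy subsets of $L$. There is essentially no obstacle here: the content of the lemma is entirely bookkeeping, tying the $\{0,1\}$-valued nature of characteristic functions to the universal quantifier hidden in the definition of $I^{+'}$. The only care needed is to notice that the hypothesis that $I$ is an $h$-ideal is not actually required for this particular statement, so I would not invoke it in the proof; it is relevant only to ensure that both $\lambda_I$ and $\lambda_{I^{+'}}$ are themselves fuzzy $h$-ideals on their respective hemirings, as guaranteed by Proposition~\ref{Prop:3.5}.
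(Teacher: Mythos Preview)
Your argument is correct and is precisely the routine verification the paper alludes to (the paper gives no explicit proof of this lemma).

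One small correction to your closing remark: the hypothesis on $I$ is not entirely idle. In the paper's Definition of $Q^{+'}$, the set $\bigl(\sum_i[x_i,\alpha_i]\bigr)S$ consists of \emph{all finite sums} $\sum_{i,k} x_i\alpha_i s_k$ with $s_k\in S$, not merely the single-$s$ expressions $\sum_i x_i\alpha_i s$. Thus $\sum_i[x_i,\alpha_i]\notin I^{+'}$ a priori only gives you some finite sum $\sum_{i,k} x_i\alpha_i s_k\notin I$; to extract a \emph{single} $s_0$ with $\sum_i x_i\alpha_i s_0\notin I$, as your second case asserts, you need $I$ to be closed under addition (so that if each $\sum_i x_i\alpha_i s_k\in I$ then their sum would also lie in $I$). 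That closure is of course part of $I$ being a (left, right, two-sided) $h$-ideal, so your proof goes through as written for the lemma as stated; only the aside that the hypothesis is unnecessary should be dropped.
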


\begin{Lemma} Let $I$ be an $h$-ideal (left $h$-ideal, right $h$-ideal)
of the left operator hemiring $L$ of a $\Gamma$-hemiring $S$ and
$\lambda_{I}$ be the characteristic function of $I$. Then
$(\lambda_{I})^{+}=\lambda_{(I^{+})}$.\label{Lemma:3.12}\end{Lemma}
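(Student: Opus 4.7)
The plan is to prove the equality $(\lambda_{I})^{+}(x) = \lambda_{I^{+}}(x)$ pointwise for each $x \in S$ by a direct two-case analysis, unpacking the definitions of both the $(\cdot)^{+}$ operator on fuzzy subsets (Definition~\ref{Def:3.1}) and the set operator $I^{+} = \{a \in S : [a,\Gamma] \subseteq I\}$ recalled in the preliminaries. Since $\lambda_{I}$ takes only the values $0$ and $1$, the infimum $\inf_{\gamma \in \Gamma}\lambda_{I}([x,\gamma])$ is itself either $0$ or $1$, so the proof reduces to checking which value it takes based on membership of $x$ in $I^{+}$.

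First, I would handle the case $x \in I^{+}$. By the definition of $I^{+}$, this means $[x,\gamma] \in I$ for every $\gamma \in \Gamma$, so $\lambda_{I}([x,\gamma]) = 1$ for all $\gamma$, and therefore $(\lambda_{I})^{+}(x) = \inf_{\gamma \in \Gamma}\{1\} = 1 = \lambda_{I^{+}}(x)$. Second, I would handle the case $x \notin I^{+}$. Here the failure of the inclusion $[x,\Gamma] \subseteq I$ supplies some $\gamma_{0} \in \Gamma$ with $[x,\gamma_{0}] \notin I$, hence $\lambda_{I}([x,\gamma_{0}]) = 0$, which forces $(\lambda_{I})^{+}(x) = 0 = \lambda_{I^{+}}(x)$. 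Combining the two cases gives the required identity of fuzzy subsets.

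There is essentially no obstacle here; the statement is a direct translation of the definitions, which is presumably why the authors relegate it to ``routine verification.'' The only mild point to be careful about is that one must argue both inclusions for $I^{+}$-membership via the infimum correctly, i.e., observing that $\inf_{\gamma} \lambda_{I}([x,\gamma]) = 1$ forces $\lambda_{I}([x,\gamma]) = 1$ for every $\gamma$ (since the values lie in $\{0,1\}$), which is exactly the characterization of $x \in I^{+}$. No appeal to the hemiring operations, to the $h$-ideal closure condition, or to Lemma~\ref{Lemma:3.3} is needed; the argument works identically in the left $h$-ideal and right $h$-ideal cases because $I^{+}$ is defined by the same formula in each instance.
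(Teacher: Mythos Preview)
Your proposal is correct and is precisely the ``routine verification'' the paper alludes to without writing out; the pointwise two-case analysis via the definitions of $(\cdot)^{+}$ and $I^{+}$ is the intended argument.
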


\begin{Lemma} Let $I$ be an $h$-ideal (left $h$-ideal, right $h$-ideal) of
a $\Gamma$-hemiring $S$ and $\lambda_{I}$ be the characteristic
function of $I$. Then
$(\lambda_{I})^{*'}=\lambda_{(I^{*'})}$.\label{Lemma:3.13}\end{Lemma}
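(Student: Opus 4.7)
The plan is to unfold the two sides for an arbitrary element $\sum_i [\alpha_i, x_i] \in R$ and check they agree. Writing $\xi = \sum_i [\alpha_i, x_i]$, by Definition~\ref{Def:3.2} we have
\[
(\lambda_I)^{*'}(\xi) \;=\; \inf_{s \in S} \bigl\{\lambda_I\bigl(\textstyle\sum_i s \alpha_i x_i\bigr)\bigr\},
\]
which takes only the values $0$ and $1$. It equals $1$ exactly when $\sum_i s \alpha_i x_i \in I$ for every $s \in S$, and $0$ otherwise. Meanwhile $\lambda_{I^{*'}}(\xi) = 1$ iff $\xi \in I^{*'}$, i.e.\ iff every finite sum $\sum_{i,k} s_k \alpha_i x_i$ lies in $I$. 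So the whole proof reduces to showing the equivalence
\[
(\forall s \in S)\ \textstyle\sum_i s \alpha_i x_i \in I \quad \Longleftrightarrow \quad (\forall s_k \in S)\ \textstyle\sum_{i,k} s_k \alpha_i x_i \in I. \tag{$\ast$}
\]

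The direction $(\Leftarrow)$ is immediate by taking a single index $k$. For $(\Rightarrow)$, I would use the distributive law (3) in the definition of a $\Gamma$-hemiring applied in the first slot: given any finite family $\{s_k\} \subseteq S$, set $t = \sum_k s_k \in S$ and compute
\[
\sum_i t \alpha_i x_i \;=\; \sum_i\Bigl(\sum_k s_k\Bigr) \alpha_i x_i \;=\; \sum_{i,k} s_k \alpha_i x_i,
\]
so that membership in $I$ for the single element $t$ forces membership for the full double sum. This is the one place where one has to say something rather than wave at the definitions, but it is genuinely just distributivity plus the fact that $(S,+)$ is an additive semigroup.

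Once $(\ast)$ is established, the two cases finish the argument: if $\xi \in I^{*'}$ then the infimum defining $(\lambda_I)^{*'}(\xi)$ is over constants equal to $1$, giving $1 = \lambda_{I^{*'}}(\xi)$; if $\xi \notin I^{*'}$ then by $(\ast)$ some $s \in S$ witnesses $\sum_i s \alpha_i x_i \notin I$, making $\lambda_I$ vanish there and the infimum equal to $0 = \lambda_{I^{*'}}(\xi)$. The left- and right-$h$-ideal variants are identical, as the $h$-closure property of $I$ is not needed for this verification; only closure of $I$ under addition (which holds for any of the three kinds of ideal) enters, via $(\ast)$.
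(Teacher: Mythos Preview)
Your argument is correct and is precisely the ``routine verification'' the paper gestures at without writing out; there is no separate proof in the paper to compare against. One small slip: the distributive law you invoke for $\bigl(\sum_k s_k\bigr)\alpha_i x_i = \sum_k s_k\alpha_i x_i$ is axiom (1), $(a+b)\alpha c = a\alpha c + b\alpha c$, not axiom (3) (which distributes over sums in $\Gamma$). Also, your closing remark that closure of $I$ under addition is what drives $(\ast)$ is slightly off---your own argument for $(\Rightarrow)$ uses only closure of $S$ under addition and distributivity, so in fact $(\ast)$ holds for an arbitrary subset $I \subseteq S$; the ideal hypotheses are not needed at all for this lemma.
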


\begin{Lemma} Let $I$ be an $h$-ideal (left $h$-ideal, right $h$-ideal)
of the right operator hemiring $R$ of a $\Gamma$-hemiring $S$ and
$\lambda_{I}$ be the characteristic function of $I$. Then
$(\lambda_{I})^{*}=\lambda_{(I^{*})}$.\label{Lemma:3.14}\end{Lemma}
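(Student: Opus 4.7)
The plan is to prove the equality pointwise: for each $x \in S$, I will evaluate both $(\lambda_{I})^{*}(x)$ and $\lambda_{(I^{*})}(x)$ by unwinding Definition \ref{Def:3.2} together with the definition $P^{*}=\{a \in S:[\Gamma,a]\subseteq P\}$ of the $*$-operation on subsets of $R$.

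First I would fix an arbitrary $x \in S$ and expand using Definition \ref{Def:3.2}:
\[
(\lambda_{I})^{*}(x)=\inf_{\gamma \in \Gamma}\{\lambda_{I}([\gamma,x])\}.
\]
The key observation is that $\lambda_{I}$ takes only the values $0$ and $1$, so the infimum on the right is either $1$ or $0$. It equals $1$ if and only if $\lambda_{I}([\gamma,x])=1$ for every $\gamma \in \Gamma$, i.e., if and only if $[\gamma,x]\in I$ for all $\gamma \in \Gamma$, which by definition of $I^{*}$ is exactly the statement $x \in I^{*}$.

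Next I would do the matching case analysis for the right-hand side: by definition of the characteristic function, $\lambda_{(I^{*})}(x)=1$ if $x \in I^{*}$ and $\lambda_{(I^{*})}(x)=0$ otherwise. Combined with the previous step, both functions agree on the set $I^{*}$ (taking the value $1$) and on its complement in $S$ (taking the value $0$), so they are equal as functions on $S$.

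I do not expect any real obstacle here; the entire argument is a routine unfolding of definitions and relies only on the two-valuedness of characteristic functions to convert the infimum of $\lambda_{I}([\gamma,x])$ over $\gamma \in \Gamma$ into the universal statement $[\Gamma,x]\subseteq I$. The same template applied in Lemmas \ref{Lemma:3.11}, \ref{Lemma:3.12} and \ref{Lemma:3.13} works verbatim, with the only change being the obvious replacement of $[\gamma,x]$ by $[x,\gamma]$ (or of the infimum over $S$ by the infimum over $\Gamma$) according to whether one is dealing with the $+$/$+'$ or $*$/$*'$ variants, and whether one starts from an $h$-ideal of $S$, of $L$ or of $R$.
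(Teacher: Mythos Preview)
Your proof is correct and is precisely the routine verification the paper has in mind; the paper does not give an explicit argument for Lemma~\ref{Lemma:3.14} (it merely says the lemmas can be obtained ``by routine verification''), and your pointwise unfolding of Definition~\ref{Def:3.2} together with the two-valuedness of $\lambda_{I}$ is exactly that routine.
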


Now, we revisit the following theorem which is due to Dutta and Sardar \cite{re:Dutta}.

\begin{Theorem} The lattice of all $h$-ideals of $S$ and the lattices of all $h$-ideals of $L$ are isomorphic via the mapping $I \mapsto
I^{+'}$, where $I$ denotes an $h$-ideal of
$S$.\label{Th:3.15}\end{Theorem}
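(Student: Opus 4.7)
The plan is to transfer the crisp statement from its fuzzy counterpart (Theorem \ref{Th:3.8}) through characteristic functions, using Lemmas \ref{Lemma:3.11} and \ref{Lemma:3.12} as a bridge, together with the elementary fact that a subset $I$ of $S$ (respectively, of $L$) is an $h$-ideal if and only if its characteristic function $\lambda_{I}$ is a fuzzy $h$-ideal.

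First I would verify that the map is well defined. If $I$ is an $h$-ideal of $S$, then $\lambda_{I}$ lies in Fh-I$(S)$, so Proposition \ref{Prop:3.5} gives $(\lambda_{I})^{+'}\in$ Fh-I$(L)$; Lemma \ref{Lemma:3.11} then identifies this fuzzy $h$-ideal with $\lambda_{I^{+'}}$, forcing $I^{+'}$ to be an $h$-ideal of $L$. Next I would establish bijectivity by exhibiting $J\mapsto J^{+}$ as the two-sided inverse: specializing the identity $(\sigma^{+'})^{+}=\sigma$ proved inside Theorem \ref{Th:3.8} to $\sigma=\lambda_{I}$ and applying Lemmas \ref{Lemma:3.11}--\ref{Lemma:3.12} yields $\lambda_{(I^{+'})^{+}}=\lambda_{I}$, hence $(I^{+'})^{+}=I$; dually, $(J^{+})^{+'}=J$ by specializing $(\mu^{+})^{+'}=\mu$ to $\mu=\lambda_{J}$.

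For the lattice structure, order preservation in both directions follows from the monotonicity clause of Theorem \ref{Th:3.8}: $I_{1}\subseteq I_{2}$ iff $\lambda_{I_{1}}\subseteq\lambda_{I_{2}}$ iff $\lambda_{I_{1}^{+'}}\subseteq\lambda_{I_{2}^{+'}}$ iff $I_{1}^{+'}\subseteq I_{2}^{+'}$. Preservation of meets is immediate from $\lambda_{I_{1}\cap I_{2}}=\lambda_{I_{1}}\cap\lambda_{I_{2}}$ combined with the identity $(\sigma_{1}\cap\sigma_{2})^{+'}=\sigma_{1}^{+'}\cap\sigma_{2}^{+'}$ of Theorem \ref{Th:3.8}, giving $(I_{1}\cap I_{2})^{+'}=I_{1}^{+'}\cap I_{2}^{+'}$.

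The main obstacle is preservation of joins. In the lattice of $h$-ideals, the join is the $h$-closure of the sum, and this does not translate directly into the fuzzy operation $\oplus$ under $\lambda$: in general $\lambda_{I_{1}+I_{2}}$ need not coincide with $\lambda_{I_{1}}\oplus\lambda_{I_{2}}$, and neither need be an $h$-ideal before taking the $h$-closure, so the join identity of Theorem \ref{Th:3.8} cannot be invoked verbatim on characteristic functions. I would sidestep this by an abstract lattice argument: an order-preserving bijection between complete lattices that preserves arbitrary meets automatically preserves arbitrary joins, because a join can be written as the meet of the (nonempty) collection of common upper bounds, and the bijection restricts to a bijection between the upper-bound sets on the two sides. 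Completeness of the $h$-ideal lattices of $S$ and $L$ is standard, and in fact can be read off \emph{a posteriori} from the fuzzy isomorphism together with Lemmas \ref{Lemma:3.11}--\ref{Lemma:3.12}.
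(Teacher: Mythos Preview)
Your approach is essentially the paper's own: both transfer Theorem~\ref{Th:3.8} to the crisp setting via characteristic functions, using Lemmas~\ref{Lemma:3.11} and~\ref{Lemma:3.12} as the bridge. The paper is in fact terser---it checks injectivity, surjectivity (via $J\mapsto J^{+}$), and one direction of inclusion preservation, then stops.

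Your perceived ``main obstacle'' about joins is illusory. The iff chain you wrote, $I_{1}\subseteq I_{2}\Leftrightarrow I_{1}^{+'}\subseteq I_{2}^{+'}$, already makes $I\mapsto I^{+'}$ an \emph{order} isomorphism, and any order isomorphism between posets automatically preserves whatever meets and joins exist, since these are determined purely by the order relation. No appeal to completeness, to the $\oplus$ identity, or to your abstract meet-to-join argument is needed; the separate verification of meet preservation is likewise redundant.
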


\begin{proof} First, we  show that the mapping $I \mapsto
I^{+'}$ is one-one. Let $I_{1}$ and $I_{2}$ be two $h$-ideals of $S$
such that $I_{1} \neq I_{2}$. Then $\lambda_{I_{1}}$ and
$\lambda_{I_{2}}$ are fuzzy $h$-ideals of $S$, where $\lambda_{I_{1}}$
and $\lambda_{I_{2}}$ are characteristic functions of $I_{1}$ and
$I_{2}$, respectively. Evidently, $\lambda_{I_{1}} \neq
\lambda_{I_{2}}$. Then by Theorem \ref{Th:3.8},
$\lambda_{I_{1}}^{+'} \neq \lambda_{I_{2}}^{+'}$. Hence by Lemma
\ref{Lemma:3.11}, $\lambda_{I_{1}^{+'}} \neq \lambda_{I_{2}^{+'}}$
whence $I_{1}^{+'} \neq I_{2}^{+'}$. Consequently, the mapping $I
\mapsto I^{+'}$ is one-one.
Now, let $J$ be an $h$-ideal of $L$. Then $\lambda_{J}$ is a fuzzy $h$-ideal
of $L$. By Proposition \ref{Prop:3.4} and Theorem \ref{Th:3.8}, $\lambda_{J}^{+}$ is a fuzzy $h$-ideal of $S$.
Now, by Lemma 3.12, $\lambda_{J}^{+}=\lambda_{{J}^{+}}$ and
consequently, $J^{+}$ is an $h$-ideal of $S$. Thus, the mapping is
onto.

Now, let $I_{1},~I_{2}$ be two $h$-ideals of $S$ such that $I_{1} \subseteq
I_{2}$. Then $\lambda_{I_{1}} \subseteq \lambda_{I_{2}}$ and by
Theorem \ref{Th:3.8}, $\lambda_{I_{1}}^{+'} \subseteq
\lambda_{I_{2}}^{+'}$ and by Lemma \ref{Lemma:3.11},
$\lambda_{I_{1}^{+'}} \subseteq \lambda_{I_{2}^{+'}}$ and
consequently $I_{1}^{+'} \subseteq I_{2}^{+'}$. Thus, the mapping
is inclusion preserving. Hence the theorem.
\end{proof}
Similarly, we can revisit the following theorem:

\begin{Theorem} The lattice of all $h$-ideals of $S$ and the lattice of all $h$-ideals of $R$ are isomorphic via the mapping $I \mapsto
I^{*'}$, where $I$ is an $h$-ideal of
$S$.\label{Th:3.16}\end{Theorem}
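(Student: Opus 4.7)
The plan is to mirror the proof of Theorem \ref{Th:3.15}, replacing the left operator hemiring $L$ with the right operator hemiring $R$ and invoking the corresponding $*$/$*'$ machinery (Proposition \ref{Prop:3.6}, Proposition \ref{Prop:3.7}, Theorem \ref{Th:3.9}, Lemma \ref{Lemma:3.13} and Lemma \ref{Lemma:3.14}) in place of their $+$/$+'$ counterparts. The overall strategy is to transfer each set-theoretic claim about $h$-ideals to a statement about their characteristic functions, resolve it in the already-established fuzzy setting, and then translate back.

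First I would establish injectivity. Take two distinct $h$-ideals $I_{1},I_{2}$ of $S$ and observe that their characteristic functions $\lambda_{I_{1}},\lambda_{I_{2}}$ are distinct fuzzy $h$-ideals of $S$. By Theorem \ref{Th:3.9}, the assignment $\sigma\mapsto\sigma^{*'}$ is a lattice isomorphism $\mathrm{Fh\text{-}I}(S)\to\mathrm{Fh\text{-}I}(R)$, so $\lambda_{I_{1}}^{*'}\neq\lambda_{I_{2}}^{*'}$. Lemma \ref{Lemma:3.13} identifies these as $\lambda_{I_{1}^{*'}}$ and $\lambda_{I_{2}^{*'}}$, and hence $I_{1}^{*'}\neq I_{2}^{*'}$.

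Next I would establish surjectivity. Let $J$ be an $h$-ideal of $R$. Then $\lambda_{J}$ is a fuzzy $h$-ideal of $R$, and by Proposition \ref{Prop:3.6} the fuzzy subset $\lambda_{J}^{*}$ is a fuzzy $h$-ideal of $S$. Lemma \ref{Lemma:3.14} gives $\lambda_{J}^{*}=\lambda_{J^{*}}$, so that $J^{*}$ is an ordinary $h$-ideal of $S$. Using the bijectivity portion of Theorem \ref{Th:3.9} (which provides $(\mu^{*})^{*'}=\mu$ for $\mu\in\mathrm{Fh\text{-}I}(R)$) together with Lemma \ref{Lemma:3.13} applied to $J^{*}$, one concludes $(J^{*})^{*'}=J$, which exhibits the required preimage.

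Finally, I would verify that the mapping preserves inclusion. If $I_{1}\subseteq I_{2}$ are $h$-ideals of $S$, then $\lambda_{I_{1}}\subseteq\lambda_{I_{2}}$; by the order-preservation established in Theorem \ref{Th:3.9} we get $\lambda_{I_{1}}^{*'}\subseteq\lambda_{I_{2}}^{*'}$, and Lemma \ref{Lemma:3.13} converts this to $\lambda_{I_{1}^{*'}}\subseteq\lambda_{I_{2}^{*'}}$, hence $I_{1}^{*'}\subseteq I_{2}^{*'}$. The reverse implication follows symmetrically using the inverse correspondence $J\mapsto J^{*}$ and Lemma \ref{Lemma:3.14}.

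I do not anticipate a genuine obstacle here, since every ingredient needed is already packaged as a previous result; the only real care point is bookkeeping, namely making sure to use the right-operator analogues ($*,*'$, Proposition \ref{Prop:3.6}, Proposition \ref{Prop:3.7}, Theorem \ref{Th:3.9}, Lemma \ref{Lemma:3.13}, Lemma \ref{Lemma:3.14}) rather than their left-operator versions at each step. Beyond that, the proof is purely a structural transcription of the proof of Theorem \ref{Th:3.15}.
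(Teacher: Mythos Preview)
Your proposal is correct and follows exactly the approach the paper intends: the paper proves Theorem \ref{Th:3.16} simply by stating that it is obtained ``similarly'' to Theorem \ref{Th:3.15}, and your argument is precisely that right-operator transcription, invoking Theorem \ref{Th:3.9}, Proposition \ref{Prop:3.6}, and Lemmas \ref{Lemma:3.13}--\ref{Lemma:3.14} in place of their $+$/$+'$ counterparts. If anything, you are slightly more careful than the paper's proof of Theorem \ref{Th:3.15}, since you explicitly verify $(J^{*})^{*'}=J$ for surjectivity and note the reverse inclusion-preservation needed for a genuine lattice isomorphism.
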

\begin{Proposition}\label{composition} For any two fuzzy $h$-ideals $\mu$ and $\nu$ of $S$,
$(\mu o_{h}\nu)^{+^{'}}=((\mu)^{+^{'}}o_{h}(\nu)^{+^{'}})$.
\end{Proposition}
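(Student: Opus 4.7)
The plan is to leverage the lattice isomorphism $\sigma\mapsto\sigma^{+'}$ between $\mathrm{Fh}\text{-}\mathrm{I}(S)$ and $\mathrm{Fh}\text{-}\mathrm{I}(L)$ of Theorem~\ref{Th:3.8}. Both $(\mu\circ_h\nu)^{+'}$ and $\mu^{+'}\circ_h\nu^{+'}$ are fuzzy $h$-ideals of $L$---the former by Proposition~\ref{Prop:3.5} applied to the fuzzy $h$-ideal $\mu\circ_h\nu$, and the latter because the generalised $h$-product of two fuzzy $h$-ideals of a hemiring is again a fuzzy $h$-ideal---so by the injectivity of $\sigma\mapsto\sigma^{+'}$ it suffices to prove the single identity $\bigl(\mu^{+'}\circ_h\nu^{+'}\bigr)^{+}=\mu\circ_h\nu$ in $\mathrm{Fh}\text{-}\mathrm{I}(S)$.

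The first inclusion I would establish is $(\mu\circ_h\nu)^{+'}(X)\geq(\mu^{+'}\circ_h\nu^{+'})(X)$ for every $X\in L$. Fix an $L$-representation $X+\sum_{p}A_pB_p+Z=\sum_{p}C_pD_p+Z$ and let $\sum_{i}[e_i,\delta_i]$ be the left unity of $S$. Expanding $A_p=\sum_k[a_{pk},\tau_{pk}]$ and $B_p=\sum_l[b_{pl},\sigma_{pl}]$ and invoking $\sum_i e_i\delta_i b_{pl}=b_{pl}$, one checks the identity
\[
(A_pB_p)\cdot s \;=\; \sum_{i}(A_p\cdot e_i)\,\delta_i\,(B_p\cdot s)\qquad\text{for every }s\in S.
\]
Substituting this into the $L$-equation evaluated at $s$ produces an $S$-representation of $Xs$ whose associated minimum is bounded below by $\min_{p}\{\mu^{+'}(A_p),\nu^{+'}(B_p),\mu^{+'}(C_p),\nu^{+'}(D_p)\}$; this follows from the inequalities $\mu^{+'}(A_p)=\inf_t\mu(A_p t)\leq\mu(A_p e_i)$ and $\nu^{+'}(B_p)\leq\nu(B_p s)$ (and their analogues for $C_p,D_p$). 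Taking $\inf_s$ and then the supremum over $L$-representations yields the desired inequality, and applying $(\cdot)^{+}$ together with $((\mu\circ_h\nu)^{+'})^{+}=\mu\circ_h\nu$ (Theorem~\ref{Th:3.8}) gives $\mu\circ_h\nu\geq\bigl(\mu^{+'}\circ_h\nu^{+'}\bigr)^{+}$.

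For the opposite inclusion, since $\bigl(\mu^{+'}\circ_h\nu^{+'}\bigr)^{+}(x)=\inf_{\gamma}(\mu^{+'}\circ_h\nu^{+'})([x,\gamma])$, it suffices to show $(\mu^{+'}\circ_h\nu^{+'})([x,\gamma])\geq(\mu\circ_h\nu)(x)$ for each $x\in S$ and $\gamma\in\Gamma$. Given any $S$-representation $x+\sum_p a_p\gamma_p b_p+z=\sum_p c_p\delta_p d_p+z$, I would lift it to the $L$-representation
\[
[x,\gamma]+\sum_{p}[a_p,\gamma_p][b_p,\gamma]+[z,\gamma] \;=\; \sum_{p}[c_p,\delta_p][d_p,\gamma]+[z,\gamma],
\]
which is valid by the identities $[u,\gamma]+[v,\gamma]=[u+v,\gamma]$ and $[a,\beta][b,\gamma]=[a\beta b,\gamma]$ in $L$. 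Since $\mu,\nu$ are fuzzy right $h$-ideals of $S$ one has $\mu^{+'}([a_p,\gamma_p])=\inf_t\mu(a_p\gamma_p t)\geq\mu(a_p)$ and $\nu^{+'}([b_p,\gamma])\geq\nu(b_p)$, with the same bounds for the right-hand terms, so the minimum of this $L$-representation dominates the minimum of the original $S$-representation. Taking the supremum over $S$-representations of $x$ yields $(\mu^{+'}\circ_h\nu^{+'})([x,\gamma])\geq(\mu\circ_h\nu)(x)$, as required.

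Combining the two inclusions gives $\bigl(\mu^{+'}\circ_h\nu^{+'}\bigr)^{+}=\mu\circ_h\nu=\bigl((\mu\circ_h\nu)^{+'}\bigr)^{+}$, and the bijectivity from Theorem~\ref{Th:3.8} then forces $\mu^{+'}\circ_h\nu^{+'}=(\mu\circ_h\nu)^{+'}$. The technical hinge of the argument is the identity $(A_pB_p)s=\sum_i(A_pe_i)\delta_i(B_ps)$: neither of the naive decompositions $\sum_k a_{pk}\tau_{pk}(B_ps)$ or $\sum_l(A_pb_{pl})\sigma_{pl}s$ simultaneously provides a factor controlled by $\mu^{+'}(A_p)$ on the left and one controlled by $\nu^{+'}(B_p)$ on the right, and the left unity of $S$ is precisely what makes the transfer from an $L$-representation to an $S$-representation carry the correct bounds.
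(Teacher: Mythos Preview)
Your argument is correct but follows a genuinely different route from the paper. The paper gives a one-line direct computation: it fixes an $L$-representation of $\sum_i[x_i,e_i]$, unfolds the definitions of $\mu^{+'}\circ_h\nu^{+'}$ and $(\mu\circ_h\nu)^{+'}$, and asserts a chain of equalities in which the $\inf_{s\in S}$ is freely interchanged with the supremum over representations and the $\min_j$. No unity is invoked and no auxiliary facts about $\circ_h$ are used. Your proof, by contrast, splits into two honest inclusions and then appeals to the bijectivity of $\sigma\mapsto\sigma^{+'}$ from Theorem~\ref{Th:3.8}; the key technical device is the identity $(AB)s=\sum_i(Ae_i)\delta_i(Bs)$ coming from the left unity, which lets you manufacture an $S$-representation of $Xs$ whose factors are controlled by $\mu^{+'}(A)$ and $\nu^{+'}(B)$. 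The price you pay is the need for the side facts that $\mu\circ_h\nu\in\mathrm{Fh}\text{-}\mathrm{I}(S)$ and $\mu^{+'}\circ_h\nu^{+'}\in\mathrm{Fh}\text{-}\mathrm{I}(L)$, and the explicit use of the left unity (which the paper assumes throughout anyway). What you gain is that each inequality is justified pointwise, so you never have to commute an infimum past a supremum---a step the paper's chain of equalities leaves unexamined.
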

\begin{proof} Suppose that $$\displaystyle{\sum_{i}} [x_{i},
e_{i}],\left (\displaystyle{\sum_{i}} [a_{i},
\alpha_{i}]\right )_{j},\displaystyle{\sum_{i}} [z_{i},
\eta_{i}],
\left (\displaystyle{\sum_{i}} [b_{i},
\beta_{i}]\right )_{j},\left (\displaystyle{\sum_{i}} [c_{i},
\gamma_{i}]\right )_{j},\left (\displaystyle{\sum_{i}} [d_{i},
\delta_{i}]\right )_{j}\in L$$
 be
such that $$
\begin{array}{l}
\displaystyle{\sum_{i}} [x_{i},
e_{i}]+\displaystyle{\sum_{j}}\left (\displaystyle{\sum_{i}} [a_{i},
\alpha_{i}]\right )_{j}\left (\displaystyle{\sum_{i}} [c_{i},
\gamma_{i}]\right )_{j}+\displaystyle{\sum_{i}} [z_{i},
\eta_{i}]\\
=\displaystyle{\sum_{j}}\left (\displaystyle{\sum_{i}} [b_{i},
\beta_{i}]\right )_{j}\left (\displaystyle{\sum_{i}} [d_{i},
\delta_{i}]\right )_{j}+\displaystyle{\sum_{i}} [z_{i}, \eta_{i}].
\end{array}
$$
Then
$$
\begin{array}{l}
\left ((\mu)^{+^{'}}o_{h}(\nu)^{+^{'}} \right )\left (\displaystyle{\sum_{i}} [x_{i},
e_{i}]\right )\\

=\sup \left \{ \underset{j}{\min} \left \{ (\mu)^{+^{'}}\left ( \left (\displaystyle{\sum_{i}}
[a_{i}, \alpha_{i}]\right )_{j} \right ),(\nu)^{+^{'}}\left (\left (\displaystyle{\sum_{i}}
[c_{i}, \gamma_{i}]\right )_{j}\right ), \right. \right.\\
\ \ \ \ \ \ \ \ \ \ \left. \left.
 (\mu)^{+^{'}}\left (\left (\displaystyle{\sum_{i}}
[b_{i}, \beta_{i}]\right )_{j}\right ),(\nu)^{+^{'}}\left (\left (\displaystyle{\sum_{i}}
[d_{i},
\delta_{i}]\right )_{j}\right )\right \} \right \}\\

=\sup\left \{ \underset{j}{\min}\left \{ \underset{s\in S
}{\inf}\left \{ \mu \left (\left (\displaystyle{\sum_{i}}a_{i}
\alpha_{i}s\right )_{j}\right ) \right \},\underset{s\in S
}{\inf}\left \{\nu \left (\left (\displaystyle{\sum_{i}}c_{i}
\gamma_{i}s\right )_{j}\right )\right \}, \right. \right.\\
\ \ \ \ \ \ \ \ \ \ \left. \left.
\underset{s\in S
}{\inf}
\left \{\mu
\left (
\left (\displaystyle{\sum_{i}}b_{i}
\beta_{i}s\right )_{j}\right )\right \} ,\underset{s\in S }{\inf}\left \{ \nu \left (\left (\displaystyle{\sum_{i}}d_{i}\delta_{i}s\right )_{j}\right )\right \} \right \} \right \}\\

=\underset{s\in
S}{\inf}\left \{ \sup \left \{ \underset{j}{\min}\left \{\mu \left (\left (\displaystyle{\sum_{i}}a_{i}\alpha_{i}s\right )_{j}\right ),\nu \left (\left (\displaystyle{\sum_{i}}c_{i}
\gamma_{i}s\right )_{j}\right ),\right. \right. \right.\\
\ \ \ \ \ \ \ \ \ \ \left. \left.
\mu \left (\left (\displaystyle{\sum_{i}}b_{i}
\beta_{i}s \right )_{j}\right ),\nu \left (\left (\displaystyle{\sum_{i}}(d_{i}\delta_{i}s\right )_{j}\right )\right \} \right \}  \\

=\underset{s\in S}{\inf}\left \{(\mu o_{h} \nu) \left (\displaystyle{\sum_{i}} x_{i}e_{i}s\right )\right \}\\
=(\mu o_{h}\nu)^{+^{'}}\left (\displaystyle{\sum_{i}} [x_{i},
e_{i}]\right ).
\end{array}
$$
\end{proof}
\begin{Remark} Similarly, we can show that for any two fuzzy $h$-ideals $\mu$ and $\nu$ of $S$,
$(\mu\Gamma_{h}\nu)^{+^{'}}=\mu^{+^{'}}\Gamma_{h}\nu^{+^{'}}$\end{Remark}
\begin{proof} The proof is similar to Proposition \ref{composition} \end{proof}
\begin{Proposition}\label{prime+}
If $\zeta$ is a prime (respectively, semiprime) fuzzy $h$-ideal of $S$, then
$\zeta^{+^{'}}$ (respectively, $\zeta^{*^{'}})$ is a prime (respectively, semiprime) fuzzy $h$-ideal of
$L$ (respectively, $R$).
\end{Proposition}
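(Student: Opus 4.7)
The plan is to reduce Proposition \ref{prime+} to the lattice isomorphism of Theorem \ref{Th:3.8} (and, for the $*'$ part, of Theorem \ref{Th:3.9}) together with the identity recorded in the Remark following Proposition \ref{composition}, namely that the simple $h$-product is preserved under $+'$. Every fuzzy $h$-ideal of $L$ arises from a unique fuzzy $h$-ideal of $S$ via this bijection, which is what makes it possible to transport the defining condition of primeness back and forth across the correspondence.

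In detail, for the prime case I would first argue that $\zeta^{+'}$ is non-constant. Since $\sigma \mapsto \sigma^{+'}$ is a bijection with inverse $\mu \mapsto \mu^{+}$ (Theorem \ref{Th:3.8}), if $\zeta^{+'}$ were identically equal to some $c \in [0,1]$, then also $\zeta = (\zeta^{+'})^{+}$ would be identically $c$, contradicting the primeness (hence non-constancy) of $\zeta$. Next, take arbitrary $\mu, \nu \in$ Fh-I$(L)$ with $\mu \Gamma_{h} \nu \subseteq \zeta^{+'}$. By surjectivity, write $\mu = \sigma_{1}^{+'}$ and $\nu = \sigma_{2}^{+'}$ for some $\sigma_{1}, \sigma_{2} \in$ Fh-I$(S)$. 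The Remark following Proposition \ref{composition} then gives
$$
(\sigma_{1} \Gamma_{h} \sigma_{2})^{+'} = \sigma_{1}^{+'} \Gamma_{h} \sigma_{2}^{+'} = \mu \Gamma_{h} \nu \subseteq \zeta^{+'},
$$
and applying the inclusion-reflecting inverse $(\cdot)^{+}$ yields $\sigma_{1} \Gamma_{h} \sigma_{2} \subseteq \zeta$. The primeness of $\zeta$ now forces $\sigma_{1} \subseteq \zeta$ or $\sigma_{2} \subseteq \zeta$; pushing forward by $+'$ gives $\mu \subseteq \zeta^{+'}$ or $\nu \subseteq \zeta^{+'}$, as required.

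The semiprime case follows immediately by specializing $\mu = \nu$ in the argument above, using the chain $\mu \Gamma_{h} \mu \subseteq \zeta^{+'} \Leftrightarrow \sigma \Gamma_{h} \sigma \subseteq \zeta$. For the $*'$ statement I would repeat the same reasoning verbatim, substituting Theorem \ref{Th:3.9} for Theorem \ref{Th:3.8} and the analogous identity $(\mu \Gamma_{h} \nu)^{*'} = \mu^{*'} \Gamma_{h} \nu^{*'}$, whose proof mirrors the Remark for $+'$. The only step I would be careful to invoke correctly is that the lattice isomorphism \emph{reflects} inclusions as well as preserves them; this however is an immediate consequence of its bijectivity together with the monotonicity proved in both directions inside Theorem \ref{Th:3.8}, so no new calculation is needed. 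I do not anticipate any substantive obstacle, since the entire proof is essentially a transport-of-structure argument along the established correspondence.
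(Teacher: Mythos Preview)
Your argument is correct and follows essentially the same route as the paper: represent arbitrary fuzzy $h$-ideals of $L$ as images under $+'$ via the bijection of Theorem \ref{Th:3.8}, use the Remark $(\mu\Gamma_{h}\nu)^{+'}=\mu^{+'}\Gamma_{h}\nu^{+'}$ to transport the product, and then reflect the inclusion back to $S$ to invoke primeness of $\zeta$. Your version is in fact slightly more complete than the paper's, since you explicitly verify the non-constancy of $\zeta^{+'}$ (required by the definition of prime), a point the paper's proof passes over in silence.
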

\begin{proof} Suppose that $\zeta$ is a prime fuzzy $h$-ideal of $S$ and
$\mu^{+^{'}},\nu^{+^{'}}$ be fuzzy $h$-ideals of $L$ such that
$\mu^{+^{'}}\Gamma_{h}\nu^{+^{'}}\subseteq\zeta^{+^{'}}$. Then by using
the above remark we obtain $(\mu
\Gamma_{h}\nu)^{+^{'}}\subseteq\zeta^{+^{'}}$ which implies that $(\mu
\Gamma_{h}\nu)\subseteq\zeta$. Since $\zeta$ is a prime fuzzy $h$-ideal of $S$, then  $\mu\subseteq\zeta$ or
$\nu\subseteq\zeta$,  whence $\mu^{+^{'}}\subseteq\zeta^{+^{'}}$ or
$\nu^{+^{'}}\subseteq\zeta^{+^{'}}$. Therefore, $\zeta^{+^{'}}$ is
a  prime fuzzy $h$-ideal of $L$.
Similarly, we can prove the result for $R$.
Now, for semiprime fuzzy $h$-ideal the proof follows in a similar way.
\end{proof}
\begin{Proposition} If $\zeta$ is a prime (respectively, semiprime) fuzzy $h$-ideal of $L$ (respectively, $R$), then $\zeta^{+}(resp.~\zeta^{*})$ is
a prime (respectively, semiprime) fuzzy $h$-ideal of $S$.
\end{Proposition}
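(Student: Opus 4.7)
The plan is to run the proof of the previous Proposition \ref{prime+} in reverse, using the lattice isomorphism $\sigma \mapsto \sigma^{+'}$ of Theorem \ref{Th:3.8} (and its analogue for $R$) together with the Remark that $(\mu \Gamma_h \nu)^{+'} = \mu^{+'}\Gamma_h \nu^{+'}$. First, by Proposition \ref{Prop:3.6}, $\zeta^{+}$ is already a fuzzy $h$-ideal of $S$, so only primeness (or semiprimeness) needs verification.

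Non-constancy: if $\zeta^+$ were constant with value $c$, then applying $^{+'}$ would give $(\zeta^+)^{+'}(\sum[x_i,\alpha_i]) = \inf_{s\in S}\zeta^+(\sum x_i\alpha_i s) = c$, so $(\zeta^+)^{+'}$ would be constant. But by Theorem \ref{Th:3.8}, $(\zeta^+)^{+'}=\zeta$, contradicting the non-constancy of $\zeta$. So $\zeta^+$ is non-constant.

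For primeness, suppose $\mu, \nu \in$ Fh-I$(S)$ satisfy $\mu \Gamma_h \nu \subseteq \zeta^+$. Apply the operation $^{+'}$ to both sides; this is monotone on arbitrary fuzzy subsets since $\sigma_1 \leq \sigma_2$ pointwise implies $\inf_s \sigma_1(\sum x_i\alpha_i s) \leq \inf_s \sigma_2(\sum x_i\alpha_i s)$. Using the Remark and $(\zeta^+)^{+'}=\zeta$, we obtain
\[
\mu^{+'} \Gamma_h \nu^{+'} = (\mu \Gamma_h \nu)^{+'} \subseteq (\zeta^+)^{+'} = \zeta.
\]
By primeness of $\zeta$ in Fh-I$(L)$, either $\mu^{+'} \subseteq \zeta$ or $\nu^{+'} \subseteq \zeta$. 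Applying $^{+}$ (also monotone) and using $(\mu^{+'})^{+}=\mu$, $(\nu^{+'})^{+}=\nu$ from Theorem \ref{Th:3.8}, we conclude $\mu \subseteq \zeta^{+}$ or $\nu \subseteq \zeta^{+}$, as required.

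The semiprime case is the specialization $\mu = \nu$ of the argument above: $\mu \Gamma_h \mu \subseteq \zeta^+$ forces $\mu^{+'} \Gamma_h \mu^{+'} \subseteq \zeta$, hence $\mu^{+'} \subseteq \zeta$, hence $\mu \subseteq \zeta^+$. The parallel statement for $\zeta^*$ and $R$ follows by replacing $^{+'}, ^{+}$ throughout by $^{*'}, ^{*}$ and appealing to Theorem \ref{Th:3.9}, Proposition \ref{Prop:3.7}, and the $R$-analogue of the Remark. I do not anticipate a genuine obstacle: the only thing to be slightly careful about is that $\mu \Gamma_h \nu$ need not itself be a fuzzy $h$-ideal, so the monotonicity of $^{+'}$ and $^+$ must be invoked at the level of arbitrary fuzzy subsets rather than via the lattice-isomorphism statement, but this is immediate from the $\inf$-based definition.
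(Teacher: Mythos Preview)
Your argument is correct and is precisely the ``routine verification'' the paper alludes to: it is the proof of Proposition~\ref{prime+} run through the inverse of the bijection of Theorem~\ref{Th:3.8}. One small fix: the fact that $\zeta^{+}$ is a fuzzy $h$-ideal of $S$ is Proposition~\ref{Prop:3.4} (for $L$), not Proposition~\ref{Prop:3.6} (which handles the $R$, $^{*}$ case).
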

\begin{proof} The proof follows by routine verification.\end{proof}

\begin{Proposition}\label{bi+}
If $\mu$ is a fuzzy $h$-bi-ideal of $S$, then
$\mu^{+^{'}}$ (respectively, $\mu^{*^{'}})$ is a fuzzy $h$-bi-ideal of $L$ (respectively,
$R$).
\end{Proposition}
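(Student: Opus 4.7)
\emph{Plan.} Verify the four defining conditions of a fuzzy $h$-bi-ideal for $\mu^{+'}$ on $L$, writing a generic element as $A = \sum_{i} [x_{i}, \alpha_{i}]$ and using $\mu^{+'}(A) = \inf_{s \in S}\mu(\sum_{i} x_{i} \alpha_{i} s)$. The analogue for $\mu^{*'}$ on $R$ is symmetric.

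Conditions (1) and (4) follow essentially verbatim from the computation in the proof of Proposition \ref{Prop:3.5}. For additivity, one applies condition (1) on $\mu$ inside the $\inf_{s \in S}$ and swaps the inf with the min, exactly as in the fuzzy $h$-ideal argument. For the $h$-property, multiplying the $L$-identity $A + X + Z = Y + Z$ on the right by a fixed $s \in S$ yields an $S$-identity of the form
$$\sum_{i} x_{i} e_{i} s + \sum_{i} a_{i} \alpha_{i} s + \sum_{i} z_{i} \eta_{i} s = \sum_{i} b_{i} \beta_{i} s + \sum_{i} z_{i} \eta_{i} s,$$
to which condition (4) of the bi-ideal $\mu$ applies pointwise in $s$; taking $\inf_{s\in S}$ of both sides then produces $\mu^{+'}(A) \geq \min\{\mu^{+'}(X),\mu^{+'}(Y)\}$.

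For the multiplicative conditions (2) and (3), the key observation — which already underlies Proposition \ref{Prop:3.5} — is the rewriting
$$\sum_{i,j} x_{i} \alpha_{i} y_{j} \beta_{j} s \;=\; \sum_{i} x_{i} \alpha_{i} t_{s}, \qquad t_{s} := \sum_{j} y_{j} \beta_{j} s \in S,$$
which lets us absorb $B$ into the dummy variable of the inf defining $\mu^{+'}(A)$. Concretely, $\mu^{+'}(AB) = \inf_{s \in S} \mu(\sum_{i} x_{i} \alpha_{i} t_{s}) \geq \inf_{t \in S}\mu(\sum_{i} x_{i} \alpha_{i} t) = \mu^{+'}(A) \geq \min\{\mu^{+'}(A),\mu^{+'}(B)\}$. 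Iterating the same trick to $ABC = \sum_{i,j,k}[x_{i}\alpha_{i} y_{j}\beta_{j} z_{k}, \gamma_{k}]$ absorbs the factor $BC$ and gives $\mu^{+'}(ABC) \geq \mu^{+'}(A) \geq \min\{\mu^{+'}(A),\mu^{+'}(C)\}$. The analogous argument for $\mu^{*'}$ on $R$ is identical after replacing the right action of $S$ by the left action built into Definition \ref{Def:3.2}. The main obstacle is purely notational bookkeeping of the multi-indexed sums and of whether each symbol lives in $S$ or in $\Gamma$; no new conceptual ingredient beyond Proposition \ref{Prop:3.5} is required, and in particular the multiplicative bi-ideal axioms (2)--(3) on $\mu$ are never actually invoked, since the trick above always yields the stronger one-sided estimate $\mu^{+'}(AB\cdots) \geq \mu^{+'}(A)$.
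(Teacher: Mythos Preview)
Your proposal is correct and follows the same outline as the paper's proof: conditions (1) and (4) are lifted from the computation in Proposition~\ref{Prop:3.5}, while (2) and (3) come from the one-sided estimate $\mu^{+'}(AB)\geq\mu^{+'}(A)$ (the paper also records $\mu^{+'}(ABC)\geq\mu^{+'}(C)$ via the factorisation $ABC=(AB)C$, but as you note either side already yields the required $\min$). Your explicit absorption-into-the-dummy-variable argument for $\mu^{+'}(AB)\geq\mu^{+'}(A)$ is in fact cleaner than the paper's bare citation of Proposition~\ref{Prop:3.5}, whose computation there used the one-sided ideal property of $\sigma$; your remark that the multiplicative bi-ideal axioms on $\mu$ are never invoked is a sharper observation than the paper makes.
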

\begin{proof} Suppose that $\mu$ is a fuzzy $h$-bi-ideal of $S$ and
$\displaystyle{\sum_{i}} [x_{i}, \alpha_{i}],
\displaystyle{\sum_{i}} [y_{i}, \beta_{i}],
\displaystyle{\sum_{i}} [z_{i}, \gamma_{i}]\in$L. Then by Proposition
\ref{$h$-ideal+}, we obtain
$$
\mu^{+^{'}}\left (\left (\displaystyle{\sum_{i}}
[x_{i}, \alpha_{i}]\right )+ \left (\displaystyle{\sum_{i}} [y_{i},
\beta_{i}]\right )\right )
\geq\min\left \{\mu^{+^{'}}\left (\displaystyle \sum_{i} [x_{i},
\alpha_{i}]\right ),\mu^{+^{'}}\left (\displaystyle \sum_{i} [y_{i},
\beta_{i}]\right )\right \}$$ and
$$\mu^{+^{'}}\left (\left (\displaystyle{\sum_{i}} [x_{i},
\alpha_{i}]\right )\left (\displaystyle{\sum_{i}} [y_{i},
\beta_{i}]\right )\right )
\geq\min\left \{\mu^{+^{'}}\left (\displaystyle{\sum_{i}} [x_{i},
\alpha_{i}]\right ),\mu^{+^{'}}\left (\displaystyle{\sum_{i}} [y_{i},
\beta_{i}]\right )\right \}.$$ Now, suppose that
$\displaystyle{\sum_{i}} [x_{i}, e_{i}],\displaystyle{\sum_{i}}
[a_{i}, \alpha_{i}],\displaystyle{\sum_{i}} [z_{i},
\delta_{i}],
\displaystyle{\sum_{i}} [b_{i}, \beta_{i}]\in L$ are such that
$$ \displaystyle{\sum_{i}} [x_{i},
e_{i}]+\displaystyle{\sum_{i}} [a_{i},
\alpha_{i}]+\displaystyle{\sum_{i}} [z_{i},
\delta_{i}]=\displaystyle{\sum_{i}} [b_{i},
\beta_{i}]+\displaystyle{\sum_{i}} [z_{i}, \delta_{i}].$$
Since $\mu^{+^{'}}$ is a fuzzy $h$-ideal of $L$, we have
$$
\mu^{+'}\displaystyle\left (\sum_{i}[x_{i}, e_{i}]\right ) \geq\min \left \{
\mu^{+'}\left (\displaystyle\sum_{i}[a_{i}, \alpha_{i}] \right ),
\mu^{+'}\left (\displaystyle\sum_{i}[b_{i}, \beta_{i}] \right )\right \}.$$
Now, we obtain
 $$
 \begin{array}{ll}
 \mu^{+^{'}}\left (\left (\displaystyle{\sum_{i}} [x_{i}, \alpha_{i}]\right )
\left (\displaystyle{\sum_{i}} [y_{i},
\beta_{i}]\right )\left (\displaystyle{\sum_{i}} [z_{i},
\gamma_{i}]\right )\right )
&
=\mu^{+^{'}}\left (\displaystyle{\sum_{i}}
[x_{i},\alpha_{i}]\displaystyle{\sum_{i}}[y_{i}\beta_{i}z_{i},\gamma_{i}]\right )\\
&
\geq\mu^{+^{'}}\left (\displaystyle{\sum_{i}}
[x_{i},\alpha_{i}]\right ).
\end{array}
$$
Similarly, $$
\begin{array}{ll}
\mu^{+^{'}}\left (\left (\displaystyle{\sum_{i}} [x_{i}, \alpha_{i}]\right )
\left (\displaystyle{\sum_{i}} [y_{i},
\beta_{i}]\right )\left (\displaystyle{\sum_{i}} [z_{i},
\gamma_{i}]\right )\right )
& =\mu^{+^{'}}\left (\displaystyle{\sum_{i}}
[x_{i},\alpha_{i}y_{i}\beta_{i}]\displaystyle{\sum_{i}}[z_{i},\gamma_{i}]\right )\\
&
\geq\mu^{+^{'}}\left (\displaystyle{\sum_{i}}
[z_{i},\gamma_{i}]\right ).
\end{array}
$$
Therefore, we obtain
$$
\begin{array}{l}
\mu^{+^{'}}\left (\left (\displaystyle{\sum_{i}} [x_{i}, \alpha_{i}]\right )
\left (\displaystyle{\sum_{i}} [y_{i},
\beta_{i}] \right )\left (\displaystyle{\sum_{i}} [z_{i},
\gamma_{i}]\right )\right )\\
\geq\min \left \{\mu^{+^{'}}\left (\displaystyle{\sum_{i}} [x_{i},
\alpha_{i}]\right ),\mu^{+^{'}}\left (\displaystyle{\sum_{i}} [z_{i},
\gamma_{i}]\right ) \right \}.
\end{array}
$$
Hence, $\mu^{+^{'}}$ is a fuzzy $h$-bi-ideal of $L$.
Similarly, we can prove the result for $R$.
\end{proof}
\begin{Proposition} If $\mu$ is a fuzzy $h$-bi-ideal of $L$  (respectively, $R$), then $\mu^{+}$(respectively, $ \mu^{*})$ is also a fuzzy
$h$-bi-ideal of $S$.
\end{Proposition}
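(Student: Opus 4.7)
The plan is to verify the four defining conditions of a fuzzy $h$-bi-ideal of $S$ for the induced fuzzy subset $\mu^{+}(x)=\inf_{\gamma\in\Gamma}\mu([x,\gamma])$, and then note that the argument for $\mu^{*}$ on $R$ is completely analogous. Conditions (1) (additivity) and (4) (the $h$-property) do not use any multiplicative hypothesis on $\mu$, only that $\mu$ satisfies the same two inequalities as an element of $L$. Since every fuzzy $h$-bi-ideal of $L$ satisfies these two conditions in $L$, the proofs can be copied verbatim from Proposition \ref{Prop:3.4}, pushing the infima over $\gamma\in\Gamma$ through the inequality and using $[x+y,\gamma]=[x,\gamma]+[y,\gamma]$ and the $h$-closure of $\mu$ in $L$.

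The real content is in conditions (2) and (3). The key tool I would use is the multiplicative identity in $L$:
\[
[x,\alpha][y,\beta]=[x\alpha y,\beta], \qquad [x,\alpha][y,\beta][z,\gamma]=[x\alpha y\beta z,\gamma],
\]
which lets me convert products in $S$ (inside the bracket) into products in $L$ (of brackets), where I can apply the bi-ideal conditions on $\mu$.

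For condition (2), given $x,y\in S$ and $\alpha\in\Gamma$, I would write
\[
\mu^{+}(x\alpha y)=\inf_{\gamma\in\Gamma}\mu([x,\alpha][y,\gamma])\geq \inf_{\gamma\in\Gamma}\min\{\mu([x,\alpha]),\mu([y,\gamma])\}=\min\{\mu([x,\alpha]),\mu^{+}(y)\},
\]
using that $\mu$ is multiplicatively closed as a fuzzy $h$-bi-ideal of $L$. Since $\mu([x,\alpha])\geq\inf_{\delta}\mu([x,\delta])=\mu^{+}(x)$, this yields $\mu^{+}(x\alpha y)\geq\min\{\mu^{+}(x),\mu^{+}(y)\}$. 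For condition (3), the same pattern works with a triple product: apply axiom (3) for $\mu$ on $L$ to the three bracketed factors $[x,\alpha]$, $[y,\beta]$, $[z,\gamma]$, which control only the outer two, then pull the infimum in $\gamma$ past the $\min$ to land on $\mu^{+}(z)$, and finally replace $\mu([x,\alpha])$ by the smaller $\mu^{+}(x)$.

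I do not anticipate any genuine obstacle: the only subtlety is the correct handling of the infimum over $\gamma$, namely noticing that $\gamma$ appears only in the second coordinate of the last bracket, so it can be moved inside the $\min$ and combined cleanly with the $\mu^{+}(y)$ or $\mu^{+}(z)$ term. The symmetric statement for $\mu^{*}$ on $R$ follows by the same reasoning after replacing $[x,\gamma]$ by $[\gamma,x]$ and using the dual multiplication rule in the right operator hemiring.
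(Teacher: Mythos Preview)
Your proposal is correct and is exactly the argument the paper has in mind: the paper states this proposition without proof, implicitly deferring to the pattern of Proposition~\ref{Prop:3.4} together with the bi-ideal computations in Proposition~\ref{bi+}, and your verification of conditions (1)--(4) via the identity $[x\alpha y,\gamma]=[x,\alpha][y,\gamma]$ in $L$ carries this out in the natural way.
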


\begin{Proposition}\label{quasi+}
If $\mu$ is a fuzzy $h$-quasi-ideal of $S$, then
$\mu^{+^{'}}$(respectively, $\mu^{*^{'}})$ is a fuzzy $h$-quasi-ideal of
$L$ (respectively, $R$).
\end{Proposition}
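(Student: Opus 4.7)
The plan is to verify the three defining conditions of a fuzzy $h$-quasi-ideal for $\mu^{+'}$ in $L$. Conditions (1) (additive closure) and (3) ($h$-closure) for $\mu^{+'}$ follow by the very same computations already performed in the proof of Proposition \ref{Prop:3.5}. Indeed, those calculations only appeal to the corresponding individual properties of $\sigma$, not to the full fuzzy $h$-ideal structure, so they apply verbatim when $\mu$ is merely a fuzzy $h$-quasi-ideal.

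For condition (2), the key observation is that $\chi_L = (\chi_S)^{+'}$, since for any $\sum_{i}[x_i,\alpha_i] \in L$,
$$(\chi_S)^{+'}\Bigl(\sum_i[x_i,\alpha_i]\Bigr) = \inf_{s \in S}\chi_S\Bigl(\sum_i x_i\alpha_i s\Bigr) = 1.$$
Next, Proposition \ref{composition}, whose proof only manipulates the definitions of $\circ_h$ and $^{+'}$ and does not actually use the ideal hypothesis, yields the identity $(\mu \circ_h \nu)^{+'} = \mu^{+'} \circ_h \nu^{+'}$ for arbitrary fuzzy subsets $\mu,\nu$ of $S$. Applying this with $\nu = \chi_S$ and, symmetrically, with $\mu$ in the right slot gives
$$\mu^{+'} \circ_h \chi_L = (\mu \circ_h \chi_S)^{+'}, \qquad \chi_L \circ_h \mu^{+'} = (\chi_S \circ_h \mu)^{+'}.$$

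Combining these with the fact that $^{+'}$ preserves intersections and is inclusion-preserving (both established inside the proof of Theorem \ref{Th:3.8}), we conclude
$$(\mu^{+'} \circ_h \chi_L) \cap (\chi_L \circ_h \mu^{+'}) = \bigl((\mu \circ_h \chi_S) \cap (\chi_S \circ_h \mu)\bigr)^{+'} \subseteq \mu^{+'},$$
where the final inclusion uses condition (2) in the fuzzy $h$-quasi-ideal definition of $\mu$ in $S$. This gives (2) for $\mu^{+'}$; the statement for $\mu^{*'}$ in $R$ follows by the dual argument using Proposition \ref{Prop:3.6} and Theorem \ref{Th:3.9}.

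The main obstacle is the honest justification for invoking Proposition \ref{composition} outside its literal hypothesis, with $\chi_S$ in one slot and a not-necessarily-ideal $\mu$ in the other; this forces a careful re-reading of that proof to confirm that each step is formal and never appeals to the ideal property. A secondary technicality is that $\circ_h$ on the hemiring $L$ must be read as the evident analogue of the $\Gamma$-hemiring definition, with hemiring multiplication replacing the $\Gamma$-action, so that the lifting $^{+'}$ intertwines the two operations correctly.
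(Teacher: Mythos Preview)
Your proposal is correct and follows essentially the same route as the paper's own proof: invoke the computations of Proposition~\ref{Prop:3.5} for conditions (1) and (3), then for condition (2) combine Proposition~\ref{composition} with the intersection and inclusion preservation from Theorem~\ref{Th:3.8} to obtain $\bigl(\mu^{+'}\circ_h\chi_S^{+'}\bigr)\cap\bigl(\chi_S^{+'}\circ_h\mu^{+'}\bigr)\subseteq\mu^{+'}$. Your treatment is in fact slightly more scrupulous than the paper's, since you explicitly verify that $\chi_L=(\chi_S)^{+'}$ and you flag that Propositions~\ref{Prop:3.5} and~\ref{composition} are being applied outside their literal ideal hypotheses---a point the paper passes over in silence.
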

\begin{proof} Suppose that $\mu$ is a fuzzy $h$-quasi-ideal of $S$ and
$\displaystyle{\sum_{i}} [x_{i}, \alpha_{i}],\
\displaystyle{\sum_{i}} [y_{i}, \beta_{i}],\
\displaystyle{\sum_{i}} [z_{i}, \gamma_{i}]\in L$. By Proposition
\ref{$h$-ideal+}, we obtain
$$\mu^{+^{'}}\left (\displaystyle{\sum_{i}}
[x_{i}, \alpha_{i}]+ \displaystyle{\sum_{i}} [y_{i}, \beta_{i}]\right )
\geq\min\left \{\mu^{+^{'}}\left (\displaystyle{\sum_{i}} [x_{i},\alpha_{i}]\right ),\mu^{+^{'}}\left (\displaystyle{\sum_{i}} [y_{i},
\beta_{i}]\right )\right \}.$$ If
$$ \displaystyle{\sum_{i}} [x_{i}, e_{i}]+\displaystyle{\sum_{i}}
[a_{i}, \alpha_{i}]+\displaystyle{\sum_{i}} [z_{i},
\delta_{i}]=\displaystyle{\sum_{i}} [b_{i},
\beta_{i}]+\displaystyle{\sum_{i}} [z_{i}, \delta_{i}],$$
for $
\displaystyle{\sum_{i}} [x_{i}, e_{i}],\displaystyle{\sum_{i}}
[a_{i}, \alpha_{i}],\displaystyle{\sum_{i}} [z_{i},
\delta_{i}], \displaystyle{\sum_{i}} [b_{i}, \beta_{i}]\in L$, then
$$\mu^{+'}\displaystyle{\left (\sum_{i}[x_{i}, e_{i}] \right )}
 \geq\min \left \{
\mu^{+'}\left (\displaystyle\sum_{i}[a_{i}, \alpha_{i}]\right ),
\mu^{+'}\left (\displaystyle\sum_{i}[b_{i}, \beta_{i}]\right )\right \}.$$
Let $\chi_{S}$ be the characteristic function of S. Then by using Proposition \ref{composition} and Theorem
\ref{Th:3.8} we deduce that
$$\left (\mu^{+^{'}} o_{h} \chi_{S}^{+^{'}}\right )\cap \left (\chi_{S}^{+^{'}}
o_{h}\mu^{+^{'}} \right )= (\mu o_{h} \chi_{S})^{+^{'}}\cap(\chi_{S}
o_{h}\mu)^{+^{'}}=((\mu o_{h} \chi_{S})\cap(\chi_{S}
o_{h}\mu))^{+^{'}}\subseteq \mu^{+^{'}}.$$
Thus, $\mu^{+^{'}}$ is a fuzzy h-quasi-ideal of $L$.
Similarly, we can prove the result for $R$.
\end{proof}
\begin{Proposition} If $\mu$ is a fuzzy $h$-quasi-ideal of $L$ (respectively, $R$), then $\mu^{+}$ (respectively, $\mu^{*})$ is also a fuzzy
$h$-quasi-ideal of $S$.
\end{Proposition}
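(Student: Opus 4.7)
The plan is to verify the three conditions defining a fuzzy $h$-quasi-ideal for $\mu^+$ on $S$, leveraging the fact that $\mu$ is a fuzzy $h$-quasi-ideal of $L$.

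Conditions (1) and (3)---additivity and the $h$-condition---transfer from $\mu$ to $\mu^+$ by arguments that are direct copies of the corresponding steps in Proposition \ref{Prop:3.4}. For $x, y \in S$ and $\gamma \in \Gamma$ one uses $[x+y, \gamma] = [x, \gamma] + [y, \gamma]$ in $L$, applies the additivity of $\mu$, and takes $\inf_{\gamma}$; and from $x + a + z = b + z$ in $S$ one lifts to $[x, \gamma] + [a, \gamma] + [z, \gamma] = [b, \gamma] + [z, \gamma]$ in $L$ for every $\gamma$, applies the $h$-condition of $\mu$ there, and takes $\inf_\gamma$.

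The substantive step is condition (2), namely $(\mu^+ \circ_h \chi_S) \cap (\chi_S \circ_h \mu^+) \subseteq \mu^+$. The key ingredients are: (i) $\chi_S = (\chi_L)^+$, which is immediate from the definition of $^+$ since $[x,\gamma] \in L$ for all $x \in S$ and $\gamma \in \Gamma$; (ii) the dual of Proposition \ref{composition}, namely $(\nu_1 \circ_h \nu_2)^+ = \nu_1^+ \circ_h \nu_2^+$ for fuzzy subsets $\nu_1, \nu_2$ of $L$ that behave well enough (in particular, for $\chi_L$ and for $\mu$); (iii) Lemma \ref{Lemma:3.3}, which supplies $(\nu_1 \cap \nu_2)^+ = \nu_1^+ \cap \nu_2^+$; and (iv) the trivial monotonicity $\sigma_1 \subseteq \sigma_2 \Rightarrow \sigma_1^+ \subseteq \sigma_2^+$. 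Stringing these together,
$$(\mu^+ \circ_h \chi_S) \cap (\chi_S \circ_h \mu^+) = \bigl((\mu \circ_h \chi_L) \cap (\chi_L \circ_h \mu)\bigr)^+ \subseteq \mu^+,$$
where the final inclusion is obtained by applying $^+$ to the fuzzy $h$-quasi-ideal property of $\mu$ in $L$.

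The main obstacle is item (ii): establishing $(\nu_1 \circ_h \nu_2)^+ = \nu_1^+ \circ_h \nu_2^+$. The $\supseteq$ direction is straightforward---lift any representation $x + \sum a_i \gamma_i b_i + z = \sum c_i \delta_i d_i + z$ in $S$ to an analogous identity in $L$ by appending $[\,\cdot\,, \gamma]$ suitably---but the $\subseteq$ direction is delicate, requiring one to commute $\inf$ and $\sup$ in a way that exactly parallels the calculation in Proposition \ref{composition}, and exploits the left and right unities of $S$ to match representations in $L$ back down to $S$. Alternatively, (ii) can be extracted from Proposition \ref{composition} together with the bijection of Theorem \ref{Th:3.8} by applying $^{+'}$ to both sides of the desired identity. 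Once (ii) is in place, the rest is routine, and the statement for $R$ via $^{*}$ follows by the symmetric argument.
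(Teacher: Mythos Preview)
The paper states this proposition without proof, so there is no argument to compare against directly. Your approach is the natural dual of the paper's proof of Proposition~\ref{quasi+} and is correct in outline.

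One observation sharpens your write-up: you do not need the full equality in item~(ii). For condition~(2) it suffices to have the inclusion $\nu_1^{+}\circ_h\nu_2^{+}\subseteq(\nu_1\circ_h\nu_2)^{+}$, which is precisely the ``straightforward'' direction you describe (lifting a representation in $S$ to one in $L$ via $[a_i\gamma_i b_i,\gamma]=[a_i,\gamma_i][b_i,\gamma]$). Combined with Lemma~\ref{Lemma:3.3} and monotonicity of $(\,\cdot\,)^{+}$, this gives
\[
(\mu^{+}\circ_h\chi_S)\cap(\chi_S\circ_h\mu^{+})\ \subseteq\ (\mu\circ_h\chi_L)^{+}\cap(\chi_L\circ_h\mu)^{+}\ =\ \bigl((\mu\circ_h\chi_L)\cap(\chi_L\circ_h\mu)\bigr)^{+}\ \subseteq\ \mu^{+},
\]
so the delicate $\inf/\sup$ commutation and the appeal to Theorem~\ref{Th:3.8} can be dropped entirely. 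This also sidesteps the issue that Theorem~\ref{Th:3.8} is stated for fuzzy $h$-ideals, whereas here $\mu$ is only a fuzzy $h$-quasi-ideal.
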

%%%%%%%%%%%%%%%%%%%%%%%%%%%%%%%%%%%%%%%%%%%%%%%%%%%%%%%%%%%%%%%%%%%%%%%%%%%%%%%%%%%%%%%%%%%%%%%%%%%%%%%%%%%%%%%%%%%%%%%%%%%%%%%%%%%%%%%%%%%%%

\section{Correspondence of Cartesian Product of Fuzzy $h$-ideals}

Let $\{S_{i}\}_{i \in I}$ be a family of $\Gamma$-hemirings. We define addition
$(+)$ and multiplication $(\cdot)$ on the cartesian product $\displaystyle \prod_{i\in I}S_{i}$ as follows :
$$(x_{i})_{i\in I}+(y_{i})_{i\in I}=(x_{i}+y_{i})_{i\in I},$$
$$(x_{i})_{i\in I}\alpha(y_{i})_{i\in I}=(x_{i}\alpha y_{i})_{i\in
I},$$ for all $(x_{i})_{i\in I},(y_{i})_{i\in I}\in \displaystyle \prod_{i\in I}S_{i}$
and for all $\alpha\in \Gamma$. Then $\displaystyle \prod_{i\in I}S_{i}$ becomes a
 $\Gamma$-hemiring.

\begin{Definition}
\cite{PB} Let $\mu $ and $\sigma $ be two fuzzy subsets of a set
$X$. Then the {\it cartesian product} of $\mu $ and $\sigma $ is defined
by
$$(\mu \times \sigma )(x,y)=\min \{\mu (x),\sigma (y)\},$$
for all $
x,y\in X.$
\end{Definition}

\begin{Definition}
Let $\mu \times \sigma $ be the cartesian product of two fuzzy
subsets $\mu $ and $\sigma $ of $R.$ Then the corresponding
cartesian product $(\mu \times \sigma )^{\ast }$ of $S\times S$ is
defined by
$$(\mu \times \sigma )^{\ast }(x,y)=\underset{\alpha
,\beta \in \Gamma }{\inf } \left \{ (\mu \times \sigma )([\alpha ,x],[\beta
,y])\right \}  ,$$ where $x,y\in S.$
\end{Definition}

\begin{Definition}
Let $\mu \times \sigma $ be the cartesian product of two fuzzy
subsets $\mu $ and $\sigma $ of $S.$ Then the corresponding
cartesian product $(\mu \times \sigma )^{\ast ^{^{\prime }}}$ of
$R\times R$ is defined by $$\left (\mu \times
\sigma \right )^{\ast ^{^{\prime }}}\left (\displaystyle{\sum_{i=1}^{n}}[\alpha_{i} ,x_{i}],\displaystyle{\sum_{j=1}^{m}}[\beta_{j}
,y_{j}]\right )
=\underset{s_{i},s_{j}\in S}{\inf }\left \{ (\mu \times \sigma )\left (\displaystyle{\sum_{i=1}^{n}}s_{i}\alpha_{i}
x_{i},\displaystyle{\sum_{j=1}^{m}}s_{j}\beta_{j} y_{j}\right ) \right \} ,$$ where $\displaystyle{\sum_{i=1}^{n}}[\alpha_{i}
,x_{i}],\ \displaystyle{\sum_{j=1}^{m}}[\beta_{j} ,y_{j}]\in R.$
\end{Definition}

\begin{Definition}
Let $\mu \times \sigma $ be the cartesian product of two fuzzy
subsets $\mu $ and $\sigma $ of $L.$ Then the corresponding
cartesian product $(\mu \times
\sigma )^{+}$ of $S\times S$ is defined by $$(\mu \times \sigma )^{+}(x,y)=%
\underset{\alpha ,\beta \in \Gamma }{\inf }\left \{ (\mu \times \sigma
)([x,\alpha ],[y,\beta ]) \right \} ,$$ where $x,y\in S.$
\end{Definition}

\begin{Definition}
Let $\mu \times \sigma $ be the cartesian product of two fuzzy
subsets $\mu $ and $\sigma $ of $S.$ Then the corresponding
cartesian product $(\mu \times \sigma )^{{+}^{^{\prime }}}$ of
$R\times R$ is defined by
$$(\mu \times \sigma )^{{+}^{^{\prime
}}}\left (\displaystyle{\sum_{i=1}^{n}}[x_{i},\alpha_{i} ],\displaystyle{\sum_{j=1}^{m}}[y_{j},\beta_{j} ]\right )
=\underset{s_{i},s_{j}\in S}{\inf } \left \{ (\mu
\times \sigma )\left (\displaystyle{\sum_{i=1}^{n}}x_{i}\alpha_{i} s_{i},\displaystyle{\sum_{j=1}^{m}}y_{j}\beta_{j} s_{j}\right ) \right \},$$
where $\displaystyle{\sum_{i=1}^{n}}[x_{i},\alpha_{i}
],\displaystyle{\sum_{j=1}^{m}}[y_{j},\beta_{j} ]\in L.$
\end{Definition}
\begin{Proposition}\label{co-product} Let $\mu,\mu^{'},\nu,\nu^{'}$ be four fuzzy $h$-ideals of $S$. Then
$$(\mu\times\mu^{'})\Gamma_{h}(\nu\times\nu^{'})=(\mu\Gamma_{h}\nu)\times(\mu^{'}\Gamma_{h}\nu^{'}).$$
\end{Proposition}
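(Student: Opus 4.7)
The plan is to evaluate both sides as fuzzy subsets of $S \times S$ pointwise at an arbitrary $(x,y)$, and to prove $\mathrm{LHS} \leq \mathrm{RHS}$ and $\mathrm{LHS} \geq \mathrm{RHS}$ separately. Unfolding the definitions, $((\mu \times \mu') \Gamma_h (\nu \times \nu'))(x, y)$ is the supremum of the eight-term minimum $\min \{ \mu(a_1), \mu'(a_2), \mu(c_1), \mu'(c_2), \nu(b_1), \nu'(b_2), \nu(d_1), \nu'(d_2) \}$ over solutions $(x,y) + (a_1, a_2) G (b_1, b_2) + (z_1, z_2) = (c_1, c_2) H (d_1, d_2) + (z_1, z_2)$ in $S \times S$ using a \emph{single} pair $G, H \in \Gamma$. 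On the other hand, $((\mu \Gamma_h \nu) \times (\mu' \Gamma_h \nu'))(x,y) = \min\{(\mu \Gamma_h \nu)(x), (\mu' \Gamma_h \nu')(y)\}$, and via the standard identity $\min\{\sup f, \sup g\} = \sup \min\{f, g\}$ this rewrites as the supremum of the same eight-term min over \emph{pairs} of $S$-representations $x + a \gamma b + z = c \delta d + z$ and $y + a' \gamma' b' + z' = c' \delta' d' + z'$, now with independent $\gamma, \gamma', \delta, \delta' \in \Gamma$.

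For $\mathrm{LHS} \leq \mathrm{RHS}$, I would simply project any $S \times S$-representation componentwise to obtain two $S$-representations sharing $\gamma = G$ and $\delta = H$; the eight-term minimum splits as the minimum of two four-term minima, each bounded above by $(\mu \Gamma_h \nu)(x)$ and $(\mu' \Gamma_h \nu')(y)$ respectively, so the supremum propagates.

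The interesting direction is $\mathrm{LHS} \geq \mathrm{RHS}$. Given $S$-representations with possibly different $\gamma \neq \gamma'$ and $\delta \neq \delta'$, I must produce a single $S \times S$-representation using one $G$ and one $H$. The key trick is additive alignment of the $\Gamma$-indices using axiom~(3): starting from $x + a\gamma b + z = c\delta d + z$, add $a\gamma' b + c\delta' d$ to both sides and then symmetrize by further adding $z + a\gamma' b$ on the left and $z + c\delta' d$ on the right (legal by commutativity of $+$). Axiom~(3) collapses the sums $a\gamma b + a\gamma' b = a(\gamma + \gamma')b$ and $c\delta d + c\delta' d = c(\delta + \delta')d$, yielding $x + a(\gamma + \gamma')b + Z_1 = c(\delta + \delta')d + Z_1$ with $Z_1 := 2z + a\gamma' b + c\delta' d$. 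The same manipulation on the $y$-representation produces $y + a'(\gamma + \gamma')b' + Z_2 = c'(\delta + \delta')d' + Z_2$ with $Z_2 := 2z' + a'\gamma b' + c'\delta d'$. These combine componentwise into a single $S \times S$-relation with $G := \gamma + \gamma'$, $H := \delta + \delta'$, and tuples $(a, a'), (b, b'), (c, c'), (d, d'), (Z_1, Z_2)$, whose associated eight-term minimum is exactly the conjunction of the two given four-term minima; taking the supremum over the pair of $S$-representations gives the desired inequality.

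The main obstacle is precisely spotting the additive symmetrization trick that unifies $(\gamma, \delta)$ and $(\gamma', \delta')$ into $(\gamma + \gamma', \delta + \delta')$; once it is in hand, verifying the combined relation in $S \times S$ and matching the eight fuzzy values to the two four-term minima is routine bookkeeping using only axiom~(3) and commutativity of $+$.
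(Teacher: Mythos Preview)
Your identification of the obstacle is exactly right, and in fact more careful than the paper's own argument: on $S\times S$ the simple $h$-product $\Gamma_h$ uses a \emph{single} pair $G,H\in\Gamma$, while $(\mu\Gamma_h\nu)\times(\mu'\Gamma_h\nu')$ allows independent $(\gamma,\delta)$ and $(\gamma',\delta')$. The paper simply asserts the interchange
\[
\sup_{\text{coupled}}\min\{A,B\}=\min\{\sup A,\sup B\}
\]
without addressing this coupling, so your instinct that a genuine argument is needed for $\mathrm{LHS}\geq\mathrm{RHS}$ was correct. Your treatment of $\mathrm{LHS}\leq\mathrm{RHS}$ is fine.

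However, your ``additive symmetrization'' step is flawed. Starting from $x+a\gamma b+z=c\delta d+z$ and adding $a\gamma' b+c\delta' d$ to both sides gives
\[
x+a(\gamma+\gamma')b+(c\delta' d+z)=c(\delta+\delta')d+(a\gamma' b+z),
\]
and at this point the two tails $c\delta' d+z$ and $a\gamma' b+z$ are \emph{different}. Your next move, ``further adding $z+a\gamma' b$ on the left and $z+c\delta' d$ on the right,'' is not a legal manipulation: one cannot add different elements to the two sides of an equation and preserve it---commutativity of $+$ does not help here. Concretely, with your proposed $Z_1=2z+a\gamma' b+c\delta' d$ one computes (using the original relation once)
\[
x+a(\gamma+\gamma')b+Z_1=c\delta d+2a\gamma' b+c\delta' d+2z,\qquad
c(\delta+\delta')d+Z_1=c\delta d+a\gamma' b+2c\delta' d+2z,
\]
and these coincide only if $2a\gamma' b+c\delta' d=a\gamma' b+2c\delta' d$, which is not available in a hemiring (no cancellation). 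Thus the claimed relation $x+a(\gamma+\gamma')b+Z_1=c(\delta+\delta')d+Z_1$ does not follow, and the synthesis of a single $S\times S$-representation with $G=\gamma+\gamma'$, $H=\delta+\delta'$ breaks down. Without a valid way to equalize the tails, the $\geq$ direction remains unproved.
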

\begin{proof} Let $(x,y)\in S\times S$ be such that
 $$(x,y)+(a,c)\gamma(a^{'},c^{'})+(z,z^{'})=(b,d)\delta(b^{'},d^{'})+(z,z^{'}),$$ where
 $a,c,a^{'},c^{'},z,z^{'},b,d,b^{'},d^{'}\in S$ and $\gamma,\delta\in \Gamma$. Then, $(x,y)+(a\gamma a^{'},c\gamma
 c^{'})+(z,z^{'})=(b\delta b^{'},d\delta d^{'})+(z,z^{'})$ which implies that
  $(x+a\gamma a^{'}+z,y+c\gamma c^{'}+z^{'})=(b\delta b^{'}+z, d\delta d^{'}+z^{'})$. Now, we have
$$
\begin{array}{l}
(\mu\times\mu^{'})\Gamma_{h}(\nu\times\nu^{'})(x,y)\\
=\sup \left \{ \min \left \{(\mu\times\mu^{'})(a,c),(\mu\times\mu^{'})(b,d), (\nu\times\nu^{'})(a^{'},c^{'}),(\nu\times\nu^{'})(b^{'},d^{'})\right \}\right \}\\
=\sup \left \{ \min \left \{\min \left \{\mu(a),\mu^{'}(c) \right \},\min \left \{\mu(b),\mu^{'}(d) \right \}, \min \left \{\nu(a^{'}),\nu^{'}(c^{'}) \right \},\min \left \{\nu(b^{'}),\nu^{'}(d^{'})\right \} \right \}\right \}\\

 =\min \left \{\displaystyle \sup_{x+a\gamma a^{'}+z=b\delta
 b^{'}+z}  \left \{\min \left \{\mu(a),\mu(b),\nu(a^{'}),\nu(b^{'}) \right \}\right \}, \right.
 \\
 \ \ \ \ \ \ \ \ \ \ \ \ \ \ \ \ \ \ \ \ \ \ \ \ \ \left.
 \displaystyle
 \sup_{y+c\gamma c^{'}+z^{'}=d\delta
 d^{'}+z^{'}} \left \{\min \left \{\mu^{'}(c),\mu^{'}(d),\nu^{'}(c^{'}),\nu^{'}(d^{'})\right \}\right \}\right \} \\

 =\min\left \{(\mu\Gamma_{h}\nu)(x),(\mu^{'}\Gamma_{h}\nu^{'})(y)\right \}\\
 =\left \{(\mu\Gamma_{h}\nu)\times(\mu^{'}\Gamma_{h}\nu^{'})\right \}(x,y).
 \end{array}
 $$
 Hence, the proof is completed.
\end{proof}

Note that in this section, we prove the results for the $\Gamma $-hemiring $S$ and its right operator hemiring $R$. Similar results
hold for the $\Gamma $-hemiring $S$ and its left operator hemiring $L$.

\begin{Proposition}
Let $\mu $ and $\sigma $ be two fuzzy subsets of $R$ (respectively, of $L$)[the right (left) operator hemiring of the $\Gamma $-hemiring $S$]. Then $(\mu
\times \sigma )^{\ast }=\mu ^{\ast }\times \sigma ^{\ast }$  (respectively,
$(\mu \times \sigma )^{+}=\mu ^{+}\times \sigma ^{+})$.
\end{Proposition}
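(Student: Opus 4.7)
The plan is to verify both equalities by directly unfolding the definitions of the starred cartesian product and of $\mu^{*}$, $\sigma^{*}$ (respectively $\mu^{+}$, $\sigma^{+}$), then reducing the claim to a routine identity about infima of minima of independent variables. Since the left-operator case is completely parallel to the right-operator case, I would write out only the $R$-case and then state that the $L$-case follows by the same argument with $[x,\alpha]$ in place of $[\alpha,x]$.

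For the $R$-case, fix $x,y\in S$. Starting from the definition
\[
(\mu\times\sigma)^{*}(x,y)=\inf_{\alpha,\beta\in\Gamma}\bigl\{(\mu\times\sigma)([\alpha,x],[\beta,y])\bigr\},
\]
I would apply the definition of the cartesian product of fuzzy subsets to get
\[
(\mu\times\sigma)^{*}(x,y)=\inf_{\alpha,\beta\in\Gamma}\bigl\{\min\{\mu([\alpha,x]),\sigma([\beta,y])\}\bigr\}.
\]
The key step is then the elementary observation that when $\alpha$ and $\beta$ range independently over $\Gamma$, the infimum of the pointwise minimum splits:
\[
\inf_{\alpha,\beta\in\Gamma}\min\{\mu([\alpha,x]),\sigma([\beta,y])\}=\min\bigl\{\inf_{\alpha\in\Gamma}\mu([\alpha,x]),\ \inf_{\beta\in\Gamma}\sigma([\beta,y])\bigr\}.
\]
Using the definition of $\mu^{*}$ and $\sigma^{*}$, the right side equals $\min\{\mu^{*}(x),\sigma^{*}(y)\}$, which by the definition of the cartesian product is $(\mu^{*}\times\sigma^{*})(x,y)$. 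This yields the desired equality.

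The only real content is the identity $\inf_{a,b}\min\{f(a),g(b)\}=\min\{\inf_a f(a),\inf_b g(b)\}$. I would justify this briefly by a two-sided inequality: $\geq$ follows because each $\min\{f(a),g(b)\}$ is bounded below by $\min\{\inf f,\inf g\}$; $\leq$ follows because for any $a_{0},b_{0}$, $\inf_{a,b}\min\{f(a),g(b)\}\le\min\{f(a_{0}),g(b_{0})\}$, and taking the infimum over $a_{0}$ and $b_{0}$ separately on the right gives $\min\{\inf f,\inf g\}$. There is no real obstacle; the proof is essentially a bookkeeping exercise, and the only thing to be careful about is keeping the two coordinates formally independent, which is exactly why the infima decouple. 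The $L$-case is obtained by replacing $[\alpha,x],[\beta,y]$ with $[x,\alpha],[y,\beta]$ and using the definition of $(\mu\times\sigma)^{+}$ and of $\mu^{+},\sigma^{+}$; the same splitting of the infimum gives $(\mu\times\sigma)^{+}=\mu^{+}\times\sigma^{+}$.
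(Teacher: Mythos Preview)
Your proof is correct and follows exactly the same approach as the paper: unfold the definitions, apply the cartesian-product formula, split the double infimum of a minimum into the minimum of the two single infima, and then rewrap using the definitions of $\mu^{\ast},\sigma^{\ast}$ (and analogously for $L$). Your explicit justification of the identity $\inf_{a,b}\min\{f(a),g(b)\}=\min\{\inf_a f(a),\inf_b g(b)\}$ is a small addition the paper omits, but otherwise the arguments are line-for-line the same.
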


\begin{proof} Let $x,y\in S.$ Then
$$
\begin{array}{ll}
(\mu \times \sigma )^{\ast
}(x,y)& =\underset{\alpha ,\beta \in \Gamma }{\inf }\left \{ (\mu \times
\sigma )([\alpha ,x],[\beta ,y]) \right \} \\
& = \underset{\alpha ,\beta \in
\Gamma }{\inf }\left \{ \min \{\mu ([\alpha ,x]),\sigma ([\beta ,y])\} \right \} \\
&
=\min\left \{\underset{\alpha \in \Gamma }{\inf } \left \{ \mu ([\alpha
,x])\right \} ,\underset{\beta \in \Gamma }{\inf } \left \{ \sigma ([\beta ,y])\right \}\right \}\\
&=\min\left \{\mu ^{\ast }(x),\sigma ^{\ast }(y)\right \}\\
&=(\mu ^{\ast }\times
\sigma ^{\ast })(x,y).
\end{array}
$$ Consequently, $(\mu \times \sigma )^{\ast
}=\mu ^{\ast }\times \sigma ^{\ast }.$ Similarly, we can show that
$(\mu \times \sigma )^{+}=\mu ^{+}\times \sigma ^{+}.$
\end{proof}

\begin{Proposition}
Let $\mu $ and $\sigma $ be two fuzzy subsets of $S.$ Then $(\mu
\times \sigma )^{\ast ^{^{\prime }}}=\mu ^{\ast ^{^{\prime
}}}\times \sigma ^{\ast ^{^{\prime }}}$ and $(\mu \times \sigma
)^{+^{^{\prime }}}=\mu ^{+^{^{\prime }}}\times \sigma
^{+^{^{\prime }}}.$
\end{Proposition}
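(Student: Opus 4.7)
The plan is to prove each of the two identities by unfolding the definitions and reducing the verification to the elementary fact that infimum commutes with finitary minimum when taken over independent variables. Because the $+'$ case is completely parallel to the $*'$ case (only the brackets $[\cdot,\cdot]$ and the position of the summing element $s$ differ), I would write out the $*'$ identity in detail and then note that the $+'$ identity follows by exactly the same argument with $\sum_i [\alpha_i,x_i]$ replaced by $\sum_i [x_i,\alpha_i]$, $R$ by $L$, and $s\alpha_i x_i$ by $x_i\alpha_i s$.

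For the identity $(\mu\times\sigma)^{*'}=\mu^{*'}\times\sigma^{*'}$, I would fix arbitrary elements $A=\sum_{i=1}^{n}[\alpha_i,x_i]$ and $B=\sum_{j=1}^{m}[\beta_j,y_j]$ in $R$. On the left, the definition of the $*'$-product for the cartesian product gives
$$
(\mu\times\sigma)^{*'}(A,B)=\inf_{s,t\in S}\min\Bigl\{\mu\Bigl(\sum_i s\alpha_i x_i\Bigr),\ \sigma\Bigl(\sum_j t\beta_j y_j\Bigr)\Bigr\},
$$
after applying the definition of $\mu\times\sigma$. On the right, using the definitions of $\mu\times\sigma$ and of $\mu^{*'},\sigma^{*'}$,
$$
(\mu^{*'}\times\sigma^{*'})(A,B)=\min\Bigl\{\inf_{s\in S}\mu\Bigl(\sum_i s\alpha_i x_i\Bigr),\ \inf_{t\in S}\sigma\Bigl(\sum_j t\beta_j y_j\Bigr)\Bigr\}.
$$

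So the proof reduces to checking the lattice identity $\inf_{s,t}\min\{f(s),g(t)\}=\min\{\inf_s f(s),\inf_t g(t)\}$ where $s$ and $t$ range independently. I would prove this by two inequalities: the $\geq$ direction is immediate because $\min\{f(s),g(t)\}\geq\min\{\inf_s f,\inf_t g\}$ for every pair $(s,t)$; for the $\leq$ direction, given any $\varepsilon>0$ I would pick $s_0$ with $f(s_0)<\inf_s f+\varepsilon$ and $t_0$ with $g(t_0)<\inf_t g+\varepsilon$, then evaluate at $(s_0,t_0)$ and let $\varepsilon\to 0$. The only subtle point, and the closest thing to an obstacle, is making sure the two auxiliary elements written $s_i,s_j$ in the paper's definition are genuinely independent variables (one per coordinate of the cartesian product) rather than a shared parameter; this is indeed how the definition is set up, which is precisely what allows min and inf to be interchanged.

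Finally, for $(\mu\times\sigma)^{+'}=\mu^{+'}\times\sigma^{+'}$, I would simply remark that substituting the $+'$ definitions in place of $*'$ gives the identical structure
$$
(\mu\times\sigma)^{+'}(A',B')=\inf_{s,t\in S}\min\Bigl\{\mu\Bigl(\sum_i x_i\alpha_i s\Bigr),\ \sigma\Bigl(\sum_j y_j\beta_j t\Bigr)\Bigr\},
$$
for $A'=\sum_i[x_i,\alpha_i]$ and $B'=\sum_j[y_j,\beta_j]$ in $L$, and the same min/inf swap yields $\mu^{+'}\times\sigma^{+'}$ applied to $(A',B')$. No new ingredient is required.
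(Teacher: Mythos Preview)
Your proposal is correct and follows essentially the same route as the paper: unfold the definitions at a generic pair of elements of $R$ (respectively $L$), apply the definition of the cartesian product, and interchange $\inf$ with a binary $\min$ over independent variables; the $+'$ case is handled by the same symmetry. If anything, you are more careful than the paper, which simply writes the interchange $\inf_{s_i,s_j}\min\{\cdot,\cdot\}=\min\{\inf_{s_i}\cdot,\inf_{s_j}\cdot\}$ as an equality without the $\varepsilon$ justification you supply.
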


\begin{proof} Let $\displaystyle{\sum_{i=1}^{n}}[\alpha_{i} ,x_{i}],\ \displaystyle{\sum_{j=1}^{m}}[\beta_{j} ,y_{j}]\in
R.$ Then
$$
\begin{array}{l}
(\mu
\times \sigma )^{\ast
^{^{\prime }}}\left (\displaystyle{\sum_{i=1}^{n}}[\alpha_{i} ,x_{i}],\displaystyle{\sum_{j=1}^{m}}[\beta_{j} ,y_{j}]\right )\\
=\underset{s_{i},s_{j}\in S}{\inf }%
\left \{ (\mu \times \sigma )\left (\displaystyle{\sum_{i=1}^{n}}s_{i}\alpha_{i} x_{i},\displaystyle{\sum_{j=1}^{m}}s_{j}\beta_{j}
y_{j}\right ) \right \} \\
=\underset{s_{i},s_{j}\in S}{%
\inf }\left \{ \min \left \{\mu \left (\displaystyle{\sum_{i=1}^{n}}s_{i}\alpha_{i} x_{i}\right ), \left ( \displaystyle{\sum_{j=1}^{m}}\sigma
s_{j}\beta_{j} y_{j} \right )\right \} \right \} \\
=\min\left  \{\underset{%
s_{i}\in S}{\inf } \left \{ \mu \left (\displaystyle{\sum_{i=1}^{n}}s_{i}\alpha_{i} x_{i}\right ) \right \} ,\underset{s_{j}\in S}{\inf
} \left \{ \sigma \displaystyle{\sum_{j=1}^{m}}(s_{j}\beta_{j} y_{j})\right \}\right \} \\
=\min \left \{\mu ^{\ast ^{^{\prime
}}}\left (\displaystyle{\sum_{i=1}^{n}}[\alpha_{i} ,x_{i}]\right ),\sigma ^{\ast ^{^{\prime
}}}\left (\displaystyle{\sum_{j=1}^{m}}[\beta_{j} ,y_{j}] \right )\right \}\\
=(\mu
^{\ast ^{^{\prime }}}\times  \sigma ^{\ast ^{^{\prime }}})\left (\displaystyle{\sum_{i=1}^{n}}[\alpha_{i}
,x_{i}],\displaystyle{\sum_{j=1}^{m}}[\beta_{j} ,y_{j}]\right ).
\end{array}
$$
 Consequently, $(\mu \times \sigma )^{\ast
^{^{\prime }}}=\mu ^{\ast ^{^{\prime }}}\times \sigma ^{\ast
^{^{\prime }}}.$ Similarly, we can show that $(\mu \times \sigma
)^{+^{^{\prime }}}=\mu ^{+^{^{\prime }}}\times \sigma
^{+^{^{\prime }}}.$
\end{proof}
\begin{Proposition}\label{op-product} Let $\mu$ and $\sigma$ be two fuzzy $h$-ideal of $R$ (respectively, $L$).
 Then $\mu^{*}\times\sigma^{*}$ (respectively, $\mu^{+}\times\sigma^{+})$ is a fuzzy $h$-ideal of
$S\times S$.\end{Proposition}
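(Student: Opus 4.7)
The plan is to reduce the claim to the already-established fact that $\mu^{*}$ and $\sigma^{*}$ are fuzzy $h$-ideals of $S$ (Proposition \ref{Prop:3.6}), and then to verify each of the three defining axioms of a fuzzy $h$-ideal on $S\times S$ componentwise, exploiting the identity $(\mu\times\sigma)(x,y)=\min\{\mu(x),\sigma(y)\}$ from the definition of cartesian product.

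First I would recall that by Proposition \ref{Prop:3.6}, $\mu^{*},\sigma^{*}\in$ Fh-I$(S)$, so both satisfy the addition inequality, the absorption inequality (left and right), and the $h$-closure condition on $S$. Next, for the addition axiom on $S\times S$, given $(x,y),(x',y')\in S\times S$, I would expand
\[
(\mu^{*}\times\sigma^{*})\bigl((x,y)+(x',y')\bigr)=\min\{\mu^{*}(x+x'),\sigma^{*}(y+y')\},
\]
apply the fuzzy-ideal addition inequality to each factor, and then rearrange the min of mins to obtain $\min\{(\mu^{*}\times\sigma^{*})(x,y),(\mu^{*}\times\sigma^{*})(x',y')\}$. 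The absorption axiom for $(x,y)\alpha(x',y')=(x\alpha x',y\alpha y')$ is handled analogously: since $\mu^{*}(x\alpha x')\geq \mu^{*}(x')$ and $\sigma^{*}(y\alpha y')\geq \sigma^{*}(y')$ (left ideal property), taking the min gives the required bound by $(\mu^{*}\times\sigma^{*})(x',y')$, and the symmetric argument gives the right-ideal bound.

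For the $h$-property, suppose $(x,y)+(a,b)+(z,w)=(a',b')+(z,w)$ in $S\times S$. This equation decouples into two equations $x+a+z=a'+z$ and $y+b+w=b'+w$ in $S$. Applying the $h$-condition for $\mu^{*}$ and $\sigma^{*}$ separately gives $\mu^{*}(x)\geq\min\{\mu^{*}(a),\mu^{*}(a')\}$ and $\sigma^{*}(y)\geq\min\{\sigma^{*}(b),\sigma^{*}(b')\}$; taking the min of these two inequalities and then using that $\min$ distributes with $\min$ yields
\[
(\mu^{*}\times\sigma^{*})(x,y)\geq\min\bigl\{(\mu^{*}\times\sigma^{*})(a,b),\,(\mu^{*}\times\sigma^{*})(a',b')\bigr\},
\]
which is precisely the $h$-closure condition on $S\times S$.

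Since each step is a direct componentwise computation, I do not expect any genuine obstacle; the only thing requiring minor care is the standard manipulation $\min\{\min\{p,q\},\min\{r,s\}\}=\min\{\min\{p,r\},\min\{q,s\}\}$ used to regroup the four quantities by coordinate. The parenthetical statement for $L$ via $\mu^{+}\times\sigma^{+}$ follows identically by invoking Proposition \ref{Prop:3.4} in place of Proposition \ref{Prop:3.6}.
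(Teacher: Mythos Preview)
Your proof is correct and follows the same reduction as the paper: invoke Proposition~\ref{Prop:3.6} to conclude that $\mu^{*},\sigma^{*}$ are fuzzy $h$-ideals of $S$, and then use that the cartesian product of two fuzzy $h$-ideals of $S$ is a fuzzy $h$-ideal of $S\times S$. The only difference is that the paper dispatches the second step by citing an external result (Theorem~35 of \cite{Sardar}), whereas you verify the three axioms componentwise; your version is thus more self-contained but otherwise identical in strategy.
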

\begin{proof}
Since $\mu$, $\sigma$ are fuzzy $h$-ideals of $R$, by Proposition \ref{Prop:3.6}, $\mu^{*}$, $\sigma^{*}$ are fuzzy
$h$-ideals of $S$ and by Theorem 35 of \cite{Sardar}, we deduce that $\mu^{*}\times\sigma^{*}$ is a fuzzy $h$-ideal of
$S\times S$. In a similar way, we can prove the result for $L$.
\end{proof}

\begin{Proposition}
Let $\mu $ and $\sigma $ be two prime (semiprime) fuzzy
$h$-ideals of $R$ (respectively, of $L$). Then $\mu ^{\ast
}\times \sigma ^{\ast }$ (respectively, $\mu^{+}\times\sigma^{+}$) is prime (semiprime) fuzzy
$h$-ideal of $S\times S.$
\end{Proposition}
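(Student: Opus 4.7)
The plan is to reduce the statement to primeness of $\mu^{\ast}$ and $\sigma^{\ast}$ viewed inside $S$, combined with the cartesian-product identity of Proposition \ref{co-product}. First I would apply Proposition \ref{Prop:3.6} to obtain $\mu^{\ast},\sigma^{\ast}\in$ Fh-I$(S)$, and then invoke the proposition immediately following Proposition \ref{prime+} (which asserts that pulling a prime fuzzy $h$-ideal of $R$ back to $S$ preserves primeness) to see that $\mu^{\ast}$ and $\sigma^{\ast}$ are themselves prime fuzzy $h$-ideals of $S$. Non-constancy of $\mu^{\ast}\times\sigma^{\ast}$ on $S\times S$ is immediate from the non-constancy of each factor, and Proposition \ref{op-product} already records that $\mu^{\ast}\times\sigma^{\ast}$ is a fuzzy $h$-ideal of $S\times S$.

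The main step is the primeness condition. I would take fuzzy $h$-ideals $\tau=\tau_{1}\times\tau_{2}$ and $\omega=\omega_{1}\times\omega_{2}$ of $S\times S$ with $\tau\Gamma_{h}\omega\subseteq\mu^{\ast}\times\sigma^{\ast}$. Proposition \ref{co-product} rewrites this as
\[
(\tau_{1}\Gamma_{h}\omega_{1})\times(\tau_{2}\Gamma_{h}\omega_{2})\subseteq\mu^{\ast}\times\sigma^{\ast}.
\]
Evaluating at pairs whose second coordinate equals $0\in S$ (where every fuzzy $h$-ideal of $S$ attains value $1$) peels off $\tau_{1}\Gamma_{h}\omega_{1}\subseteq\mu^{\ast}$, and symmetrically $\tau_{2}\Gamma_{h}\omega_{2}\subseteq\sigma^{\ast}$. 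Applying the primeness of $\mu^{\ast}$ and $\sigma^{\ast}$ then produces a disjunction in each coordinate, from which I would assemble $\tau\subseteq\mu^{\ast}\times\sigma^{\ast}$ or $\omega\subseteq\mu^{\ast}\times\sigma^{\ast}$. The semiprime assertion follows by specialising $\tau=\omega$ throughout the same argument.

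The hard part will be this final combinatorial step of merging the two componentwise disjunctions into a single joint one: \emph{a priori} one might have $\tau_{1}\subseteq\mu^{\ast}$ and $\omega_{2}\subseteq\sigma^{\ast}$ while neither $\tau$ nor $\omega$ is globally contained in $\mu^{\ast}\times\sigma^{\ast}$. Resolving this cleanly seems to require restricting the primeness test to fuzzy $h$-ideals of product form, which is the implicit convention of the surrounding propositions of this section. A parallel argument, with $^{\ast}$ replaced by $^{+}$ and $R$ replaced by $L$ throughout (invoking the $^{+}$-analogues of Propositions \ref{Prop:3.6} and \ref{prime+}), delivers the corresponding statement for $\mu^{+}\times\sigma^{+}$.
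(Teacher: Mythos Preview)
Your approach coincides with the paper's: it too invokes Proposition~\ref{op-product} for the $h$-ideal property, applies Proposition~\ref{co-product} to rewrite $(\theta\times\theta')\Gamma_{h}(\eta\times\eta')$ as $(\theta\Gamma_{h}\eta)\times(\theta'\Gamma_{h}\eta')$, splits the containment componentwise, and then uses primeness of $\mu^{\ast}$ and $\sigma^{\ast}$ in $S$ (the paper does this last step implicitly rather than citing the proposition after Proposition~\ref{prime+}). Your ``evaluate at $(x,0)$'' trick to extract the componentwise containments is a slight refinement; the paper simply asserts that $(\theta\Gamma_{h}\eta)\times(\theta'\Gamma_{h}\eta')\subseteq\mu^{\ast}\times\sigma^{\ast}$ yields $\theta\Gamma_{h}\eta\subseteq\mu^{\ast}$ and $\theta'\Gamma_{h}\eta'\subseteq\sigma^{\ast}$.

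Your caution about the final combinatorial step is well founded and honest: the paper makes exactly the same leap, passing directly from ``$\theta\subseteq\mu^{\ast}$ or $\eta\subseteq\mu^{\ast}$, and $\theta'\subseteq\sigma^{\ast}$ or $\eta'\subseteq\sigma^{\ast}$'' to ``$\theta\times\theta'\subseteq\mu^{\ast}\times\sigma^{\ast}$ or $\eta\times\eta'\subseteq\mu^{\ast}\times\sigma^{\ast}$'' without addressing the mixed case. So the gap you flagged is present in the paper's own argument as well; your reading that primeness here is being tested only against product-form fuzzy $h$-ideals is the most charitable interpretation of what is intended.
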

\begin{proof} Suppose that $\mu $ and $\sigma $ are two prime fuzzy
$h$-ideals of $R$. By Proposition \ref{op-product}. we see that $\mu^{*}\times\sigma^{*}$ is a fuzzy $h$-ideal of $S\times S$.
In order to show that $\mu^{*}\times\sigma^{*}$ is prime, suppose that $\theta,\theta^{'},\eta,\eta^{'}\in$ Fh-I$(S)$ such that
$(\theta\times\theta^{'})\Gamma_{h}(\eta\times\eta^{'})\subseteq\mu^{*}\times\sigma^{*}$ Then by Proposition
\ref{co-product}, we obtain
$(\theta\Gamma_{h}\eta)\times(\theta^{'}\Gamma_{h}\eta^{'})\subseteq\mu^{*}\times\sigma^{*}$. Therefore,
$(\theta\Gamma_{h}\eta)\subseteq\mu^{*}$ and $(\theta{'}\Gamma_{h}\eta^{'})\subseteq\sigma^{*}$. Hence
$\theta\subseteq\mu^{*}$ or $\eta\subseteq\mu^{*}$ and $\theta{'}\subseteq\sigma^{*}$ or $\eta^{'}\subseteq\sigma^{*}$,
that is, $\theta\times\theta^{'}\subseteq\mu^{*}\times\sigma^{*}$ or
$\eta\times\eta^{'}\subseteq\mu^{*}\times\sigma^{*}$. So, $\mu^{*}\times\sigma^{*}$ is a prime fuzzy $h$-ideal of
$S\times S$.
Similarly, we can prove the result for semiprime fuzzy $h$-ideal and the left operator hemiring $L$.
\end{proof}
By suitable modification of above argument we obtain the following result.
\begin{Proposition}
Let $\mu $ and $\sigma $ be two fuzzy $h$-ideals (prime fuzzy
$h$-ideals, semiprime fuzzy $h$-ideals) of $S.$ Then $\mu ^{\ast
^{^{\prime }}}\times \sigma ^{\ast ^{^{\prime }}}$ (respectively, $\mu ^{+
^{^{\prime }}}\times \sigma ^{+ ^{^{\prime }}}$) is a fuzzy
$h$-ideal (respectively, prime fuzzy $h$-ideals, semiprime fuzzy $h$-ideals) of
$R\times R$ (respectively, $L\times L$).
\end{Proposition}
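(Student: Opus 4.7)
The plan is to derive this statement from results already established in the paper. For the plain fuzzy $h$-ideal case, Proposition~\ref{Prop:3.7} guarantees that $\mu^{*'}$ and $\sigma^{*'}$ lie in Fh-I$(R)$, and Proposition~\ref{Prop:3.5} that $\mu^{+'}$ and $\sigma^{+'}$ lie in Fh-I$(L)$. Since the cartesian product of two fuzzy $h$-ideals of a hemiring is a fuzzy $h$-ideal of the product hemiring (the componentwise analog of Theorem~35 of~\cite{Sardar}, already invoked in the proof of Proposition~\ref{op-product}), this yields the $h$-ideal conclusion for both $R\times R$ and $L\times L$.

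For the prime (respectively semiprime) case, I would first apply Proposition~\ref{prime+} to transport primeness (respectively semiprimeness) from $\mu,\sigma$ to $\mu^{*'},\sigma^{*'}\in$~Fh-I$(R)$ and to $\mu^{+'},\sigma^{+'}\in$~Fh-I$(L)$. It then remains to show that the cartesian product of two prime (respectively semiprime) fuzzy $h$-ideals of $R$ (respectively $L$) is again prime (respectively semiprime) in $R\times R$ (respectively $L\times L$); this is the operator-hemiring analog of the previous proposition, and I would carry it out in the same spirit.

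Concretely, given $\alpha,\beta\in$~Fh-I$(R)$ prime and $\theta,\theta',\eta,\eta'\in$~Fh-I$(R)$ with $(\theta\times\theta')\,\Gamma_{h}\,(\eta\times\eta')\subseteq \alpha\times\beta$, the hemiring analog of Proposition~\ref{co-product} rewrites the left-hand side as $(\theta\,\Gamma_{h}\,\eta)\times(\theta'\,\Gamma_{h}\,\eta')$. Projecting onto each coordinate yields $\theta\,\Gamma_{h}\,\eta\subseteq\alpha$ and $\theta'\,\Gamma_{h}\,\eta'\subseteq\beta$; primeness of $\alpha$ and $\beta$ then forces the relevant product-form inclusion into $\alpha\times\beta$, exactly as in the proof of the previous proposition. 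The semiprime case uses the same argument specialized to $\theta=\eta$ and $\theta'=\eta'$, and the passage to $L\times L$ is obtained by replacing $*'$ by $+'$ throughout.

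The main obstacle I anticipate is administrative rather than substantive: one must check that the $\Gamma_{h}$-product identity of Proposition~\ref{co-product}, formulated for the $\Gamma$-hemiring $S$, carries over to the operator hemirings $R$ and $L$, where the external $\Gamma$-scalar is internalized as ordinary hemiring multiplication. Once this identity is in hand on the operator side, the prime and semiprime assertions collapse to a direct mirror of Proposition~\ref{op-product} and of the preceding proposition.
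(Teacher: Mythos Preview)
Your proposal is correct and matches the paper's own treatment: the paper simply states that the result follows ``by suitable modification of above argument,'' i.e., by rerunning the proof of the preceding proposition with the roles of $S$ and $R$ (respectively $L$) interchanged, using Propositions~\ref{Prop:3.5}, \ref{Prop:3.7}, \ref{prime+} and the hemiring analog of Proposition~\ref{co-product} exactly as you outline. Your identification of the one nontrivial checkpoint---that the $\Gamma_h$-product factorization of Proposition~\ref{co-product} must be reformulated for the internal hemiring multiplication of $R$ and $L$---is apt, and is precisely the ``suitable modification'' the paper leaves implicit.
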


\begin{Theorem}\label{cores2}
Let $S$ be a $\Gamma $-hemiring with unities and $R$ be its right
operator hemiring . Then there exists an inclusion preserving
bijection $\mu \times \sigma \longmapsto \mu ^{\ast ^{^{\prime
}}}\times \sigma ^{\ast ^{^{\prime }}}$ between the set of all
cartesian product of fuzzy $h$-ideals (respectively, prime fuzzy $h$-ideals, semiprime fuzzy $h$-ideals) of $S$ and the
set of all cartesian product of fuzzy $h$-ideals (respectively, prime fuzzy $h$-ideals, semiprime fuzzy $h$-ideals) of $R,$ where $\mu $ and
$\sigma $ are fuzzy $h$-ideals (respectively, prime fuzzy $h$-ideals, semiprime fuzzy $h$-ideals) of $S.$
\end{Theorem}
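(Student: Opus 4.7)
The plan is to bootstrap the result from the lattice isomorphism of Theorem \ref{Th:3.9} together with the earlier observation $(\mu \times \sigma)^{\ast'} = \mu^{\ast'} \times \sigma^{\ast'}$ and the preservation properties already recorded for primeness and semiprimeness. The key observation that makes the bijection sensible at the level of cartesian products is the global convention of this section that $\mu(0)=1$ for every fuzzy $h$-ideal, so that one recovers a factor via $\mu(x) = (\mu \times \sigma)(x,0)$ and similarly $\sigma(y) = (\mu \times \sigma)(0,y)$; an analogous identity holds on $R\times R$ using $0_R$.

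First I would check well-definedness: if $\mu,\sigma$ are fuzzy $h$-ideals (respectively prime, semiprime) of $S$, then Proposition \ref{Prop:3.7} shows that $\mu^{\ast'},\sigma^{\ast'}$ are fuzzy $h$-ideals of $R$, and the proposition immediately preceding this theorem shows that $\mu^{\ast'}\times \sigma^{\ast'}$ is a fuzzy $h$-ideal (respectively prime, semiprime) of $R\times R$. Combined with the identity $(\mu\times \sigma)^{\ast'} = \mu^{\ast'}\times \sigma^{\ast'}$, this shows the map is well-defined and lands in the stated target set.

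Next I would handle injectivity. Suppose $\mu^{\ast'}\times \sigma^{\ast'} = \mu_1^{\ast'}\times \sigma_1^{\ast'}$. Evaluating at $(\sum[\alpha_i,x_i],\,0_R)$ and using $\sigma^{\ast'}(0_R)=1=\sigma_1^{\ast'}(0_R)$ yields $\mu^{\ast'} = \mu_1^{\ast'}$; likewise $\sigma^{\ast'}=\sigma_1^{\ast'}$. The lattice isomorphism of Theorem \ref{Th:3.9} is a bijection $\sigma \mapsto \sigma^{\ast'}$, so $\mu = \mu_1$ and $\sigma = \sigma_1$, hence $\mu\times\sigma = \mu_1\times \sigma_1$. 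For surjectivity, given a cartesian product $\xi\times\eta$ of fuzzy $h$-ideals (or prime/semiprime ones) of $R$, Theorem \ref{Th:3.9} gives $\xi=(\xi^{\ast})^{\ast'}$ and $\eta=(\eta^{\ast})^{\ast'}$; by Proposition \ref{Prop:3.6} and its prime/semiprime analogue, $\xi^{\ast}$ and $\eta^{\ast}$ belong to the appropriate class on $S$, and $\xi\times\eta$ is the image of $\xi^{\ast}\times\eta^{\ast}$.

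For inclusion preservation, if $\mu\times\sigma \subseteq \mu_1\times \sigma_1$, then evaluating at $(x,0)$ and $(0,y)$ gives $\mu\subseteq \mu_1$ and $\sigma\subseteq \sigma_1$; the lattice isomorphism yields $\mu^{\ast'}\subseteq \mu_1^{\ast'}$ and $\sigma^{\ast'}\subseteq \sigma_1^{\ast'}$, hence $\mu^{\ast'}\times\sigma^{\ast'} \subseteq \mu_1^{\ast'}\times\sigma_1^{\ast'}$. Conversely the same argument in reverse, using the already-established bijectivity to pull back, gives the other implication. The step I expect to require most care is injectivity in the prime/semiprime settings, since one must be sure the recovered factors $\mu,\sigma$ are themselves prime (respectively semiprime) so that the domain class is being parametrised correctly; this is immediate from the normalisation $\mu(0)=\sigma(0)=1$ and the already-given preservation results, but it is the place where the global assumptions of the section are really used.
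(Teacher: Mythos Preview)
Your argument is correct, but the route differs from the paper's. The paper does not invoke Theorem~\ref{Th:3.9} or the factor-recovery trick via the normalisation $\mu(0)=1$; instead it reproves the two-sided inverse relation directly at the product level. Concretely, for $\mu,\sigma\in\mathrm{Fh\text{-}I}(S)$ it computes $(\mu^{\ast'}\times\sigma^{\ast'})^{\ast}(x,y)$ explicitly, obtaining $\geq(\mu\times\sigma)(x,y)$ from the ideal property and $\leq(\mu\times\sigma)(x,y)$ using the strong left unity $[e,\delta]$; then for $\mu,\sigma\in\mathrm{Fh\text{-}I}(R)$ it computes $(\mu^{\ast}\times\sigma^{\ast})^{\ast'}$ the same way, using the right unity for the reverse inequality. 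Inclusion preservation is also shown by a direct infimum computation rather than by reducing to the factors.

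Your approach is more modular: it leverages Proposition~4.8 ($(\mu\times\sigma)^{\ast'}=\mu^{\ast'}\times\sigma^{\ast'}$), Theorem~\ref{Th:3.9}, and the preservation propositions already on record, so almost nothing new needs to be computed. The paper's approach is essentially a self-contained rerun of the Theorem~\ref{Th:3.9} calculation on $S\times S$ and $R\times R$; it avoids the need to recover factors from a cartesian product (and hence the appeal to $\mu(0)=1$), at the cost of repeating work. One minor point: your closing worry about injectivity in the prime/semiprime case is unnecessary---since the domain is parametrised by pairs $(\mu,\sigma)$ already assumed prime (semiprime), the recovered factors are those very $\mu,\sigma$, and no additional verification is needed there.
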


\begin{proof} Suppose that $\mu$ and $\sigma$ are fuzzy $h$-ideals of $S$ and $x,y\in S.$ Then
$$
\begin{array}{ll}
\left (\mu ^{\ast ^{^{\prime
}}}\times \sigma ^{\ast ^{^{\prime }}} \right )^{\ast
}(x,y)
&=\underset{\alpha ,\beta \in \Gamma }{\inf }\left \{ (\mu ^{\ast
^{^{\prime }}}\times \sigma ^{\ast ^{^{\prime }}})([\alpha
,x],[\beta ,y])\right \}\\
&=\underset{\alpha ,\beta \in \Gamma }{\inf }\left \{ \min
\left \{\mu ^{\ast ^{^{\prime }}}([\alpha ,x]), \sigma ^{\ast ^{^{\prime
}}}([\beta ,y])\right \}\right \} \\
&=\min \left \{\underset{\alpha \in \Gamma }{\inf }\left \{ \mu
^{\ast ^{^{\prime }}}([\alpha ,x])\right \} ,\underset{\beta \in \Gamma
}{\inf }\left \{ \sigma ^{\ast ^{^{\prime }}}([\beta ,y])\right \} \right \} \\

&=\min \left \{
\underset{\alpha \in \Gamma }{\inf}
\left \{
\underset{s_{1}\in S} {\inf}
\left \{
\mu (s_{1}\alpha x) \right \} \right \} ,
\underset{\beta \in \Gamma } {\inf}
\left \{
\underset{s_{2}\in S}
{\inf }
\left \{
\sigma (s_{2}\beta y)\right \} \right \} \right \} \\
&\geq  \min
\left \{\mu (x),\sigma (y)\right \}\\
&=(\mu \times \sigma )(x,y).
\end{array}
$$ Therefore, $\mu \times \sigma
\subseteq (\mu ^{\ast ^{^{\prime }}}\times \sigma ^{\ast
^{^{\prime }}})^{\ast }.$ Let $[e,\delta ]$ be the strong left unity of
$S.$ Then $e\delta x=x$ and $e\delta y=y$, for all $x,y\in S.$ Now, we have
$$
\begin{array}{ll}
(\mu \times \sigma )(x,y)=\min \{\mu (x),\sigma (y)\}& =\min \{\mu
(e\delta x),\sigma (e\delta y)\}\\

&\geq \min \left \{\underset{\alpha
\in\Gamma }{\inf }\left \{ \underset{s_{1}\in S}{\inf }\left \{ \mu (s_{1}\alpha
x)\right \} \right \} ,\underset{\beta \in \Gamma }{\inf }\left \{ \underset{s_{2}\in
S}{\inf }\left \{ \sigma (s_{2}\beta y)\right \} \right \} \right \}\\

&=\min \left \{\underset{\alpha \in
\Gamma }{\inf }\left \{ \mu ^{\ast ^{^{\prime }}}([\alpha
,x])\right \} ,\underset{\beta \in \Gamma }{\inf }\left \{ \sigma ^{\ast ^{^{\prime
}}}([\beta ,y])\right \}\right \} \\
& =\min \left \{(\mu ^{\ast ^{^{\prime }}})^{\ast
}(x),(\sigma ^{\ast ^{^{\prime }}})^{\ast }(y)\right \}\\
&=((\mu ^{\ast
^{^{\prime }}})^{\ast }\times (\sigma ^{\ast ^{^{\prime }}})^{\ast
})(x,y)\\
&=(\mu ^{\ast ^{^{\prime }}}\times \sigma ^{\ast ^{^{\prime
}}})^{\ast }(x,y).
\end{array}
$$ So, $\mu \times \sigma \supseteq (\mu ^{\ast
^{^{\prime }}}\times \sigma ^{\ast ^{^{\prime }}})^{\ast }.$ Hence,
$\mu \times \sigma =(\mu ^{\ast ^{^{\prime }}}\times \sigma ^{\ast
^{^{\prime }}})^{\ast }.$ Now, let $\mu $ and $\sigma $ be two
fuzzy $h$-ideals of $R.$ Then
 $$
 \begin{array}{l}
 (\mu ^{\ast
}\times \sigma ^{\ast })^{\ast ^{^{\prime }}}\left (\displaystyle{\sum_{i=1}^{n}}[\alpha_{i}
,x_{i}],\displaystyle{\sum_{j=1}^{m}}[\beta_{j}
,y_{j}]\right )
\\

=\underset{s_{i},s_{j}\in S}{\inf }\left \{ (\mu ^{\ast }\times \sigma
^{\ast })\left (\displaystyle{\sum_{i=1}^{n}}s_{i}\alpha_{i} x_{i},\displaystyle{\sum_{j=1}^{m}}s_{j}\beta_{j} y_{j}\right )\right \} \\

 =\underset{s_{i},s_{j}\in
S}{\inf }\left \{ \min \left \{\mu ^{\ast }\left (\displaystyle{\sum_{i=1}^{n}}s_{i}\alpha_{i} x_{i}\right ),\sigma ^{\ast
}\left (\displaystyle{\sum_{j=1}^{m}}s_{j}\beta_{j} y_{j}\right )\right \}\right \} \\

=\min \left \{\underset{s_{i}\in S}{\inf }\left \{ \mu ^{\ast }\left (\displaystyle{\sum_{i=1}^{n}}s_{i}\alpha_{i} x_{i}\right )\right \} ,\underset{s_{j}\in S}{\inf } \left \{ \sigma ^{\ast
}\left (\displaystyle{\sum_{j=1}^{m}}s_{j}\beta_{j} y_{j}\right )\right \}\right \} \\

 =\min \left \{\underset{s_{i}\in S}{\inf
}\left \{ \underset{\gamma \in \Gamma }{\inf }\left \{ \mu \left ([\gamma ,\displaystyle{\sum_{i=1}^{n}}s_{i}\alpha_{i}
x_{i}]\right )\right \} \right \} ,\underset{s_{j}\in S}{\inf }\left \{ \underset{\delta \in \Gamma
}{\inf }\left \{ \sigma \left ([\delta ,\displaystyle{\sum_{j=1}^{m}}s_{j}\beta_{j} y_{j}]\right )\right \} \right \} \right \} \\

=\min
 \left \{\underset{s_{i}\in S}{\inf }\left \{ \underset{\gamma \in \Gamma }{\inf
}\left \{ \mu \left (\displaystyle{\sum_{i=1}^{n}}[\gamma ,s_{i}]\displaystyle{\sum_{i=1}^{n}}[\alpha_{i}
,x_{i}]\right ) \right \} \right \} , \underset{s_{j}\in S}{\inf
}\left \{ \underset{\delta \in \Gamma }{\inf }\left \{ \sigma \left (\displaystyle{\sum_{j=1}^{m}}[\delta ,s_{j}]
\displaystyle{\sum_{j=1}^{m}}[\beta_{j} ,y_{j}]\right )\right \} \right \} \right \} \\
\geq \min \left \{\mu \left (\displaystyle{\sum_{i=1}^{n}}[\alpha_{i} ,x_{j}]\right ),\sigma \left (\displaystyle{\sum_{j=1}^{m}}[\beta_{j}
,y_{j}]\right )\right \}\\
=(\mu \times \sigma )\left (\displaystyle{\sum_{i=1}^{n}}[\alpha_{i} ,x_{i}],\displaystyle{\sum_{j=1}^{m}}[\beta_{j}
,y_{j}]\right ).
\end{array}
$$ Thus, we
obtain $(\mu ^{\ast }\times \sigma ^{\ast })^{\ast ^{^{\prime
}}}\supseteq \mu \times \sigma .$ Let $\displaystyle{\sum_{k=1}^{p}}[\gamma_{k},f_{k}]$ be the right
unity of $S.$ Then
$$
\begin{array}{l}
(\mu \times \sigma )\left (\displaystyle{\sum_{i=1}^{n}}[\alpha_{i}
,x_{i}],\displaystyle{\sum_{j=1}^{m}}[\beta_{j}
,y_{j}]\right )\\
=\min \left \{\mu \left (\displaystyle{\sum_{i=1}^{n}}[\alpha_{i} ,x_{i}]\right ),\sigma \left (\displaystyle{\sum_{j=1}^{m}}[\beta_{j}
,y_{j}]\right )\right \}\\

=\min \left \{\mu
\left (\displaystyle{\sum_{i=1}^{n}}[\alpha_{i} ,x_{i}]\displaystyle{\sum_{k=1}^{p}}[\gamma_{k},f_{k}]\right ), \sigma
\left (\displaystyle{\sum_{j=1}^{m}}[\beta_{j} ,y_{j}]\displaystyle{\sum_{k=1}^{p}}[\gamma_{k},f_{k}]\right )\right \}\\

\geq \min
 \left \{\underset{s_{i}\in S}{\inf }\left \{ \underset{\gamma \in \Gamma
}{\inf }\left \{ \mu \left (\displaystyle{\sum_{i=1}^{n}}[\alpha_{i} ,x_{i}]\displaystyle{\sum_{i=1}^{n}}[\gamma
,s_{i}]\right )\right \} \right \} ,\underset{s_{j}\in
S}{\inf }\left \{  \underset{\delta \in \Gamma }{\inf }\left \{ \sigma\left  (\displaystyle{\sum_{j=1}^{m}}[\beta_{j}
,y_{j}]\displaystyle{\sum_{j=1}^{m}}[\delta ,s_{j}]\right )\right \} \right \} \right \} \\

\geq \min \left \{(\mu ^{\ast })^{\ast ^{^{\prime
}}}\left (\displaystyle{\sum_{i=1}^{n}}[\alpha_{i} ,x_{i}]\right ),(\sigma ^{\ast })^{\ast ^{^{\prime
}}}\left (\displaystyle{\sum_{j=1}^{m}}[\beta_{j}
,y_{j}]\right )\right \}\\

=\left ((\mu ^{\ast })^{\ast ^{^{\prime }}}\times (\sigma ^{\ast
})^{\ast ^{^{\prime }}} \right )\left (\displaystyle{\sum_{i=1}^{n}}[\alpha_{i} ,x_{i}], \displaystyle{\sum_{j=1}^{m}}[\beta_{j}
,y_{j}]\right )\\=(\mu ^{\ast
}\times \sigma ^{\ast })^{\ast ^{^{\prime }}}\left (\displaystyle{\sum_{i=1}^{n}}[\alpha_{i}
,x_{i}],\displaystyle{\sum_{j=1}^{n}}[\beta_{j}
,y_{j}]\right ).
\end{array}
$$
Hence, $(\mu ^{\ast }\times \sigma ^{\ast })^{\ast
^{^{\prime }}}\subseteq \mu \times \sigma .$ Consequently, $(\mu
^{\ast }\times \sigma ^{\ast })^{\ast ^{^{\prime }}}=\mu \times
\sigma .$ Thus, we see that the correspondence $\mu \times \sigma
\longmapsto \mu ^{\ast ^{^{\prime }}}\times \sigma ^{\ast
^{^{\prime }}}$ is a bijection. Now, suppose that $\mu _{1},\mu _{2},\sigma
_{1},\sigma _{2}$ are fuzzy $h$-ideals of $S$ such that $\mu
_{1}\times \sigma _{1}\subseteq \mu_{2}\times \sigma _{2}.$ Then
$$
\begin{array}{l}
\left (\mu _{1}^{\ast ^{^{\prime }}}\times \sigma _{1}^{\ast ^{^{\prime
}}}\right )\left (\displaystyle{\sum_{i=1}^{n}}[\alpha_{i} ,x_{i}],\displaystyle{\sum_{j=1}^{m}}[\beta_{j} ,y_{j}]\right )\\

=\underset{s_{i},s_{j}\in S}{\inf
}\left \{ (\mu _{1}\times \sigma _{1})\left (\displaystyle{\sum_{i=1}^{n}}s_{i}\alpha_{i}
x_{i},\displaystyle{\sum_{j=1}^{m}}s_{j}\beta_{j}
y_{j}\right )\right \} \\

\leq\underset{s_{i},s_{j}\in S}{\inf
}\left \{ (\mu _{2}\times \sigma _{2})\left (\displaystyle{\sum_{i=1}^{n}}s_{i}\alpha_{i}
x_{i},\displaystyle{\sum_{j=1}^{m}}s_{j}\beta_{j}
y_{j}\right )\right \} \\

=\underset{s_{i},s_{j}\in S}{\inf }\left \{ \min \left \{\mu _{2}\left (\displaystyle{\sum_{i=1}^{n}}s_{i}\alpha_{i}
x_{i} \right ),\sigma _{2}\left (\displaystyle{\sum_{j=1}^{m}}s_{j}\beta_{j} y_{j}\right )\right \} \right \} \\

 =\min \left \{\underset{s_{i}\in S}{\inf
}\left \{ \mu _{2}\left (\displaystyle{\sum_{i=1}^{n}}s_{i}\alpha_{i} x_{i}\right )\right \} ,\underset{s_{j}\in S}{\inf }\left \{  \sigma
_{2}\left (\displaystyle{\sum_{j=1}^{m}}s_{j}\beta_{j} y_{j}\right )\right \} \right \} \\
 =\min \left \{\mu _{2}^{\ast ^{^{\prime }}}\left (\displaystyle{\sum_{i=1}^{n}}[\alpha_{i}
,x_{i}]\right ),\sigma _{2}^{\ast ^{^{\prime }}}\left (\displaystyle{\sum_{j=1}^{m}}[\beta_{j} ,y_{j}]\right )\right \}\\
 =\left (\mu_{2}^{\ast ^{^{\prime }}}\times \sigma _{2}^{\ast ^{^{\prime
}}}\right )\left (\displaystyle{\sum_{i=1}^{n}}[\alpha_{i} ,x_{i}],\displaystyle{\sum_{j=1}^{m}}[\beta_{j} ,y_{j}]\right ).
\end{array}
$$ Hence, $\mu
_{1}^{\ast ^{^{\prime
}}}\times \sigma _{1}^{\ast ^{^{\prime }}}\subseteq \mu _{2}^{\ast
^{^{\prime }}}\times \sigma _{2}^{\ast ^{^{\prime }}}.$ Therefore, $\mu
\times \sigma \longmapsto \mu ^{\ast ^{^{\prime }}}\times \sigma
^{\ast ^{^{\prime }}}$ is an inclusion preserving bijection.
Similarly, we can prove the results for prime fuzzy $h$-ideals and semiprime fuzzy $h$-ideals.
\end{proof}
\noindent \textbf{Conclusion:} As a continuation of this paper we will study the  correspondence of $h$-hemiregularity,
$h$-intra-hemiregularity etc in operator hemirings.
%%%%%%%%%%%%%%%%%%%%%%%%%%%%%%%%%%%%%%%%%%%%%%%%%%%%%%%%%%%%%%%%%%%%%%%%%%%%%%%%%%%%%%%%%%%%%%%%%%%%%%%%%%%%%%%%%%%%%%%%%%%%%%%%%%%%%%%%%%%%%%%%%%%%%

\end{document}